\newtheorem{thm}{Theorem}[section]
\newtheorem{prop}[thm]{Proposition}
\newtheorem{lem}[thm]{Lemma}
\newdefinition{rmk}{Remark}[section]
\newtheorem{defn}[thm]{Definition}
\newtheorem{fact}[thm]{Fact}
\newtheorem*{thm*}{Theorem}
\newtheorem*{fact*}{Fact}
\newtheorem*{defn*}{Definition}
\newcommand{\ie}{{\it i.e.\/}\ }
\newcommand{\cff}{{\it cf.}~}
\def\Q{{\mathbb Q}}
\def\C{{\mathbb C}}
\def\N{{\mathbb N}}
\def\Z{{\mathbb Z}}
\def\R{{\mathbb R}}
\def\P{{\mathbb P}}
\def\K{{\mathbb K}}
\def\F{{\mathbf F}}
\def\cU{{\mathcal U}}
\def\cP{{\mathcal P}}
\def\cR{{\mathcal R}}
\def\prr{\cP}
\def\cE{{\mathcal E}}
\def\dirac{\underline \delta}
\def\fourier{\F}
\begin{document}

\begin{frontmatter}

%% Title, authors and addresses

%% use the tnoteref command within \title for footnotes;
%% use the tnotetext command for theassociated footnote;
%% use the fnref command within \author or \address for footnotes;
%% use the fntext command for theassociated footnote;
%% use the corref command within \author for corresponding author footnotes;
%% use the cortext command for theassociated footnote;
%% use the ead command for the email address,
%% and the form \ead[url] for the home page:
%% \title{Title\tnoteref{label1}}
%% \tnotetext[label1]{}
%% \author{Name\corref{cor1}\fnref{label2}}
%% \ead{email address}
%% \ead[url]{home page}
%% \fntext[label2]{}
%% \cortext[cor1]{}
%% \address{Address\fnref{label3}}
%% \fntext[label3]{}

\title{Quasi-inner functions and local factors\tnoteref{t1}}
%\tnotetext[t1]{The authors are grateful to the referees for very helpful comments.}

%% use optional labels to link authors explicitly to addresses:
%% \author[label1,label2]{}
%% \address[label1]{}
%% \address[label2]{}

 \author[1,2,3]{Alain Connes\corref{cor1}}
\ead{alain@connes.org}
%\ead[url]{www.connes.org}

\author[2]{Caterina Consani\fnref{fn1}}
\ead{kc@math.jhu.edu}
%\ead[url]{www.math.jhu.edu/~kc}

\cortext[cor1]{Corresponding author}
\fntext[fn1]{Partially supported by Simons Foudation, Collaboration Grants for Mathematicians n. 691493}

\address[1]{Coll\`ege de France, 3 rue d'Ulm, Paris F-75005, France}
\address[2]{I.H.E.S., France}
\address[3]{Ohio State University, USA}
%\address[1]{I.H.E.S., France}
%\address[1]{Ohio State University, USA}
\address[2]{Johns Hopkins University, Baltimore (MD) 21218, USA}

%
%\ead[url]{math.jhu.edu/~kc}

\begin{abstract}
%% Text of abstract
We introduce the notion of {\it quasi-inner} function and show that the product $u=\rho_\infty\prod \rho_v$ of $m+1$ ratios of local {$L$-}factors {$\rho_v(z)=\gamma_v(z)/\gamma_v(1-z)$} over a finite set $F$ of places  of $\Q$  {inclusive of} the archimedean place is {quasi-inner}  on the left  of the critical line $\Re(z)= \frac 12$ in the following sense. The off diagonal part $u_{21}$ of the matrix of the multiplication  by $u$  in the orthogonal decomposition of the Hilbert space $L^2$ of square integrable functions on the critical line into the Hardy space {$H^2$} and its orthogonal complement is a compact operator.
When interpreted on  the unit disk, the quasi-inner condition means that the associated Haenkel matrix is compact. We show that none of the individual non-archimedean ratios  $\rho_v$ is quasi-inner and, in order to prove our main result we use Gauss multiplication theorem to factor the archimedean ratio $\rho_\infty$ into  a product of $m$ quasi-inner functions whose product with each $\rho_v$ retains the property to be quasi-inner. Finally we  prove that Sonin's space is simply the kernel of the diagonal part $u_{22}$ for the quasi-inner function $u=\rho_\infty$, and     when $u(F)=\prod_{v\in F} \rho_v$  the   kernels of the $u(F)_{22}$   form an inductive system of infinite dimensional spaces which  are the semi-local analogues of  (classical) Sonin's spaces.

\end{abstract}

%%Graphical abstract
%\begin{graphicalabstract}
%\includegraphics{grabs}
%\end{graphicalabstract}

%%Research highlights
%\begin{highlights}
%\item Research highlight 1
%\item Research highlight 2
%\end{highlights}

\begin{keyword}
%% keywords here, in the form: keyword \sep keyword
Semi-local\sep Trace formula\sep scaling\sep Hamiltonian\sep Weil positivity\sep Riemann zeta function\sep Sonin space

%% PACS codes here, in the form: \PACS code \sep code

%% MSC codes here, in the form: \MSC code \sep code
%% or \MSC[2008] code \sep code (2000 is the default)

\MSC[2008] 11M55\sep 11M06\sep 46L87\sep 58B34

\end{keyword}

\end{frontmatter}

%% \linenumbers

%% main text
\section{Introduction}
\label{}

 In \cite{scalingH} we proved that while the ratio of local $L$-factors  with their complex conjugates is a function of modulus one on the critical line $\Re(z)= \frac 12$ in the complex plane, it fails to be an inner function\footnote{and we explained that this failure invalidates an attempt by X.~J.~Li on RH}. In \cite{Weilcompo} we then obtained, in the case of the single archimedean place, a powerful inequality relating  Weil's functional as in the explicit formulas and the trace of the scaling action on  Sonin's space. A corollary of  these results  is that even though the ratio $\rho_\infty(z)$ of local factors pertaining to the archimedean place is not an inner function it satisfies the very closely related property  to be {\it quasi-inner} in the following sense

\begin{defn*}\label{defnquasiinner} Let $\Omega\subset \C$ be an open disk or a half-space. A function $u\in L^\infty(\partial \Omega)$ of modulus $1$ (\ie $u\bar u=1$) is said to be {\it quasi-inner} if the operator $(1-\prr)u\prr$ is compact, where $\prr$ is the orthogonal projection of  $L^2(\partial \Omega)$ on the Hardy space $H^2( \Omega)$ and $u$ acts on $L^2(\partial \Omega)$ by multiplication.	
\end{defn*}
Inner functions are quasi-inner since when  $u\in H^\infty(\Omega)$ then $(1-\prr)u\prr=0$. To be quasi-inner means that the matrix of the action of $u$  on the orthogonal decomposition $L^2(\partial \Omega)=H^2( \Omega)\oplus H^2( \Omega)^\perp$ is triangular modulo compact operators. It follows that the product of two quasi-inner functions is also quasi-inner.
When  $\Omega$ is the unit disk $\cU=\{z\in \C\mid \vert z\vert <1\}$   the operator $(1-\prr)u\prr$ is closely related to  Haenkel's operator $H_u$ with symbol $u$. Indeed, one has  $H_u=\prr Ju\prr$ with  
$Jf(e^{i\theta}):=f(e^{-i\theta})$,   $J1=1$ for the (normalized) constant function $1$ and the following  holds
\[
J\prr J=1-\prr+\vert 1\rangle\langle 1\vert~ \Longrightarrow~  (1-\prr)u\prr\sim JH_u \quad (\textrm{modulo finite rank operators}).
\]
Thus the compactness of $(1-\prr)u\prr$ is equivalent to that of $H_u$. The condition that a Haenkel operator $H_f$ (with symbol $f$) is compact is very well studied in \cite{MR} to which we refer for more details on this topic.   \newline
The results of \cite{Weilcompo} imply that the archimedean ratio $\rho_\infty$ is  a  quasi-inner function  on the critical line (viewed as  the boundary of the  half-plane $\Re(z)\le \frac 12$.
In the present paper we give, in Section \ref{sectomegainfty}, an independent direct  proof of this result.  We  then show that while the individual ratios $\rho_p$ of local factors  associated to rational primes  fail to be quasi-inner the products $u=\rho_\infty\prod \rho_p$  over a finite set of places of $\Q$ inclusive of the archimedean place is a  quasi-inner  function. This result is meant to be a first test pertaining to the general strategy proposed in \cite{Weilcompo} to prove Weil's positivity using the semi-local trace formula of \cite{Co-zeta}. The local $L$-factors and their ratios are defined as follows
\begin{equation}\label{localrat}
	\gamma_\infty(z):=\pi^{-z/2}\Gamma\left(\frac z2\right), \ \gamma_p(z):=\left(1-p^{-z}\right)^{-1}, \ \rho_*(z):=\gamma_*(z)/\gamma_*(1-z).
\end{equation}
Our main result is the following
\begin{thm*}\label{thmquasiinnerintro} The product $u=\rho_\infty\prod \rho_p$ of $m+1$ ratios of local factors  over a finite set of places of $\Q$ containing the archimedean place is a quasi-inner function relative to $\C_-=\{z\in\C\mid \Re(z)\leq \frac 12\}$.
\end{thm*}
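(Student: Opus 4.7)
The plan is to reduce the theorem to the product-closure of the quasi-inner class: if $u$ and $v$ are quasi-inner then so is $uv$, since
$$(1-\prr)M_{uv}\prr = (1-\prr)M_u \prr\, M_v \prr + (1-\prr)M_u(1-\prr)\, M_v \prr,$$
and both summands factor through a compact off-diagonal block. One would like to apply this directly to the factorization $u = \rho_\infty \prod_p \rho_p$, but each $\rho_p$ is known not to be quasi-inner, so the naive split fails. Instead, following the strategy announced in the abstract, I would \emph{borrow} quasi-inner material from $\rho_\infty$ itself: factor $\rho_\infty = \sigma_0\sigma_1\cdots\sigma_{m-1}$ into $m$ quasi-inner pieces and pair each $\sigma_k$ with a distinct $\rho_{p_k}$ in such a way that the combined symbol $\sigma_k\rho_{p_k}$ remains quasi-inner. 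Then $u$ is the product of the $m$ quasi-inner symbols $\sigma_k\rho_{p_k}$, and the product-closure concludes.

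The factorization of $\rho_\infty$ comes from Gauss's multiplication theorem applied to $\Gamma(z/2)$: substituting $w = z/2$ into
$$\Gamma(w) = (2\pi)^{(1-m)/2}\, m^{w-1/2} \prod_{k=0}^{m-1}\Gamma\!\left(\frac{w+k}{m}\right)$$
and forming $\gamma_\infty(z)/\gamma_\infty(1-z)$ cancels the constant $(2\pi)^{(1-m)/2}$ and yields
$$\rho_\infty(z) = \left(\frac{m}{\pi}\right)^{z-1/2} \prod_{k=0}^{m-1} \frac{\Gamma((z+2k)/(2m))}{\Gamma((1-z+2k)/(2m))}.$$
Distributing the prefactor $(m/\pi)^{z-1/2}=\prod_k (m/\pi)^{(z-1/2)/m}$ evenly among the $m$ terms, I define
$$\sigma_k(z) := \left(\frac{m}{\pi}\right)^{(z-1/2)/m}\cdot\frac{\Gamma((z+2k)/(2m))}{\Gamma((1-z+2k)/(2m))},$$
which is of modulus one on the critical line and structurally a ``scaled-archimedean'' ratio in the variable $(z+2k)/m$. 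Each $\sigma_k$ should be quasi-inner on its own by the same direct Stirling-based argument that handles $\rho_\infty$ in Section \ref{sectomegainfty}, rescaled by the factor $1/m$.

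The hard step, and the main obstacle, is to show that for a suitable bijection $p\mapsto k(p)$ between the $m$ primes and $\{0,\ldots,m-1\}$ the product $\sigma_{k(p)}\rho_p$ is quasi-inner relative to $\C_-$. Transferring to the unit disk via a Cayley transform (as indicated in the introduction), this amounts to compactness of the Hankel operator with symbol $\sigma_{k(p)}\rho_p$. The difficulty is genuine: on the critical line $\rho_p(\tfrac12+it)=(1-p^{-1/2}e^{it\log p})/(1-p^{-1/2}e^{-it\log p})$ is periodic with period $2\pi/\log p$, so its Fourier content does not decay and its Hankel operator alone is not compact. The role of the Gauss rescaling is precisely to endow $\sigma_k$ with a Stirling-type spreading phase of order $\tfrac{t}{m}\log t$ which, when multiplied by the periodic symbol $\rho_p$, is expected to produce a function belonging to $H^\infty + C$ on the boundary circle; the Hartman criterion cited in \cite{MR} then provides the needed compactness. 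Once this quasi-innerness of $\sigma_{k(p)}\rho_p$ is established, one assembles
$$u = \rho_\infty\prod_p \rho_p = \prod_p\bigl(\sigma_{k(p)}\rho_p\bigr)$$
as a product of $m$ quasi-inner functions, and the product-closure property proves the theorem.
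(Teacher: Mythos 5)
Your proposal recovers the architecture of the paper's proof exactly: factor $\rho_\infty$ via Gauss's multiplication theorem into $m$ quasi-inner pieces (your $\sigma_k$ coincides, up to notation, with the $\rho_\infty^{(m,k)}$ of Lemma \ref{uinftyfactorm}), pair each piece with a prime, and finish with the product stability of the quasi-inner class. Where the proposal does not yet constitute a proof is exactly the step you yourself flag as hard, the quasi-innerness of $\sigma_k\rho_p$, and the mechanism you offer there is the wrong one. You attribute the compactness to a ``Stirling-type spreading phase of order $\tfrac tm\log t$''. Phase oscillation of a modulus-one symbol does not, by itself, place a function in $H^\infty+C$, and multiplying by a periodic factor does not help: chirp-type oscillations survive and do not produce decaying Fourier coefficients.

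The ingredient the paper actually isolates (announced in the introduction as the crux, and supplied in \S\ref{sectgam}) is the polynomial \emph{decay of the modulus} of the archimedean piece on the imaginary axis: Lemma \ref{uinftyfactor}(i) gives $|\phi_{m,k}(it)|=O(|t|^{-1/(2m)})$. This comes out of Stirling because in the ratio $\Gamma\!\left(a+\tfrac{it}{2m}\right)/\Gamma\!\left(b-\tfrac{it}{2m}\right)$ the dominant factor $e^{-\pi|t|/(4m)}$ and the oscillatory $t\log t$ phase cancel between numerator and denominator, and what survives is the algebraic factor $|t|^{a-b}=|t|^{-1/(2m)}$. Evaluated at the poles $z=\tfrac{2\pi in}{\log p}$ of $\rho_p$, this decay makes the residue-series representation of $(1-\prr)\,\sigma_k\rho_p\,\prr$ a norm-convergent sum of rank-one operators (Theorem \ref{thmquasiinner} and Proposition \ref{propsum1} for $m=1$, Theorem \ref{thmmainbis} in general), yielding an infinitesimal of order $\tfrac1{2m}$ and hence compactness. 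Replace the phase heuristic by this modulus estimate and carry through the residue computation of \S\ref{sectonep}, and your outline becomes the paper's proof.
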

Throughout the paper we use the invariance of the quantized calculus \cite{Co-book} under conformal transformations to switch back and forth from the unit disk $\cU=\{v\in \C \mid \vert v\vert \leq 1\}$ to the half plane $\C_-$,  by implementing the  conformal transformation $\psi(v)=\frac 12+\frac{v+1}{v-1}$ {($\psi(-1)=\frac 12$, $\psi(0)=-\frac 12$, $\psi(1)=-\infty$)  and its inverse  $\psi^{-1}(z)=\frac{2 z+1}{2 z-3}$. 
  At the level of the Hilbert spaces  the unitary operator
 \[
 U:L^2(S^1)\to L^2(\partial \C_-), \qquad  (U\xi)(z):=\frac{\pi^{-1/2}}{z-3/2}\ \xi(\psi^{-1}(z))
 \]
 transforms an element of the Hardy space $H^2(\cU)$ into a holomorphic function  in $\C_-$  whose restriction to the critical line is square integrable, and it conjugates the corresponding operators $\prr$.\newline
   Theorem \ref{thmquasiinner0}  in Section \ref{sectomegainfty} shows that the  archimedean ratio $\rho_\infty(z)$  is a quasi inner function relative to $\C_-$ and  Theorem \ref{thmkappa} provides an explicit formula for the operator $(1-\prr)\kappa\prr$, (where $\kappa=\rho_\infty\circ \psi$) as  a fast convergent series of rank one operators. Proposition \ref{propsum} then gives   an independent proof of Theorem \ref{thmquasiinner0}   showing  that $\kappa$ belongs to $C^\infty(S^1)+H^\infty(\cU)$. \newline
  In Section \ref{sectsinglep} we consider a non-archimedean ratio $\rho_p$ for a }rational prime $p$, and give in Lemma \ref{functionf2descr} an explicit formula for the operator $(1-\prr)\kappa_p\prr$, where $\kappa_p=\rho_p\circ \psi$. Then we explain why $\rho_p$ fails to be quasi-inner (Fact \ref{factnq}). Using the poles of $\kappa_p$, we associate to each prime a Blaschke product $B_p\in H^\infty(\cU)$ and prove, in  Proposition  \ref{Blaschke}, that the product $\kappa_pB_p$ is the inner function  $-p^{\frac {v+1}{v-1}}$ and that the kernel of $(1-\prr   )\kappa_p \prr   $ is the shift invariant subspace of $H^2(\cU)$ of multiples of $B_p$. \newline
  Section \ref{sectmain} is devoted to the proof of the main Theorem. This result rests crucially on the decay of $\rho_\infty$ on the imaginary line: we provide the needed estimates in \S \ref{sectgam}. We first prove in   Theorem \ref{thmquasiinner} that the product  $\rho_\infty \rho_p$ is quasi inner relative to $\C_-$, and give an explicit formula for $(1-\prr   )\kappa \kappa_p \prr$ as a sum of  three terms $   \cE_\infty+\cE_p+\cE_0$ corresponding to the poles of $\rho_\infty \rho_p$. Moreover, Proposition \ref{propsum1}  gives a decomposition of $\kappa \kappa_p $ as a sum of a function in $H^\infty(\cU)$  and a function  continuous on $S^1$ (in fact  smooth outside the  point $1\in S^1$. In \S \ref{sectfactor} we 
use Gauss multiplication theorem to factor $\rho_\infty$  into a product of $m$ quasi-inner functions whose product with each $\rho_p$ is still quasi-inner: this is the fundamental step needed to complete  the proof of the main Theorem. \newline
In Section \ref{quasi-inner sonin} we focus on the main result of \cite{Weilcompo} which relates the positivity of   Weil's functional and the trace of the scaling action compressed on Sonin's space. By construction a quasi-inner function $U$ is such that when working modulo compact operators (\ie in the Calkin algebra) the $2 \times 2$ matrix representing $U$ is triangular. Moreover, the  structure of triangular unitaries  is governed by the kernel of the matrix element $U_{22}$ (see \S \ref{triangunit}). The first key result   is that Sonin's space is canonically isomorphic to the kernel of the matrix element $(\rho_\infty)_{22}$. This suggests to associate to any quasi-inner function $U$ an analogue of Sonin's space as the kernel of $U_{22}$. In the present paper we take this as a definition  postponing to a future   geometric paper  the proof   that when  this definition is applied to  the product $u=\rho_\infty\prod \rho_p$ of $m+1$ ratios of local factors  over a finite set of places of $\Q$ containing the archimedean place it gives the straight-forward analogue of Sonin's space in the semi-local framework of \cite{Co-zeta}.\newline
The second main result of this paper is  the following
\begin{thm*}\label{mainsoninintro} $(i)$~Let $F$ be a finite set of places of $\Q$ containing the archimedean place,  $u(F)=\prod_F \rho_v$ the associated product of ratios of local factors over $F$. Then the Sonin space $S(u(F))$ is infinite dimensional.\newline
$(ii)$~Let $F\subset F'$ with $F,F'$ as in $(i)$. The multiplication by $D(F,F') =\prod_{p\in F'\setminus F} (1-p^{-z})$ defines an injective linear map $S(u(F))\to S(u(F'))$. 	
\end{thm*}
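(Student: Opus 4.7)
The starting observation is that for a quasi-inner $u$, the diagonal block $u_{22}=(1-\prr)\,u\,(1-\prr)$ has kernel
\[
S(u)=\{g\in H^{2,\perp}(\C_-)\ :\ ug\in H^2(\C_-)\},
\]
so the whole question reduces to an $H^2$ versus $H^{2,\perp}$ bookkeeping problem for pointwise products on the critical line. The plan is to prove $(ii)$ first, and then deduce $(i)$ by pulling back from the classical case through the injection of $(ii)$.

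For $(ii)$, the strategy is to split $D(F,F')$ cleanly between the two Hardy spaces. On the complementary half-plane $\C_+=\{\Re z\ge \tfrac12\}$ one has $|p^{-z}|=p^{-\Re z}\le p^{-1/2}$, so every factor $1-p^{-z}$ lies in $H^\infty(\C_+)$. Under the identification $H^{2,\perp}(\C_-)=H^2(\C_+)$, multiplication by $D(F,F')$ then preserves $H^{2,\perp}(\C_-)$. Writing $\rho_p(z)=(1-p^{z-1})/(1-p^{-z})$ produces the crucial cancellation
\[
u(F')\,D(F,F')\;=\;u(F)\,\prod_{p\in F'\setminus F}(1-p^{z-1}),
\]
and since $|p^{z-1}|\le p^{-1/2}$ on $\C_-$, the trailing product lies in $H^\infty(\C_-)$. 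Consequently, for $g\in S(u(F))$ the function $u(F)\,g$ already sits in $H^2(\C_-)$, and an $H^\infty(\C_-)$ multiplier keeps it there; combined with the previous sentence this gives $D(F,F')\,g\in S(u(F'))$. Injectivity is immediate from the lower bound $|D(F,F')|\ge\prod_{p\in F'\setminus F}(1-p^{-1/2})>0$ on the critical line, which makes the multiplication operator have trivial kernel already on $L^2$.

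Part $(i)$ then follows by taking $F_0=\{\infty\}$. The identification recalled in this section gives $S(u(F_0))=S(\rho_\infty)$, which is canonically the classical Sonin space and is well known to be infinite dimensional. Applying $(ii)$ to the inclusion $F_0\subset F$ yields an injection $S(\rho_\infty)\hookrightarrow S(u(F))$, whence $S(u(F))$ is infinite dimensional as well.

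The main obstacle is avoiding a side confusion in the Hardy-space setup for $\C_-$ versus $\C_+$, and keeping careful track of which elementary factor is holomorphic and bounded on which half-plane; once $\rho_p$ is factored as $(1-p^{z-1})(1-p^{-z})^{-1}$, the cancellation is algebraically transparent and the choice of multiplier $D(F,F')$ is essentially forced on us. The only external input used is Sonin's classical infinite-dimensionality result for $S(\rho_\infty)$.
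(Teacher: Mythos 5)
Your proposal is correct and follows essentially the same route as the paper: you reduce $(i)$ to $(ii)$ by injecting $S(\rho_\infty)$ (identified with classical Sonin space via Proposition~\ref{uinftysonin}) into $S(u(F))$, and for $(ii)$ you use the same factorization $\rho_p=(1-p^{z-1})(1-p^{-z})^{-1}$ and the observation (Lemma~\ref{compat}) that $\prod(1-p^{z-1})\in H^\infty(\C_-)$ while $\prod(1-p^{-z})\in H^\infty(\C_+)$, producing the same cancellation $u(F')D(F,F')=u(F)\prod_{F'\setminus F}(1-p^{z-1})$. You add the (trivial but omitted in the paper) remark that injectivity follows from the pointwise lower bound $|D(F,F')|\ge\prod(1-p^{-1/2})>0$ on the critical line.
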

 This shows that   semi-local Sonin's spaces form a filtering system of infinite dimensional spaces.

\section{The function $\rho_\infty$ is quasi-inner}\label{sectomegainfty}

In this section we show that the function
\begin{equation}\label{archimfact}
\rho_\infty(z):=\frac{\pi^{-z/2}\Gamma(z/2)}{\pi^{-(1-z)/2}\Gamma((1-z)/2)}
\end{equation}
is quasi-inner relative to the  left half-plane $\C_-=\{z\in\C\mid \Re(z)\leq \frac 12\}$ with boundary the critical line $\Re(z)= \frac 12$. 
The function $\rho_\infty$ fulfills the functional equation 
\begin{equation}\label{archimfact1}
	\rho_\infty(z+2)=-(2 \pi )^{-2}z(z+1)\,\rho_\infty(z).
\end{equation}
This can be seen by implementing the equality
\begin{equation}\label{archimfact2}
\rho_\infty(z)=2 \cos \left(\frac{\pi  z}{2}\right)(2 \pi )^{-z}\Gamma(z).
\end{equation}
which is proven using the duplication and complement 
formulas.

 Next, we compute the Fourier coefficients $a_{-k}$, $k>0$ of the  function $\kappa=\rho_\infty\circ \psi$  ($\psi(v)$ is the conformal transformation mapping the unit disk to $\C_{-}$)
\[
\kappa(v):=\rho_\infty\left(\frac 12+\frac{v+1}{v-1}\right)
\]
With $S^1$ the positively oriented circle, one has 
\[
a_{-k}:=\frac{1}{2\pi}\int \kappa(\exp(i\theta))\exp(ik\theta)d\theta
=\frac{1}{2\pi i}\int_{S^1} \kappa(v)v^{k-1}dv.
\]
Changing variables and implementing the differential $d\psi^{-1}(z)=-\frac{8}{(2 z-3)^2}dz$ gives 
\[
a_{-k}=\frac{1}{2\pi i}\int_{\partial \C_- } \rho_\infty(z)\psi^{-1}(z)^{k-1}d\psi^{-1}(z)
=-\frac{8}{2\pi i}\int_{\partial \C_- } \rho_\infty(z)\left(\frac{2 z+1}{2 z-3}\right)^{k-1}
(2 z-3)^{-2}dz.
\]
 The function $\rho_\infty(z)\, \psi^{-1}(z)^{k-1}$ is of modulus one on the boundary $\partial \C_-$. We apply Cauchy's residue theorem to compute the integral above for $a_{-k}$. Due to the term $(2 z-3)^{-2}$ the  integral is convergent and is the limit, for $R\to  \infty$, of the following integrals  where
 \[
 I(R):=-\frac{8}{2\pi i}\int_{\frac 12-i R}^{\frac 12+i R}\omega_\infty(z)\left(\frac{2 z+1}{2 z-3}\right)^{k-1}
(2 z-3)^{-2}dz.
 \]
 Consider, for $m\in \N$  the closed positively oriented path $C_{R,m}$ in the complex plane formed by the segments joining the following points (see Figure \ref{cauchy1})
 \[
 \frac 12-i R, \quad \frac 12+i R, \quad \frac 12-2m+i R, \quad \frac 12-2m-i R, \quad  \frac 12-i R.
 \]
 The function $\left(\psi^{-1}(z)\right)^{k-1}=\left(\frac{2 z+1}{2 z-3}\right)^{k-1}$ is, by construction, of modulus $\leq 1$ in $\C_-$. The function $\vert\rho_\infty(z)\vert$ decays very fast  on the segment joining $\frac 12+i R$ and $\frac 12-2m+i R$, but we  only use that it is 
  of modulus $\leq 1$, for $z\in \C_-$ with $\Im(z)>7$. This can be verified  by first looking at the boundary values in the infinite rectangle with  side the segment $\left(-\frac 32+7i,\frac 12+7i\right) $ and then using \eqref{archimfact1}. For  $R>7$ one controls the integral on the segment $(\frac 12+i R, \ \frac 12-2m+i R)$ since 
 \[
 \int_{-\infty}^{\frac 12} \vert 2t -3+2 i R\vert^{-2}dt\leq \frac 14 \int_{0}^{\infty}\left(u^2+R^2 \right)^{-1}du=\frac {\pi}{ 8 R}.
 \]
 On the segment $V=(\frac 12-2m+i R, \ \frac 12-2m-i R)$, one can use \eqref{archimfact1} together with $\vert\rho_\infty(z)\vert=1$ for $z\in \partial\C_-$
 to obtain 
 \[
 \vert\rho_\infty(z)\vert (2 \pi )^{-2m}\prod_{k=0}^{2m-1} \vert z+k\vert \leq 1, \qquad \forall z\in V
 \]
 which  in turns gives
 \[
  \vert\rho_\infty(z)\vert\leq (2 \pi )^{2m}\prod_{k=0}^{2m-1}\left(2m-\frac 12 -k\right)^{-1}=\epsilon(m),  \qquad \forall z\in V
 \]
 with the sequence $\epsilon(m)$ tending to zero very fast for $m\to \infty$.  This provides a good control of the integral on $V$ by 
  \[
  \epsilon(m) \int_V \vert 2z -3\vert^{-2}\vert dz\vert =\frac 14\epsilon(m) \int_{-R}^R \left( (2m+1)^2+y^2\right)^{-2} dy\leq \epsilon(m).
  \]
   Thus we obtain 
   \[
   a_{-k}=-\frac{8}{2\pi i}\ \lim_{\substack{R\to \infty\\ m\to \infty}}\int_{C_{R,m}}\rho_\infty(z)\left(\frac{2 z+1}{2 z-3}\right)^{k-1}
(2 z-3)^{-2}dz
   \]
   with an error term of the form $\epsilon(m)+O(1/R)$.
    Then we apply Cauchy's residue theorem.   The integrand  
   has simple poles at the points $-2n$ for $n\in \N$  and one has 
\[
\Gamma(z/2)=\frac{2 (-1)^n}{\Gamma (n+1)}\, \frac{1}{z+2n}+O(1), \quad z\to -2n.
\]
Thus one obtains 
\[
\rho_\infty(z)=(-1)^n\frac{2  \pi ^{2 n+\frac 12} }{\Gamma (n+1) \Gamma \left(n+\frac{1}{2}\right)}\frac{1}{z+2n}+O(1), \quad z\to -2n
\]
 \begin{figure}[H]	\begin{center}
\includegraphics[scale=0.45]{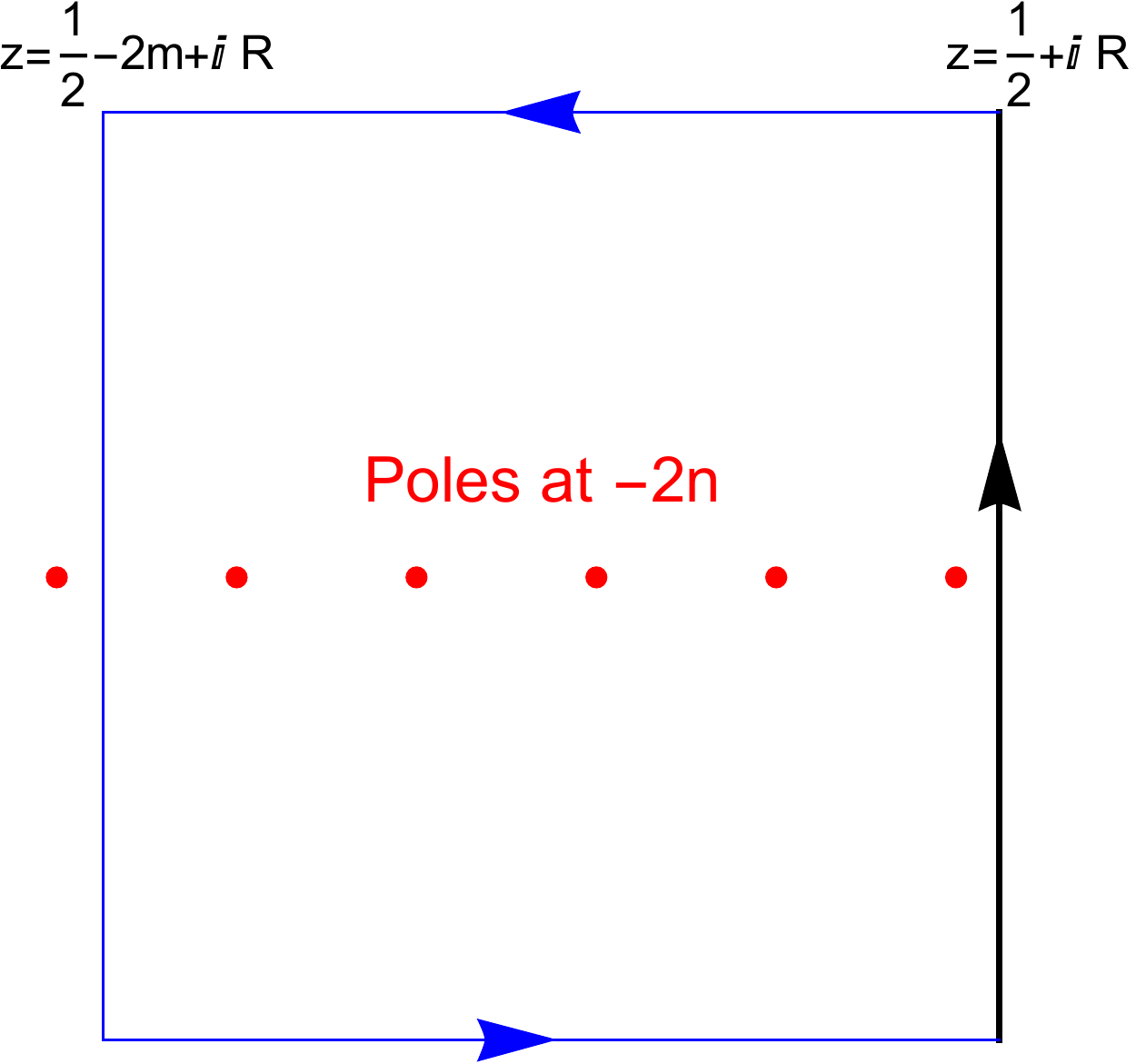}
\end{center}
\caption{The path of integration  $C_{R,m}$ \label{cauchy1} }
\end{figure}
The residue formula  gives
\[
a_{-k}=-8\sum_\N \textrm{Res}_{z=-2n}\left( \rho_\infty(z)\left(\frac{2 z+1}{2 z-3}\right)^{k-1} (2 z-3)^{-2}\right)
\]
and bringing up the above estimates one finally obtains
\begin{equation}\label{aminusk}
a_{-k}=\sum_\N (-1)^{n+1}\frac{16  \pi ^{2 n+\frac 12} (4 n+3)^{-2}}{\Gamma (n+1) \Gamma \left(n+\frac{1}{2}\right)}\left(1-\frac{4}{4 n+3}\right)^{k-1}.
\end{equation}
This  is an expression of the form 
$
a_{-k}=\sum_\N \alpha(n)x(n)^{k-1}
$,
where the series $\alpha(n)$ is summable and tends to $0$ extremely fast, while the $x(n)$ increase to $1$ with $1-x(n)\sim 1/n$.  We are now ready to state the following
\begin{thm} \label{thmquasiinner0} The function $\rho_\infty$  is  quasi-inner  relative to $\C_-=\{z\mid \Re(z)\leq \frac 12\}$. The operator $(1-\prr  )\rho_\infty \prr  $ is an infinitesimal of infinite order.
\end{thm}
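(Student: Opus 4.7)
The plan is to transfer the problem from $\C_-$ to the unit disk via the conformal map $\psi$, since the unitary $U$ conjugates the projections $\prr$ and multiplication operators. Thus it suffices to show $T:=(1-\prr)\kappa\prr$ is compact (and infinitesimal of infinite order) on $L^2(S^1)$, where $\kappa=\rho_\infty\circ\psi$.

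First, I would identify $T$ with a Hankel-type matrix. In the orthonormal basis $\{v^n\}_{n\in\Z}$, $T$ maps $v^j$ ($j\geq 0$) into the negative-frequency part of $\kappa v^j$, so its matrix has entries $\langle v^{-l-1},\,Tv^j\rangle=a_{-(l+j+1)}$ for $l,j\geq 0$. Hence $T$ is encoded by the sequence of negative Fourier coefficients $a_{-k}$ already computed in \eqref{aminusk}, namely
\[
a_{-k}=\sum_{n\in\N}\alpha(n)\,x(n)^{k-1},\qquad \alpha(n)=(-1)^{n+1}\frac{16\pi^{2n+1/2}(4n+3)^{-2}}{\Gamma(n+1)\Gamma(n+\tfrac12)},\qquad x(n)=1-\frac{4}{4n+3}.
\]

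Next, I would exploit that a geometric symbol produces a rank-one Hankel operator: the matrix $(x^{l+j})_{l,j\geq 0}$ is $v_x\otimes\bar v_x$ with $v_x=\sum_l x^l\,e_l$, so its operator norm is $1/(1-x^2)$ for $0<x<1$. Writing the Hankel-symbol sequence as $a_{-(l+j+1)}=\sum_n\alpha(n)\,x(n)^{l+j}$ yields the decomposition
\[
T=\sum_{n\in\N}\alpha(n)\,T_n,\qquad \|T_n\|=\frac{1}{1-x(n)^2}\sim\frac{4n+3}{8},
\]
as a sum of rank-one operators. The factor $1/(\Gamma(n+1)\Gamma(n+\tfrac12))$ annihilates the geometric growth $\pi^{2n}$ in $\alpha(n)$, making $n\,|\alpha(n)|$ decay faster than any geometric sequence; consequently the series converges absolutely in operator norm, which proves compactness and hence quasi-innerness.

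For the infinitesimal-of-infinite-order claim, I would use the truncation $T_{(N)}:=\sum_{n<N}\alpha(n)T_n$, which has rank at most $N$. The min-max principle then gives the bound
\[
\mu_N(T)\leq \|T-T_{(N)}\|\leq \sum_{n\geq N}|\alpha(n)|\,\|T_n\|,
\]
and the tail decays faster than any polynomial in $N$ because of the double Gamma factor. Hence $\mu_N(T)=O(N^{-k})$ for every $k$, so $T$ is an infinitesimal of infinite order. The main technical point is the rank-one identification together with a clean estimate on $\|T_n\|(1-x(n)^2)$; the arithmetic control of $\alpha(n)$ is routine once the representation of $a_{-k}$ from the residue computation is in hand.
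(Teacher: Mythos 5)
Your proposal is correct, and it takes a genuinely different (and in some ways cleaner) route than the paper's proof.

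The paper's own proof truncates in the \emph{Fourier} index $k$: it writes $(1-\prr)\kappa\prr=\sum_{k\ge 1}a_{-k}(1-\prr)e^{-ik\theta}\prr$, where each summand has norm $1$ and rank $k$, and deduces $\mu_m(T)\le\sum_{k>m}|a_{-k}|$ by restricting to vectors orthogonal to $e^{i\ell\theta}$, $\ell<m$. To make this useful the paper must first prove that $a_{-k}$ is of rapid decay, which it does via a separate majorization $|a_{-k}|\le M(k)=C\sum_n 2^{-n}(1-1/n)^k$ and the optimization $M(k)/C\le(1-1/m)^k+2^{-m}$ with $m\sim\sqrt k$, giving $M(k)=O(2^{-\sqrt k})$.

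You instead truncate in the \emph{pole} index $n$: you decompose $T=\sum_n\alpha(n)T_n$ into rank-one operators $T_n=(1-\prr)f_{x(n)}\prr$ (the same decomposition the paper records later in Theorem \ref{thmkappa}) and bound $\mu_N(T)\le\sum_{n\ge N}|\alpha(n)|\|T_n\|$ by the approximation-number characterization. Your norm formula $\|T_n\|=1/(1-x(n)^2)=(4n+3)^2/(32n+8)$ is exactly the content of Lemma \ref{uinftyqi1}, and the $(4n+3)^{\pm 2}$ factors cancel so that $|\alpha(n)|\|T_n\|=\frac{2\pi^{2n+1/2}}{(4n+1)\Gamma(n+1)\Gamma(n+\frac12)}$ --- the very coefficients in \eqref{uinftyoff1}, which decay super-exponentially in $n$. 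The tail estimate is then immediate, and you never need the $M(k)$ argument or the $m\sim\sqrt k$ optimization. The trade-off is that the paper's route stays at the level of the Fourier coefficients $a_{-k}$ and proves a statement of independent interest (rapid decay of $a_{-k}$, i.e.\ smoothness of the Hankel symbol modulo $H^\infty$, as later made precise in Proposition \ref{propsum}), while your route anticipates the explicit rank-one expansion of Theorem \ref{thmkappa} and gets a cleaner singular-value bound essentially for free.
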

\begin{proof}  It suffices to show that the sequence $(a_{-k})$ as in \eqref{aminusk} is of rapid decay. Granted this, the operator $(1-\prr  )\rho_\infty \prr  $ is, up to a rank one operator, a Haenkel operator with smooth symbol. More explicitly one has 
\[
(1-\prr  )\rho_\infty \prr  =\sum_{k=1}^\infty a_{-k}(1-\prr  )e^{-ik\theta} \prr  
\]
where the operator  $(1-\prr  )e^{-ik\theta} \prr  $ is of norm $1$ and rank $k$ so that the characteristic values of the sum $\mu_m((1-\prr  )e^{-ik\theta} \prr  )$  are of rapid decay. In fact   imposing  $m$ linear conditions of orthogonality with the vectors $e^{i\ell \theta}$, for $0\leq \ell \leq m-1$, reduces the sum to
\[
\sum_{k=m}^\infty a_{-k}(1-\prr  )e^{-ik\theta} \prr  
\] 
whose norm is less than $\sum_m^\infty \vert a_{-k}\vert$ which is  $O(m^{-N})$ for any $N$. 
To  show that the sequence $(a_{-k})$ as in \eqref{aminusk} is of rapid decay one can major it by 
\[
M(k)=C \sum_{n=1}^\infty 2^{-n}\left(1-\frac 1n\right)^k
\]
for some constant $C<\infty$. Then,  for any integer $m$ one obtains the inequality 
\[
M(k)/C\leq  \left(1-\frac 1m\right)^k+2^{-m}
\]
which, for $m\sim \sqrt{k}$, shows that $M(k)$ tends to $0$ at least as $2^{-\sqrt{k}}$ and hence faster than any inverse polynomial in $k$.\end{proof} 

 We shall now  provide an explicit formula for the operator $(1-\prr  )\rho_\infty \prr  $ as a very fast convergent sum of rank one operators.  The following preliminary lemma is needed 
 
\begin{lem}\label{uinftyqi1} The off diagonal part $(1-\prr  )f_x \prr  $ associated to the multiplication operator in $L^2(S^1)$ by the function $f_x(z):=z^{-1}(1-xz^{-1})^{-1}$ with $\vert x\vert <1$, is the rank one operator 
	\begin{equation}\label{rankone}
		(1-\prr  )f_x \prr  = \vert \xi_x\rangle \langle \eta_x\vert,  \qquad \xi_x=z^{-1}(1-xz^{-1})^{-1},\quad \eta_x=(1-\overline{x}z)^{-1}.
	\end{equation}
	One has 
\[
\Vert \xi_x \Vert^2=\frac {1}{1-\vert x\vert^2}, \qquad \Vert \eta_x \Vert^2=\frac {1}{1-\vert x\vert^2}, \qquad \Vert \xi_x \Vert \Vert \eta_x \Vert=\frac {1}{1-\vert x\vert^2}.
\]
\end{lem}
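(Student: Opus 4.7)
The plan is to verify the identity by direct computation on the orthonormal Fourier basis $\{e_n(z)=z^n\}_{n\in\Z}$ of $L^2(S^1)$, exploiting the fact that $f_x$ has a particularly simple Fourier expansion, supported entirely on strictly negative modes.

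First I would expand $f_x$ as a geometric series, noting that $|x|<1$ guarantees convergence on $S^1$:
\[
f_x(z)=z^{-1}\sum_{k=0}^\infty x^k z^{-k}=\sum_{k=1}^\infty x^{k-1}z^{-k}.
\]
This shows $f_x\in H^2(\cU)^\perp$, so multiplication by $f_x$ sends $H^2(\cU)$ into itself shifted by negative modes. Then for $n\geq 0$, multiplying term by term gives $f_x\cdot e_n=\sum_{k\geq 1}x^{k-1}z^{n-k}$, and applying $(1-\prr)$ (which retains only strictly negative powers of $z$) keeps exactly the terms with $k\geq n+1$. Reindexing $k=n+j$ with $j\geq 1$ yields
\[
(1-\prr)f_x\prr\, e_n=\sum_{j=1}^\infty x^{n+j-1}z^{-j}=x^n\,\xi_x(z),
\]
where $\xi_x(z)=\sum_{j\geq 1}x^{j-1}z^{-j}=z^{-1}(1-xz^{-1})^{-1}$. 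For $n<0$ one has $\prr e_n=0$, so the operator vanishes there.

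Next I would identify this as a rank one operator. Set $\eta_x(z)=(1-\bar x z)^{-1}=\sum_{n\geq 0}\bar x^n z^n\in H^2(\cU)$; a short Fourier calculation yields $\langle e_n,\eta_x\rangle=x^n$ for $n\geq 0$ and $0$ for $n<0$. With the convention $\bigl(\vert\xi\rangle\langle\eta\vert\bigr)\phi=\xi\,\langle\phi,\eta\rangle$, this matches the action just computed on every $e_n$, hence by linearity and continuity on all of $L^2(S^1)$, proving \eqref{rankone}.

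Finally the norm identities follow by summing geometric series:
\[
\Vert\xi_x\Vert^2=\sum_{j=1}^\infty |x|^{2(j-1)}=\frac{1}{1-|x|^2},\qquad \Vert\eta_x\Vert^2=\sum_{n=0}^\infty |x|^{2n}=\frac{1}{1-|x|^2},
\]
and their product gives the last claim. There is no real obstacle here: the whole argument is elementary once one observes that $f_x$ is purely ``anti-analytic'' (in the sense of having only negative Fourier coefficients), which collapses the computation of $(1-\prr)f_x\prr$ into a single geometric summation.
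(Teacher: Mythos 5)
Your proof is correct, and it takes a genuinely different route from the paper. The paper argues conceptually: it first observes that $f_x(z^{-1})$ is holomorphic in $\cU$ so that $\prr f_x(1-\prr)=0$, whence $(1-\prr)f_x\prr=-[\prr,f_x]$; then it uses the algebraic identity $x f_x=(1-xz^{-1})^{-1}-1$ to factor the commutator as $[\prr,f_x]=(1-xz^{-1})^{-1}[\prr,z^{-1}](1-xz^{-1})^{-1}$, and plugs in the rank-one commutator $[\prr,z^{-1}]=-\vert z^{-1}\rangle\langle 1\vert$. You instead expand $f_x=\sum_{j\ge 1}x^{j-1}z^{-j}$, compute $(1-\prr)f_x\prr\, e_n=x^n\xi_x$ for $n\ge 0$ directly on the Fourier basis, and match this against $\vert\xi_x\rangle\langle\eta_x\vert$ via $\langle e_n,\eta_x\rangle=x^n$. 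Your computational approach is more elementary and self-contained, requiring only geometric series; the paper's commutator manipulation is slicker, exposes the structural reason the operator has rank one (namely, that $[\prr,z^{-1}]$ already has rank one and conjugation by $(1-xz^{-1})^{-1}$ preserves rank), and is the kind of calculation that transports directly to more general symbols of the form $h_1 z^{-1} h_2$ with $h_i\in H^\infty$. Both yield identical conclusions, including the norm identities.
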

\begin{proof} The function $f_x(z^{-1})$ is holomorphic in the unit disk and thus $\prr  f_x(1-\prr  )=0$. Next, one has
\[
\prr  f_x(1-\prr  )-(1-\prr  )f_x \prr  =[\prr  ,f_x]
\]
and since $xf_x=(1-xz^{-1})^{-1}-1$, we  obtain
\[
[\prr  ,f_x]=(1-xz^{-1})^{-1}[\prr  ,z^{-1}](1-xz^{-1})^{-1}
\]
and
\[
z [\prr  ,z^{-1}]=z \prr   z^{-1}-\prr  =-\vert 1\rangle \langle 1\vert, \qquad [\prr  ,z^{-1}]=-\vert z^{-1}\rangle \langle 1\vert.
\]
Thus  using the equality $\langle 1\mid (1-xz^{-1})^{-1}\xi \rangle=\langle  (1-\overline{x}z)^{-1}\mid \xi \rangle$ one  derives 
\[
(1-\prr  )f_x \prr  =(1-xz^{-1})^{-1}\vert z^{-1}\rangle \langle 1\vert (1-xz^{-1})^{-1}=\vert \xi_x\rangle \langle \eta_x\vert,
\]
  since the adjoint of multiplication by $z^{-1}$ is multiplication by $z$. The computation of the norms is straightforward.\end{proof} 

 Next theorem shows that the characteristic values of the off diagonal part $(1-\prr  )\rho_\infty \prr  $ decay extremely fast to $0$ in a way which is similar to the decay of the prolate eigenvalues.

\begin{thm}\label{thmkappa}  The off diagonal part $(1-\prr  )\rho_\infty \prr  $ for the function $\rho_\infty$ is the  infinitesimal in $L^2(S^1)$
\begin{equation}\label{uinftyoff1}
(1-\prr  )\kappa \prr  =\sum_\N (-1)^{n+1}\frac{  2\pi ^{2 n+\frac 12} }{(4 n+1) \Gamma (n+1) \Gamma \left(n+\frac{1}{2}\right)}\vert \xi_n\rangle \langle \eta_n\vert	
\end{equation}
using the unit vectors $\xi_n:=\xi_{x_n}/\Vert \xi_{x_n} \Vert$ and $\eta_n:=\eta_{x_n}/\Vert \eta_{x_n} \Vert$,  for  $x_n:=1-\frac{4}{4 n+3}$.
\end{thm}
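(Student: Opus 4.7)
The plan is to combine the explicit formula \eqref{aminusk} for the negative Fourier coefficients $a_{-k}$ of $\kappa$ with Lemma \ref{uinftyqi1}, which already identifies $(1-\prr)f_x\prr$ as a concrete rank-one operator for the generating symbol $f_x(z)=z^{-1}(1-xz^{-1})^{-1}$. Since positive Fourier modes belong to $H^\infty(\cU)$ and contribute nothing to $(1-\prr)\kappa\prr$, only the ``anti-Hardy'' part
\[
\kappa_-(z):=\sum_{k=1}^\infty a_{-k}\, z^{-k}
\]
matters. The key observation is that $f_x(z)=\sum_{k=1}^\infty x^{k-1}z^{-k}$, so the formula \eqref{aminusk} admits the double-series rewriting
\[
a_{-k}=\sum_{n\in\N}\alpha(n)\,x_n^{k-1}, \qquad \alpha(n):=(-1)^{n+1}\frac{16\,\pi^{2n+1/2}}{(4n+3)^{2}\,\Gamma(n+1)\,\Gamma\!\left(n+\tfrac12\right)},
\]
with $x_n=1-\tfrac{4}{4n+3}$. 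Interchanging the sums on $k$ and $n$ yields the decomposition $\kappa_-=\sum_n\alpha(n)\,f_{x_n}$.

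Next I would apply Lemma \ref{uinftyqi1} term by term to obtain the non-normalized expansion
\[
(1-\prr)\kappa\prr=\sum_{n\in\N}\alpha(n)\,|\xi_{x_n}\rangle\langle\eta_{x_n}|.
\]
To pass to the unit vectors $\xi_n:=\xi_{x_n}/\|\xi_{x_n}\|$, $\eta_n:=\eta_{x_n}/\|\eta_{x_n}\|$ one multiplies by $\|\xi_{x_n}\|\,\|\eta_{x_n}\|=1/(1-x_n^2)$. Since $x_n\in(0,1)$, a direct calculation gives
\[
1-x_n^2=\frac{4}{4n+3}\cdot\frac{8n+2}{4n+3}=\frac{8(4n+1)}{(4n+3)^{2}},\qquad \|\xi_{x_n}\|\,\|\eta_{x_n}\|=\frac{(4n+3)^{2}}{8(4n+1)}.
\]
Multiplying $\alpha(n)$ by this factor, the $(4n+3)^{\pm 2}$ terms cancel and the numerical constant $16/8=2$ reproduces exactly the coefficient $(-1)^{n+1}\,2\pi^{2n+1/2}/\bigl((4n+1)\,\Gamma(n+1)\,\Gamma(n+\tfrac12)\bigr)$ asserted in \eqref{uinftyoff1}.

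The one delicate point is the justification of the summation interchange, equivalently the convergence in operator (or trace) norm of the series of rank-one operators. Since $\|\xi_{x_n}\|\,\|\eta_{x_n}\|=(4n+3)^{2}/(8(4n+1))$ grows only linearly in $n$, while $|\alpha(n)|$ decays faster than $\pi^{2n}/(n!)^2$ (super-exponentially in $n$), the series $\sum_n \alpha(n)\,|\xi_{x_n}\rangle\langle\eta_{x_n}|$ is absolutely convergent in trace norm, and the interchange is legitimate. This also affords an alternative verification of Theorem \ref{thmquasiinner0}: $(1-\prr)\kappa\prr$ is an infinitesimal of infinite order, and because $1-x_n\sim 1/n$ the distribution of the points $x_n$ accumulating at $1$ is precisely what produces the prolate-like decay of the characteristic values noted in the remark preceding the theorem.
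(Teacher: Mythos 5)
Your proof is correct and follows the paper's own argument: you decompose the anti-Hardy part of $\kappa$ as a superposition of the $f_{x_n}$ via \eqref{aminusk}, apply Lemma \ref{uinftyqi1} term by term, and renormalize using $\Vert \xi_{x_n}\Vert\,\Vert \eta_{x_n}\Vert=(4n+3)^2/(8(4n+1))$ to arrive at \eqref{uinftyoff1} exactly as the paper does. The only (welcome) difference is that you spell out the trace-norm absolute convergence justifying the interchange of the $k$- and $n$-summations, which the paper leaves implicit.
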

\begin{proof}   The negative  part of the Fourier expansion of $\kappa$ is expressed (\cff \eqref{aminusk}) as a linear combination of the $f_{x_n}$   while the off diagonal part is computed by Lemma \ref{uinftyqi1}. This gives the equality
\begin{equation}\label{uinftyoff}
(1-\prr  )\kappa \prr  =\sum_\N (-1)^{n+1}\frac{16  \pi ^{2 n+\frac 12} (4 n+3)^{-2}}{\Gamma (n+1) \Gamma \left(n+\frac{1}{2}\right)}\vert \xi_{x_n}\rangle \langle \eta_{x_n}\vert.\end{equation}  
 Furthermore one has 
\[
(4 n+3)^{-2}\Vert \xi_{x_n} \Vert \Vert \eta_{x_n} \Vert= \frac{1}{32 n+8}
\]
and thus \eqref{uinftyoff1}  follows from  \eqref{uinftyoff}. \end{proof}

\begin{rmk}\label{functionfinftyrem}
 The operator  $(1-\prr  )\rho_\infty \prr  $ is injective and has dense range.    Equivalently one shows that $(1-\prr  )\kappa \prr  $ is injective and has dense range. The kernel of $(1-\prr  )\kappa \prr  $ is the subspace of   the Hardy space  of functions $f\in H^2(\cU)$ on the unit disk whose product by $\kappa $ belongs to  $H^2(\cU)$. But then $f$ must vanish on all the poles $x(n)$ of $\kappa$ and    since the complex numbers $x(n)$ fulfill the  condition $\sum (1-\vert x(n)\vert)=\infty$  this implies $f=0$ (see \cite{Rudin}). The cokernel of $(1-\prr  )\kappa \prr  $ is the kernel of its adjoint $\prr  \kappa^* (1-\prr  )$. To control this kernel one uses the   following implication 
\[
\rho_\infty(1-z)\rho_\infty(z)=1~\Longrightarrow~ \kappa(1/v)=1/\kappa(v)
\]
which shows that the kernel of $\prr  \kappa^* (1-\prr  )$ is formed of functions $f\in L^2(S^1)$ such that  $g(v):=f(1/v)\in H^2(\cU)$ and $(\kappa^* f)(1/v)\in H^2(\cU)$.   On the unit circle  one has
\[
(\kappa^* f)(1/v)=\kappa^*(1/v)f(1/v)=1/\kappa(1/v)f(1/v)=\kappa(v)g(v)
\]
 thus the condition $(\kappa^* f)(1/v)\in H^2(\cU)$ implies  $\kappa g\in H^2(\cU)$ so that, as above, $g=0$.
\end{rmk}

The restriction of $\rho_\infty$ to the critical line is a smooth function of modulus $1$ which oscillates widely near $\infty$,  thus the function $\kappa(v):=\rho_\infty\left(\frac 12+\frac{v+1}{v-1}\right)$ is not continuous on the  boundary $S^1=\partial \cU$  of the unit disk. Next proposition shows that by subtracting  from $\kappa(v)$ a suitable  holomorphic function $h_\infty \in H^\infty(\cU)$ one obtains a smooth function. This  result gives another   explanation to Theorem \ref{thmquasiinner0} since the quantized differential of a smooth function on $S^1$ is an infinitesimal of infinite order while for an holomorphic function one has $(1-\prr  )h_\infty \prr  =0$.
\begin{prop}\label{propsum} The function $\kappa(v):=\rho_\infty\left(\frac 12+\frac{v+1}{v-1}\right)$ belongs to $C^\infty(S^1)+H^\infty(\cU)$.	
\end{prop}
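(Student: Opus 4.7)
The plan is to exhibit the decomposition $\kappa = \kappa_+ + \kappa_-$, where
\[
\kappa_-(e^{i\theta}) := \sum_{k\geq 1} a_{-k}\,e^{-ik\theta}, \qquad \kappa_+ := \kappa - \kappa_-,
\]
is the splitting of the Fourier series of $\kappa$ into its strictly-negative part and its non-negative part. I then verify separately that $\kappa_- \in C^\infty(S^1)$ and that $\kappa_+ \in H^\infty(\cU)$, which yields the desired decomposition.

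First, the rapid decay of the coefficients $a_{-k}$ is already in hand from the proof of Theorem~\ref{thmquasiinner0}: the majorization by $M(k)=C\sum_n 2^{-n}(1-1/n)^k$ gives $|a_{-k}|=O(2^{-\sqrt k})$, which beats every inverse polynomial. Consequently every $\theta$-derivative of the series defining $\kappa_-$ converges absolutely and uniformly on $S^1$, so $\kappa_- \in C^\infty(S^1)$.

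Next, since $\rho_\infty$ has modulus one on the critical line, $\kappa$ has modulus one almost everywhere on $S^1$, so $\kappa \in L^\infty(S^1)$; combined with $\kappa_- \in C(S^1)$ this gives $\kappa_+ = \kappa - \kappa_- \in L^\infty(S^1)$. By construction the Fourier coefficients of $\kappa_+$ of strictly negative index all vanish, so the power series $\sum_{k \ge 0} \widehat{\kappa_+}(k)\,v^k$ converges in $\cU$ to a holomorphic function whose boundary values coincide with $\kappa_+ $ and whose sup-norm is bounded by $\|\kappa_+\|_\infty$ (this is the standard identification of $H^\infty(\cU)$ with the boundary values in $L^\infty(S^1)$ whose negative Fourier modes vanish, realized through the Poisson integral). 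Hence $\kappa_+ \in H^\infty(\cU)$.

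I foresee no serious obstacle: the real analytic content is entirely packaged in the residue formula \eqref{aminusk} and the rapid-decay estimate of Theorem~\ref{thmquasiinner0}, and the rest of the argument is a routine Fourier-theoretic decomposition together with the classical boundary characterization of $H^\infty(\cU)$. The only point that requires a moment of care is the passage from an $L^\infty$ boundary value with vanishing negative Fourier coefficients to membership in $H^\infty(\cU)$, but this is standard.
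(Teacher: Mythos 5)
Your proof is correct, but it follows a route that is genuinely simpler than the paper's. The paper introduces an auxiliary function $\pi\kappa$ obtained by composing the Mittag--Leffler pole expansion $\pi\rho_\infty(z)=\sum_{n\in\N}\mathrm{Res}_{z=-2n}(\rho_\infty)\cdot(z+2n)^{-1}$ with $\psi$, proves directly (by term-by-term differentiation of the explicit series) that $\pi\kappa\in C^\infty(S^1)$, and then performs a \emph{second} Cauchy--contour computation to show that the negative Fourier coefficients $b_{-k}$ of $\pi\kappa$ coincide with the $a_{-k}$ of $\kappa$; the $H^\infty$ membership of $\kappa-\pi\kappa$ is then obtained from its $H^2$ membership together with boundedness of its nontangential boundary values. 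You bypass the construction of $\pi\kappa$ and the second contour argument entirely: you take the strictly-negative Fourier part $\kappa_-$ as the smooth summand, using only the rapid decay of $(a_{-k})$ already established in the proof of Theorem~\ref{thmquasiinner0}, and then close the argument by the standard fact that a bounded boundary function with vanishing negative Fourier modes lies in $H^\infty(\cU)$ (using $|\kappa|=1$ a.e.~on $S^1$). What the paper's longer route buys is a more structured description of the smooth part --- $\pi\kappa$ is an explicit sum of shifted Cauchy kernels encoding the pole data of $\rho_\infty$, a decomposition that is reused verbatim in Proposition~\ref{propsum1} for $\kappa\kappa_p$ --- whereas your $\kappa_-$ is characterized only spectrally. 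As a self-contained proof of the stated proposition, however, your version is leaner and entirely adequate.
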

\begin{proof} We isolate the pole part of $\rho_\infty$ as  follows. Consider the infinite sum
\begin{equation}\label{piom}
\pi\rho_\infty(z):=\sum_\N  \textrm{Res}_{z=-2n}\left( \rho_\infty(z)\right)\frac{1}{z+2n}=\sum_\N  
\frac{\sqrt{\pi } 2 \pi ^{2 n} (-1)^n}{\Gamma (n+1) \Gamma \left(n+\frac{1}{2}\right)}\frac{1}{z+2n}.
\end{equation}
 After composition with $\psi(v)=\frac 12+\frac{v+1}{v-1}$ we obtain 
\begin{equation}\label{pikap}
\pi\kappa(v)=\sum_\N  
\frac{\sqrt{\pi } 2 \pi ^{2 n} (-1)^n}{\Gamma (n+1) \Gamma \left(n+\frac{1}{2}\right)}\frac{2 (v-1)}{(4 n+3) v-4 n+1}
\end{equation}
 \begin{figure}[H]	\begin{center}
\includegraphics[scale=0.45]{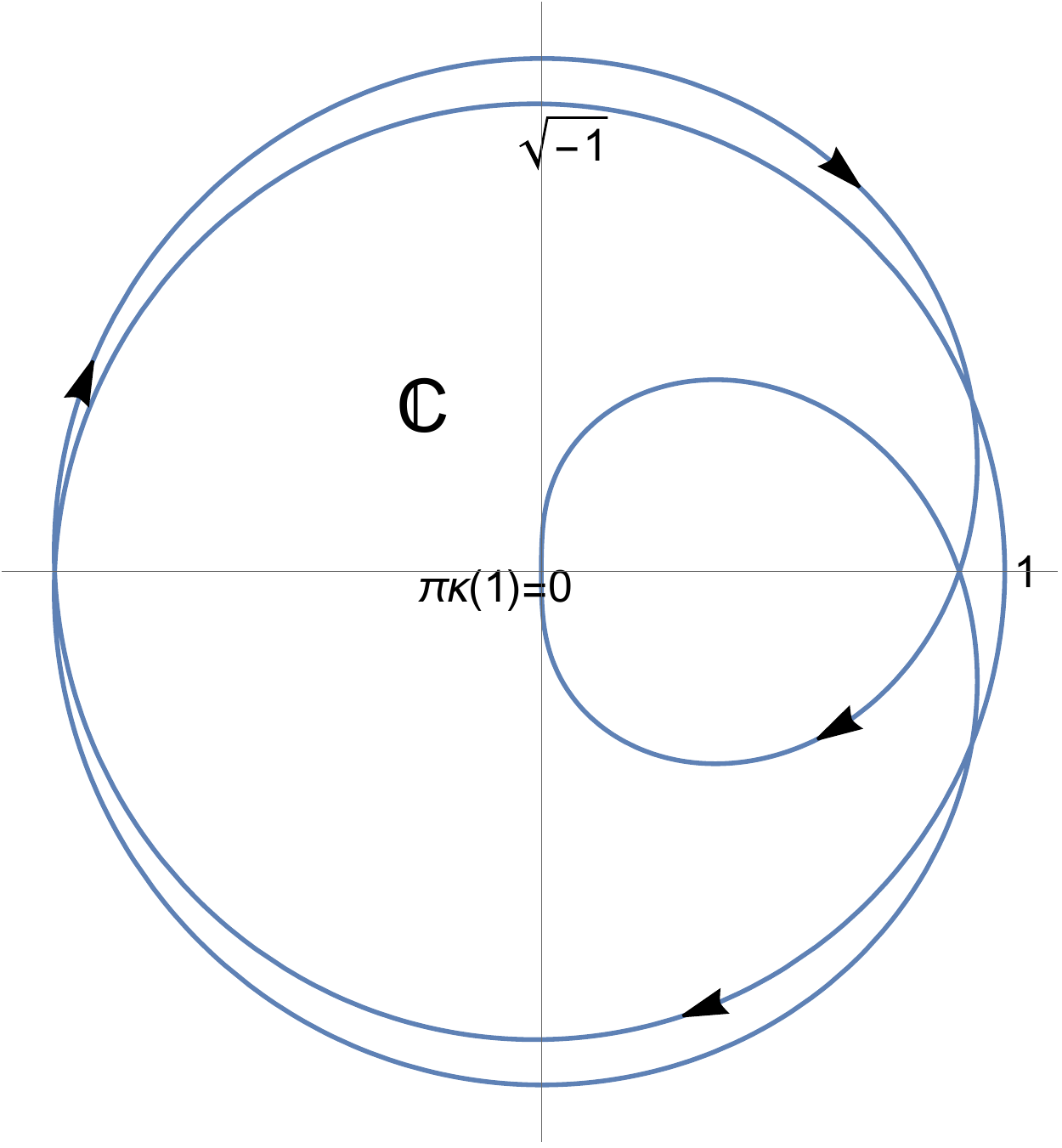}
\end{center}
\caption{The range  $ \pi\kappa(S^1)=\pi\omega_\infty(\partial \C_-)$ \label{pikappag} }
\end{figure}

We show that $\pi\kappa\in C^\infty(S^1)$. When restricted to $S^1=\{v\in \C\mid \vert v\vert =1\}$ the sum \eqref{pikap} defining $\pi\kappa(v)$ converges since the denominator 
fulfills $\vert (4 n+3) v-4 n+1\vert \geq 4$, $\forall v\in S^1$,  for $n>0$. Moreover when applying $m$ times the operator  $v\partial_v$ to the   ratio one obtains 
\[
\left(v\partial_v\right)^m\left(\frac{2 (v-1)}{(4 n+3) v-4 n+1}
\right)=\frac{P_m(v,n)}{\left((4 n+3) v-4 n+1\right)^{m+1}},
\]
where $P_1(v,n)=8 v$ is independent of $n$, $P_2(v,n)=-8 v (-1 + 3 v + 4 n (1 + v))$ and $P_m(v,n)$ is a polynomial in $v$ and $n$ of degree $m-1$ in $n$.
% \begin{figure}[H]	\begin{center}
%\includegraphics[scale=0.55]{pikappaim.pdf}
%\end{center}
%\caption{The imaginary part of $ \pi\kappa(e^{i\theta})$ \label{pikappaim} }
%\end{figure}
This shows that the series of $m$-th derivatives of the terms in the sum \eqref{pikap} defining $\pi\kappa(v)$ is absolutely convergent and hence that $ \pi\kappa(e^{i\theta})$ is a smooth periodic complex valued function. The graphs of  its range in   the complex plane  is shown in Figure \ref{pikappag}. We now apply   Cauchy formula to compute the negative terms $b_{-k}$ in the Fourier series  of $ \pi\kappa(e^{i\theta})$. One has as before 
\[
b_{-k}=\frac{1}{2\pi}\int \pi\kappa(\exp(i\theta))\exp(ik\theta)d\theta
=\frac{1}{2\pi i}\int_{S^1} \pi\kappa(v)v^{k-1}dv.
\]
Changing variables and using $d\psi^{-1}(z)=-\frac{8}{(2 z-3)^2}dz$ gives 
\begin{multline*}
b_{-k}=\frac{1}{2\pi i}\int_{\partial \C_-} \pi\rho_\infty(z)\psi^{-1}(z)^{k-1}d\psi^{-1}(z)
=\\=-\frac{8}{2\pi i}\int_{\partial \C_- } \pi\rho_\infty(z)\left(\frac{2 z+1}{2 z-3}\right)^{k-1}
(2 z-3)^{-2}dz.
\end{multline*}
We use the same contour of integration  $C_{R,m}$ as in Figure \ref{cauchy1}, and the fast convergence of the series \eqref{piom}  to bound $\vert \pi\omega_\infty(z)\vert$ and ensure that one can apply  Cauchy formula. Since the poles and residues of  $\pi\rho_\infty$ are the same as for $\rho_\infty$,  one obtains the equality $b_{-k}=a_{-k}$ for all $k>0$, where the $a_{-k}$ are given by \eqref{aminusk}. It follows that the function in $L^2(S^1)$ given by the difference $\kappa-\pi\kappa$ has all its negative Fourier coefficients equal to $0$. Thus the function $k(v):=\kappa(v)-\pi\kappa(v)$ which is by construction holomorphic in $\cU$ belongs to the Hardy space $H^2(\cU)$ (see \cite{Rudin} Theorem 17.12). The non-tangential limits of the values on $rS^1$, when $r\to 1$, are given except at $v=1$ by $\kappa-\pi\kappa$ and hence are uniformly bounded. Since $k(v)$ is given inside the disk by the Cauchy integral of its boundary values (see \cite{Rudin} Theorem 17.11), it follows that it is bounded and thus belongs to $H^\infty(\cU)$ which gives the required decomposition $\kappa\in C^\infty(S^1)+H^\infty(\cU)$.\end{proof}

\section{The functions $\rho_p$}\label{sectsinglep}

We shall now consider the ratio of local $L$-factors associated to a non-archimedean place, \ie  a prime $p$: 
\[
\rho_p(z):=\frac{1-p^{z-1}}{1-p^{-z}}. 
\]
We first list some easy properties fulfilled by this function

\begin{lem}\label{functionf2} (i)~The function $\rho_p(z)$ is periodic with period $\frac{2 \pi i}{\log p}$. \newline
(ii)~The poles of $\rho_p(z)$ are simple and form the subset $\frac{2 \pi i}{\log p}\Z\subset \C_-$.\newline
(iii)~The function $\rho_p(z)$ is bounded in the half plane $\{z\in\C\mid \Re(z)\leq -\epsilon <0\}$.	\end{lem}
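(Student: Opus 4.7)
The plan is to verify the three claims directly from the defining formula
\[
\rho_p(z)=\frac{1-p^{z-1}}{1-p^{-z}},
\]
using only elementary properties of the exponential $p^z=e^{z\log p}$. None of the parts requires a nontrivial idea, so this is really a bookkeeping exercise.

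For $(i)$, I would note that $p^{z+2\pi i/\log p}=e^{(z+2\pi i/\log p)\log p}=e^{z\log p}\cdot e^{2\pi i}=p^z$, so both $p^{z-1}$ and $p^{-z}$ are $\frac{2\pi i}{\log p}$-periodic, and hence so is their ratio. For $(ii)$, I would find the zeros of the denominator by solving $p^{-z}=1$, which gives $-z\log p\in 2\pi i\Z$, \ie $z\in\frac{2\pi i}{\log p}\Z$. At any such $z$ the numerator evaluates to $1-p^{-1}$, which is nonzero since $p\ge 2$, so each singularity is a simple pole. These points lie on the imaginary axis $\Re(z)=0$, which is contained in $\C_-=\{\Re(z)\le \tfrac12\}$.

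For $(iii)$, the strategy is to bound numerator and denominator separately on $\{\Re(z)\le -\epsilon\}$. In this region $|p^{-z}|=p^{-\Re(z)}\ge p^\epsilon>1$, so the reverse triangle inequality gives
\[
|1-p^{-z}|\ \ge\ |p^{-z}|-1\ \ge\ p^\epsilon-1>0,
\]
a uniform lower bound on the denominator. Simultaneously $|1-p^{z-1}|\le 1+p^{\Re(z)-1}\le 1+p^{-1-\epsilon}$, yielding the uniform upper bound
\[
|\rho_p(z)|\ \le\ \frac{1+p^{-1-\epsilon}}{p^\epsilon-1}.
\]

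The only "obstacle" worth flagging is making sure that in $(ii)$ one checks the numerator really is nonzero at each candidate pole (so these are genuine poles rather than removable), and in $(iii)$ that one picks the correct direction of the triangle inequality for the denominator; both are immediate once $|p^{-z}|>1$ is invoked. Everything else is direct substitution.
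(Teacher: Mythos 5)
Your proof is correct and takes essentially the same elementary approach as the paper; the only cosmetic difference is in (ii), where the paper invokes periodicity together with the Laurent expansion of $\rho_p$ at $z=0$ (which also records the residue $\frac{1-1/p}{\log p}$ used later in the paper), while you locate the zeros of the denominator globally and check the numerator there. One small point to tighten in (ii): the nonvanishing of the numerator at $z\in\frac{2\pi i}{\log p}\Z$ only shows these are genuine (non-removable) singularities; to conclude they are \emph{simple} poles you should also note that the zero of $1-p^{-z}$ there is simple, which is immediate since $\frac{d}{dz}(1-p^{-z})=(\log p)p^{-z}$ never vanishes.
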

\begin{proof} (i)~ is clear. \newline
 (ii)~ follows from periodicity and the expansion at $z=0$
 \[
 \rho_p(z)=\frac{1-\frac{1}{p}}{ \log (p)}\,\frac 1z+\frac{p-3}{2 p}+\frac{(p-13)  \log (p)}{12 p}\, z-\frac{ \log ^2(p)}{2 p}\,z^2+O\left(z^3\right).
 \] 
(iii)~The numerator  $1-p^{z-1}$ of $\rho_p(z)$ is bounded in absolute value by $1+p^{-1}$. The denominator is larger in absolute value than $p^\epsilon-1$.\end{proof}

Next, we compute the Fourier coefficients of the function $\kappa_p(v):=\rho_p(\frac 12+\frac{v+1}{v-1})$ \ie for $k>0$ 
\[
a^{(p)}_{-k}:=\frac{1}{2\pi i}\int_{S^1} \kappa(v)v^{k-1}dv=\frac{1}{2\pi i}\int_{S^1} \rho_p(z)\psi^{-1}(z)^{k-1}d\psi^{-1}(z).
\]
We express these integrals as the sum of residues at the poles $\frac{2 \pi in}{\log p}$, $n\in \Z$ 
\[
a^{(p)}_{-k}=\sum_\Z {\rm Res}_{z=\frac{2 \pi in}{\log p}}\left( \rho_p(z)\left(\frac{2 z+1}{2 z-3}\right)^{k-1} \frac{(-8)}{(2 z-3)^2}\right).
\]
To justify this step we use the same contour as in Section \ref{sectomegainfty} (Figure \ref{cauchy1}) but we  choose $R=\frac{(2m+1)\pi}{\log p}$ which ensures that the restriction of $\rho_p$ to the segment $(\frac 12+i R, \ \frac 12-2m+i R)$ and its complex conjugate fulfills  $\vert \rho_p(z)\vert \leq 1$. In fact one has 
\[
\rho_p\left(x+i \frac{(2m+1)\pi}{\log p}\right)=\frac{p^x+p}{p^{1-x}+p}
\]
which is a real, positive increasing function of $x$ equal to $1$ for $x=\frac 12$. Thus one obtains the same control  as in Section \ref{sectomegainfty} of the integral on the segment $(\frac 12+i R, \ \frac 12-2m+i R)$ by $\frac {\pi}{ 8 R}$. 
 To control the integral on the segment $V=(\frac 12-2m+i R, \ \frac 12-2m-i R)$, one simply uses the bound 
 \[
 \left\vert\rho_p\left(\frac 12-2m+i s\right)\right\vert \leq 2\left(p^{2m-\frac 12}-1\right)^{-1}
 \]
 which gives, as in Section \ref{sectomegainfty}, the same bound for the integral on $V$. We can thus apply the residue formula. 
 For $z=\frac{2 \pi in}{\log p}$ one has 
 \[
 \frac{2 z+1}{2 z-3}=\frac{4 \pi  n-i \log p}{4 \pi  n+3 i \log p}=x_p(n), \qquad \frac{-8}{(2 z-3)^2}=
 \frac{8 \log ^2(p)}{(4 \pi  n+3 i \log p)^2}.
 \]
To obtain the residue one multiplies by $\frac{1-\frac{1}{p}}{ \log (p)}$ as shown from periodicity and the expansion at $z=0$ above. Thus one gets 
\[
{\rm Res}_{z=\frac{2 \pi in}{\log p}}\left( \rho_p(z)\left(\frac{2 z+1}{2 z-3}\right)^k \frac{(-8)}{(2 z-3)^2}\right)= \frac{8(1-\frac{1}{p}) \log p}{(4 \pi  n+3 i \log p)^2}\, x_p(n)^k.
\] 
 From this we derive 
\[
a^{(2)}_{-k}=8(1-\frac{1}{p}) \log p\ \sum_\Z \frac{1}{(4 \pi  n+3 i \log p)^2}\,x_p(n)^{k-1}
\]
and with $\kappa_p(v):=\rho_p(\frac 12+\frac{v+1}{v-1})$ we obtain the equality
\[
(1-\prr   )\kappa_p \prr   =8(1-\frac{1}{p}) \log p\ \sum_\Z \frac{1}{(4 \pi  n+3 i \log p)^2}\,(1-\prr   )f_{x_p(n)}\prr. \]
 By Lemma \ref{uinftyqi1} this gives 
\begin{equation}\label{offdiag2}
(1-\prr   )\kappa_p \prr   =8(1-\frac{1}{p}) \log p\ \sum_\Z \frac{1}{(4 \pi  n+3 i \log p)^2}\vert \xi_{x_p(n)}\rangle \langle \eta_{x_p(n)}\vert.	
\end{equation}

\begin{lem}\label{functionf2scalprod}  For $n\in \Z$, let $\zeta_n$ be the vector in $\ell^2(\N)$ with components
\begin{equation}\label{zetank}
\zeta_n(k):=\frac{2^{3/2}(\log p)^{1/2}}{(4 \pi  n+3 i \log p)}x_p(n)^k.
\end{equation}
	Then for any $n,m\in \Z$, one has 
	\[
	\langle \zeta_m\mid \zeta_n \rangle=\frac{1}{ (2 i \pi  m-2 i \pi  n+\log p)}.
	\]
\end{lem}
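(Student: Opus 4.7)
The computation is a direct geometric series evaluation followed by algebraic simplification, so my plan is quite concrete.

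First, I would expand the inner product pointwise. Writing $\alpha_n := 2^{3/2}(\log p)^{1/2}/(4\pi n + 3i\log p)$ so that $\zeta_n(k)=\alpha_n\,x_p(n)^k$, one has
\[
\langle \zeta_m\mid \zeta_n\rangle \;=\; \overline{\alpha_m}\,\alpha_n\,\sum_{k\geq 0}\bigl(\overline{x_p(m)}\,x_p(n)\bigr)^{k}.
\]
Both $|x_p(m)|$ and $|x_p(n)|$ are strictly less than $1$ (since $|x_p(n)|^2=(16\pi^2n^2+\log^2p)/(16\pi^2n^2+9\log^2p)<1$), so the geometric series converges and equals $\bigl(1-\overline{x_p(m)}\,x_p(n)\bigr)^{-1}$.

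The next step is to simplify $1-\overline{x_p(m)}\,x_p(n)$ by putting it over the common denominator $(4\pi m-3i\log p)(4\pi n+3i\log p)$. Expanding the numerator
\[
(4\pi m-3i\log p)(4\pi n+3i\log p)-(4\pi m+i\log p)(4\pi n-i\log p),
\]
the $16\pi^2 mn$ terms and the mixed real-imaginary cross terms cancel cleanly, leaving $8\log p\,(\log p+2\pi i(m-n))$. Thus
\[
\frac{1}{1-\overline{x_p(m)}\,x_p(n)}\;=\;\frac{(4\pi m-3i\log p)(4\pi n+3i\log p)}{8\log p\,\bigl(\log p+2\pi i(m-n)\bigr)}.
\]

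Finally, multiplying by $\overline{\alpha_m}\alpha_n=8\log p/\bigl((4\pi m-3i\log p)(4\pi n+3i\log p)\bigr)$ makes the two denominators cancel exactly, yielding
\[
\langle \zeta_m\mid \zeta_n\rangle\;=\;\frac{1}{\log p+2\pi i(m-n)}\;=\;\frac{1}{2i\pi m-2i\pi n+\log p},
\]
as claimed. There is no real obstacle here: it is purely a matter of executing the geometric-sum identity and tracking the cancellation between the normalising constants $\alpha_n$ and the factor $1-\overline{x_p(m)}x_p(n)$, which is manifestly designed so that those factors match.
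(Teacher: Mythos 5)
Your proof is correct and follows exactly the same route as the paper's: sum the geometric series to get $\bigl(1-\overline{x_p(m)}x_p(n)\bigr)^{-1}$ and simplify against the normalising constants. The paper simply states the value of the geometric sum without showing the intermediate expansion of the numerator, whereas you write it out explicitly; the algebra you carry out (yielding $8\log p\,(\log p + 2\pi i(m-n))$) is verified and matches.
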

\begin{proof}  For any $n,m\in \Z$, one has
\[
\sum_{k=0}^\infty x_p(n)^k (\overline{x_p(m)})^k=\frac{(4 \pi  m-3 i \log p) (4 \pi  n+3 i \log p)}{8 \log p (2 i \pi  m-2 i \pi  n+\log p)}
\]
since the sum is given by the inverse of $1-x_p(n)\overline{x_p(m)}$.\end{proof}  

\begin{lem}\label{functionf2scalprod1} There exists a unique positive operator $B:\ell^2(\Z)\to \ell^2(\Z)$ such that 
\[
\langle  B(\dirac_m)\mid B(\dirac_n)\rangle =\frac{1}{(2 i \pi  m-2 i \pi  n+\log p)}\qquad\forall n,m\in \Z.
\]
The operator $B$ is bounded, with bounded inverse and absolutely continuous spectrum the interval $[1,\sqrt p]$.	
\end{lem}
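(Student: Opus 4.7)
The relation $\langle B\dirac_m, B\dirac_n\rangle = \frac{1}{\log p + 2\pi i(m-n)}$ forces the matrix of $G := B^*B = B^2$ (using $B \geq 0$) to be $G_{mn} = \frac{1}{\log p + 2\pi i(m-n)}$, which depends only on the difference $m-n$. Hence $G$ is a two-sided convolution (Toeplitz-type) operator on $\ell^2(\Z)$, and it is automatically positive semidefinite since it is the Gram matrix of the family $\{\zeta_n\}$ of Lemma \ref{functionf2scalprod}. Existence and uniqueness of a positive $B$ with $B^2 = G$ is then simply the existence and uniqueness of the positive square root, so my plan is to take $B := G^{1/2}$ once $G$ has been shown to be bounded and boundedly invertible.

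To study $G$ I would diagonalize it by the Fourier unitary $\mathcal{F}:\ell^2(\Z)\to L^2(S^1,d\theta/(2\pi))$, $\dirac_n\mapsto e^{in\theta}$, under which any convolution operator is conjugate to multiplication by its symbol. The symbol here is the $2\pi$-periodic function $\phi$ with Fourier coefficients $\widehat\phi_k = 1/(\log p+2\pi i k)$. Rather than summing this only conditionally convergent series pointwise, I would recover $\phi$ as the unique $L^2(S^1)$ function with these prescribed coefficients by a direct computation of $\int_0^{2\pi}e^{-(\log p)\theta/(2\pi)}e^{-ik\theta}\,d\theta$, yielding
\[
\phi(\theta) \;=\; \frac{p^{\,1-\theta/(2\pi)}}{p-1},\qquad \theta\in[0,2\pi),
\]
a smooth, strictly decreasing positive function taking values from $p/(p-1)$ down to $1/(p-1)$.

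Since $\phi$ is strictly positive and bounded away from $0$ and $\infty$, the operator $G\simeq M_\phi$ is bounded with bounded inverse, and hence so is $B=G^{1/2}\simeq M_{\sqrt\phi}$. The spectrum of $B$ is then the essential range of $\sqrt\phi$, namely the interval $[1/\sqrt{p-1},\sqrt{p/(p-1)}]$; up to the natural scaling factor $1/\sqrt{p-1}$ (equivalently, after normalizing so that $\inf \sqrt\phi = 1$) this is the stated interval $[1,\sqrt p]$. Absolute continuity of the spectrum is then automatic: $\phi$ is a real-analytic diffeomorphism of $(0,2\pi)$ onto its image, so the pushforward of Haar measure by $\sqrt\phi$ is absolutely continuous with respect to Lebesgue measure on the spectral interval, giving $M_{\sqrt\phi}$ simple absolutely continuous spectrum.

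The one delicate point is the explicit identification of $\phi$: the Fourier series has a jump discontinuity at $\theta=0$ and converges only conditionally there, so the cleanest route is to recover $\phi$ as the unique $L^2(S^1)$ function matching the prescribed Fourier coefficients, rather than attempt to sum the series pointwise. Once $\phi$ is in hand, boundedness, positivity, invertibility and absolute continuity of the spectrum all follow from standard properties of multiplication operators on $L^2(S^1)$.
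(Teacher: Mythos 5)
Your argument is, up to a trivial reparameterization, the same as the paper's: both recognize that the Gram matrix $G_{mn}=\frac{1}{\log p+2\pi i(m-n)}$ is Toeplitz, conjugate it by the Fourier unitary to a multiplication operator, read off the symbol as a decaying exponential of the angle, and take the positive square root. The paper works in $L^2([0,1])$ with the basis $e_n(x)=e^{2\pi inx}$ and the symbol $s(x)=p^{1-x}$; you work in $L^2(S^1)$ with $e^{in\theta}$ and the symbol $\phi(\theta)=p^{1-\theta/(2\pi)}/(p-1)$. These are the same construction.

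You have, however, correctly spotted a constant-factor slip in the paper. A direct computation gives $\int_0^1 p^{1-x}e^{-2\pi inx}\,dx=\frac{p-1}{\log p+2\pi in}$, so the paper's choice $s(x)=p^{1-x}$ actually produces the Gram matrix $\frac{p-1}{\log p+2\pi i(m-n)}$; the correct symbol matching the required inner products is your $\phi(\theta)=\frac{p^{1-\theta/(2\pi)}}{p-1}$, and the resulting spectral interval for $B=M_\phi^{1/2}$ is $\bigl[\tfrac{1}{\sqrt{p-1}},\ \sqrt{\tfrac{p}{p-1}}\bigr]$ rather than $[1,\sqrt p]$. This is a harmless error: everything downstream (Lemma \ref{functionf2isom}, Lemma \ref{functionf2descr}, Fact \ref{factnq}) uses only that $B$ is bounded, positive, and boundedly invertible with non-trivial absolutely continuous spectrum, none of which is affected by the overall scaling. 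Your note that absolute continuity of the spectrum follows from the pushforward of Haar measure under the real-analytic diffeomorphism $\sqrt\phi$ is a clean way to justify the last assertion of the Lemma, which the paper states without elaboration.
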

\begin{proof} Let us compute the Fourier expansion of the function $s(x):=p^{1-x}$ for $x\in [0,1)$. One has
\[
\int_0^1 s(x) \exp (-2 i \pi  n x) \, dx=\frac{1}{\log p+2 i \pi  n}.
\]
Thus, with $S$ the operator of multiplication by $s(x)$ in the Hilbert space $L^2([0,1])$, and fixing the orthonormal basis $e_n(x):=\exp (2 i \pi  n x)$, we get the equality 
\[
\langle  e_m \mid S(e_n)\rangle=\frac{1}{(2 i \pi  m-2 i \pi  n+\log p)}\qquad\forall n,m\in \Z.
\]
One then lets $B$ be the conjugate under the isomorphism $\ell^2(\Z)\to L^2([0,1])$ of the positive square root of $S$ given by the multiplication by the function $s(x)^{\frac 12}:=p^{\frac{1-x}{2}}$. \end{proof}

\begin{lem}\label{functionf2isom} Consider the linear map $V:\ell^2(\Z)\to \ell^2(\N)$,  $V(\dirac_n)=\zeta_n$. Then the map $VB^{-1}=U$ is an isometry of $\ell^2(\Z)$ with a closed infinite dimensional subspace of $\ell^2(\N)$.
\end{lem}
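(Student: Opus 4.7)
The plan is to identify $V^*V$ with $B^2$ using the given Gram matrix identity, and then to use the invertibility of $B$ to renormalize $V$ into an isometry.

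First I would check that $V$ is well-defined as a bounded operator from $\ell^2(\Z)$ to $\ell^2(\N)$, rather than just a map on the basis vectors $\dirac_n$. For a finite linear combination $\xi=\sum_n c_n\dirac_n$, combining Lemma \ref{functionf2scalprod} and Lemma \ref{functionf2scalprod1} one has
\[
\|V\xi\|^2=\sum_{m,n}\overline{c_m}c_n\langle\zeta_m\mid\zeta_n\rangle=\sum_{m,n}\overline{c_m}c_n\langle B(\dirac_m)\mid B(\dirac_n)\rangle=\|B\xi\|^2,
\]
so $V$ extends by continuity to a bounded operator on all of $\ell^2(\Z)$ satisfying $\|V\xi\|=\|B\xi\|$, with $\|V\|\leq \|B\|\leq\sqrt p$. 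Equivalently, $V^*V=B^*B=B^2$, using that $B$ is positive self-adjoint.

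Next, since by Lemma \ref{functionf2scalprod1} the spectrum of $B$ is contained in $[1,\sqrt p]$, the operator $B$ has a bounded inverse $B^{-1}$, and the composition $U:=VB^{-1}$ makes sense as a bounded operator $\ell^2(\Z)\to\ell^2(\N)$. The crucial computation is then
\[
U^*U=(VB^{-1})^*(VB^{-1})=B^{-1}V^*VB^{-1}=B^{-1}B^2B^{-1}=I,
\]
which is the defining property of an isometry. Being an isometry of a Hilbert space, $U$ has closed range, and since its domain $\ell^2(\Z)$ is infinite dimensional, the range $U(\ell^2(\Z))\subset\ell^2(\N)$ is a closed infinite dimensional subspace.

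There is really no major obstacle in this argument: once the scalar products of the $\zeta_n$ have been matched with those of the $B\dirac_n$ (Lemmas \ref{functionf2scalprod}--\ref{functionf2scalprod1}), the claim is essentially the polar decomposition statement $V=UB$ with $U$ a partial isometry whose initial space is all of $\ell^2(\Z)$ because $B$ is invertible. The only point that deserves explicit care is the boundedness of $V$ on the whole Hilbert space, which is why I would begin with the Gram-matrix identity rather than defining $V$ only on the algebraic span of the $\dirac_n$.
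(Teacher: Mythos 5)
Your proof is correct and follows essentially the same route as the paper: the paper's own argument is exactly the Gram-matrix identity $\langle V\dirac_m\mid V\dirac_n\rangle=\langle B\dirac_m\mid B\dirac_n\rangle$ followed by the observation that invertibility of $B$ makes $VB^{-1}$ an isometry. You simply spell out the intermediate steps (boundedness of $V$, the identity $V^*V=B^2$, and the standard fact that an isometry has closed range) that the paper leaves implicit.
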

\begin{proof} One has the equality of inner products 
\[
\langle  V(\dirac_m) \mid V(\dirac_n)\rangle=\langle \zeta_m\mid \zeta_n \rangle=\frac{1}{ (2 i \pi  m-2 i \pi  n+\log p)}=\langle  B(\dirac_m)\mid B(\dirac_n)\rangle
\]
and since $B$ is invertible this shows that $VB^{-1}$ is an isometry.\end{proof}

We now let $U_\pm:\ell^2(\N)\to L^2(S^1)$ be the unitary isomorphisms with {\it resp.} the range of $\prr   $ and $1-\prr   $ given by $U_+(\dirac_n)=z^n$ and $U_-(\dirac_n)=z^{-n-1}$.  Then with the above notations one obtains the following

\begin{lem}\label{functionf2descr} Consider the involution $I: \ell^2(\Z)\to \ell^2(\Z)$  $I(\dirac_{n})= \dirac_{-n}$. Then one has
\begin{equation}\label{offdiag2bis}
(1-\prr   )\kappa_p \prr   =\frac{1-p}{p} U_-VIV^*U_+^*=\frac{1-p}{p} U_-UBIBU^*U_+^*.
\end{equation}
\end{lem}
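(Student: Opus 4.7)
The plan is to start from the explicit series \eqref{offdiag2} for $(1-\prr)\kappa_p\prr$ and to peel off the unitaries $U_\pm$ and the isometry $V$ (respectively $UB$) one after another, until every factor in the formula is recognized.

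First I would expand the vectors $\xi_{x_p(n)}$ and $\eta_{x_p(n)}$ as geometric series in $L^2(S^1)$. Since $\xi_x(z)=\sum_{k\ge 0}x^k z^{-k-1}$ and $\eta_x(z)=\sum_{k\ge 0}\bar x^k z^k$, the definitions of $U_\pm$ give $\xi_{x_p(n)}=U_- w_n$ and $\eta_{x_p(n)}=U_+ w_n'$ where $w_n(k)=x_p(n)^k$ and $w_n'(k)=\overline{x_p(n)}^k$. Writing $c_n:=2^{3/2}(\log p)^{1/2}/(4\pi n+3i\log p)$, the definition \eqref{zetank} reads $\zeta_n=c_n w_n$, so that $w_n=c_n^{-1}V\delta_n$. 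The key algebraic observation is the symmetry $\overline{x_p(n)}=x_p(-n)$, obtained by a direct check from the formula for $x_p(n)$; this identifies $w_n'$ with $c_{-n}^{-1}\zeta_{-n}=c_{-n}^{-1}V\delta_{-n}$.

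Next I would substitute these expressions into each rank-one term:
\[
\vert\xi_{x_p(n)}\rangle\langle\eta_{x_p(n)}\vert
= c_n^{-1}\,\overline{c_{-n}}^{-1}\,U_-\vert V\delta_n\rangle\langle V\delta_{-n}\vert U_+^*.
\]
The scalar coefficient in \eqref{offdiag2} is $8(1-1/p)\log p\,(4\pi n+3i\log p)^{-2}=(1-1/p)c_n^{2}$. Multiplying through, the prefactor of the $n$-th term becomes $(1-1/p)\,c_n/\overline{c_{-n}}$, and an elementary simplification yields $c_n/\overline{c_{-n}}=-1$ (the numerator and denominator are opposite after taking the complex conjugate in $c_{-n}$). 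This collapses the dependence on $n$ in the coefficient, and $-(1-1/p)=(1-p)/p$.

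Summing over $n\in\Z$ then gives
\[
(1-\prr)\kappa_p\prr=\frac{1-p}{p}\,U_-\Bigl(\sum_{n\in\Z}\vert V\delta_n\rangle\langle V\delta_{-n}\vert\Bigr)U_+^*,
\]
and the bracketed operator equals $VIV^*$: indeed $I=\sum_n\vert\delta_{-n}\rangle\langle\delta_n\vert$, so $VIV^*=\sum_n\vert V\delta_{-n}\rangle\langle V\delta_n\vert$, which coincides with the displayed sum after relabelling $n\mapsto-n$. Convergence of the operator-valued series is not an issue since \eqref{offdiag2} converges in trace norm and $VIV^*$ is bounded because $V=UB$ is. The second identity is immediate: by Lemma \ref{functionf2isom} one has $V=UB$, and by Lemma \ref{functionf2scalprod1} $B$ is positive, hence self-adjoint, so $V^*=BU^*$ and $VIV^*=UBIBU^*$. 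The only genuinely delicate step is verifying the identity $c_n/\overline{c_{-n}}=-1$, together with the conjugation symmetry $\overline{x_p(n)}=x_p(-n)$; everything else is bookkeeping.
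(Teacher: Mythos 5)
Your derivation follows the paper's proof in all essentials: expand $\xi_x,\eta_x$ as geometric series, use the conjugation symmetry $\overline{x_p(n)}=x_p(-n)$ to relate $\eta_{x_p(n)}$ to $\zeta_{-n}$, cancel the prefactors down to $\frac{1-p}{p}$, and recognize the resulting sum as $U_-VIV^*U_+^*$; the cosmetic difference is that you package the prefactor bookkeeping into the scalar $c_n$ and the identity $c_n/\overline{c_{-n}}=-1$, which is a clean way to organize the same cancellation the paper performs term by term.

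One factual slip in your convergence remark: the series \eqref{offdiag2} does \emph{not} converge in trace norm. The trace norm of the $n$-th rank-one term is
\[
\Bigl\vert 8\bigl(1-\tfrac1p\bigr)\log p\ \frac{1}{(4\pi n+3i\log p)^2}\Bigr\vert\cdot \frac{1}{1-\vert x_p(n)\vert^2}
=\Bigl(1-\tfrac1p\Bigr)\cdot\frac{8\log p}{16\pi^2 n^2+9\log^2 p}\cdot\frac{16\pi^2 n^2+9\log^2 p}{8\log^2 p}
=\frac{1-\tfrac1p}{\log p},
\]
a positive constant independent of $n$, so the partial sums are not even Cauchy in $\mathcal L^1$. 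This is as it must be: the resulting operator $(1-\prr)\kappa_p\prr$ is not compact (Fact \ref{factnq}). The correct justification is that the identity holds entry-by-entry in the matrix basis $\{z^m\}_{m\geq 0}\to\{z^{-k}\}_{k\geq 1}$, or equivalently in the strong operator topology, since both sides are bounded operators ($\kappa_p\in L^\infty$, and $V=UB$ is bounded) and the sum $\sum_n\vert\delta_{-n}\rangle\langle\delta_n\vert$ converges strongly to $I$.
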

\begin{proof} One has 
\[
\frac{2^{3/2}(\log p)^{1/2}}{(4 \pi  n+3 i \log p)}\xi_{x_p(n)}=U_-(\zeta_n)
\]
and since $\overline{x_p(n)}=x_p(-n)$
\[
-\frac{2^{3/2}(\log p)^{1/2}}{(4 \pi  n-3 i \log p)}\eta_{x_p(n)}=U_+(\zeta_{-n})
\]
which gives 
\[
\frac{-8 \log p}{(4 \pi  n+3 i \log p)^2}\vert \xi_{x_p(n)}\rangle \langle \eta_{x_p(n)}\vert=\vert U_-(\zeta_n)	\rangle \langle U_+(\zeta_{-n})\vert
\]
By \eqref{offdiag2} we thus get, since $V(\dirac_n)=\zeta_n$
\begin{multline*}
(1-\prr   )\kappa_p \prr   =8(1-\frac{1}{p}) \log p\ \sum_\Z \frac{1}{(4 \pi  n+3 i \log p)^2}\vert \xi_{x_p(n)}\rangle \langle \eta_{x_p(n)}\vert	=\\
=\frac{1-p}{p}\ \sum_\Z \vert U_-(\zeta_n)	\rangle \langle U_+(\zeta_{-n})\vert
=\frac{1-p}{p}\ \sum_\Z \vert U_-(V(\dirac_n))	\rangle \langle U_+(V(\dirac_{-n}))\vert.
\end{multline*}
Then  \eqref{offdiag2bis} follows using the equalities
\[
I= \sum_\Z  \vert \dirac_n\rangle \langle \dirac_{-n}\vert, \qquad 
U_-VIV^*U_+^*=\sum_\Z \vert U_-(V(\dirac_n))	\rangle \langle U_+(V(\dirac_{-n}))\vert
\]
\end{proof} 

In particular \eqref{offdiag2bis} shows that $(1-\prr   )\kappa_p \prr   $ has the same strength as $BIB$ and from this we derive the following

\begin{fact}\label{factnq} The function $\rho_p$ is not quasi-inner.	
\end{fact}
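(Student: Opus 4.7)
The plan is to read compactness off the factored expression \eqref{offdiag2bis} and to show that the central operator $BIB$ is a bounded isomorphism, hence noncompact.

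First, I would observe that since $U_+, U_-$ are unitaries between $\ell^2(\N)$ and $\prr L^2(S^1)$, resp.\ $(1-\prr)L^2(S^1)$, the compactness of $(1-\prr)\kappa_p\prr$ is equivalent to the compactness of
\[
U_-^*(1-\prr)\kappa_p\prr\, U_+\;=\;\tfrac{1-p}{p}\,VIV^{*}\;=\;\tfrac{1-p}{p}\,UBIBU^{*},
\]
where I used $V=UB$ from Lemma \ref{functionf2isom} together with $B^{*}=B$. Next, since $U:\ell^2(\Z)\to \ell^2(\N)$ is an isometry, it satisfies $U^{*}U=1$, so a further conjugation gives
\[
U^{*}\bigl(UBIBU^{*}\bigr)U \;=\; BIB.
\]
Compactness is preserved in both directions under pre/post-composition with bounded operators, so $(1-\prr)\kappa_p\prr$ is compact if and only if $BIB\in \mathcal{B}(\ell^2(\Z))$ is compact.

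Then I would use Lemma \ref{functionf2scalprod1}: the operator $B$ is bounded with bounded inverse and absolutely continuous spectrum equal to the interval $[1,\sqrt{p}]$, while $I$ is a unitary involution ($I^{2}=1$). Consequently $BIB$ is a bounded invertible operator, with inverse $B^{-1}IB^{-1}$. Since $\ell^2(\Z)$ is infinite-dimensional, an operator with bounded inverse cannot be compact (its characteristic values are bounded below by the positive number $\|B^{-1}IB^{-1}\|^{-1}$ and do not tend to zero). Therefore $BIB$ is not compact, and by the equivalence above neither is $(1-\prr)\kappa_p\prr$, so $\rho_p$ fails to be quasi-inner.

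There is essentially no obstacle: all the required work has been carried out in Lemmas \ref{functionf2scalprod}, \ref{functionf2scalprod1}, \ref{functionf2isom}, \ref{functionf2descr}. The only point requiring care is the bookkeeping that the passage from $(1-\prr)\kappa_p\prr$ to $BIB$ preserves compactness in both directions; this is handled by the fact that $U_\pm$ are unitary and that $U^{*}U=1$ for the isometry $U$.
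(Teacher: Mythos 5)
Your proof is correct and follows the route the paper intends: the paper's Fact is stated as an immediate corollary of Lemma \ref{functionf2descr} (``$(1-\prr)\kappa_p\prr$ has the same strength as $BIB$''), with $BIB$ invertible by Lemma \ref{functionf2scalprod1} and $I^2=1$, hence noncompact on the infinite-dimensional $\ell^2(\Z)$; you have simply written out the bookkeeping of the conjugations by $U_\pm$ and by the isometry $U$. The one imprecision is the loose claim that ``compactness is preserved in both directions under pre/post-composition with bounded operators'' (false in general), but the specific identities $U^*U=1$, $U_+U_+^*=\prr$, $U_-U_-^*=1-\prr$ that you invoke do give the needed two-sided equivalence, and in fact for the Fact you only need the easy direction ($(1-\prr)\kappa_p\prr$ compact would force $BIB$ compact).
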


 Given a sequence of complex numbers $\alpha_n$ in the open unit disk $\cU$ such that $\sum_n (1-\vert \alpha_n\vert)<\infty $, the associated Blaschke product  is the following function of $v\in U$
\[
\prod_n\frac{\alpha_n-v}{1-\bar \alpha_n v}\times \frac{\vert \alpha_n\vert}{\alpha_n}.
\]
Note that   the linear map $V$ is not surjective since the complex numbers $x_p(n)$ fulfill the Blaschke product condition $\sum (1-\vert x_p(n)\vert)<\infty$ so that the orthogonal of the range of $U_+V$ contains all multiples of the corresponding Blaschke product $B_p$ over all $x_p(n)$. This suggests that there should be a direct relation between $\kappa_p$ and  $B_p$. By construction the product of  $\kappa_p$ and $B_p$ is holomorphic in $\cU$ and has modulus equal to $1$ on the boundary. We show that this product is an inner function which we explicitly determine in Proposition \ref{Blaschke} below. We also determine  the kernel and cokernel of $(1-\prr   )\kappa_p \prr$.

\begin{prop}\label{Blaschke} Let $B_p(v)$ be the Blaschke product  associated to the sequence $x_p(n)\in \cU$, $n\in \Z$.
\begin{enumerate}[(i)]
\item The product $\iota(v)=\kappa_p(v)B_p(v)$ is the inner function  $-p^{\frac {v+1}{v-1}}$.
\item The kernel of $(1-\prr   )\kappa_p \prr   $ is the shift invariant subspace of $H^2(\cU)$ of multiples of $B_p$.
\item The cokernel of $(1-\prr   )\kappa_p \prr   $ is the image of its kernel by the unitary involution $J:L^2(S^1)\to L^2(S^1)$, $Jf(z):=z^{-1}f(z^{-1})$.
\end{enumerate}
\end{prop}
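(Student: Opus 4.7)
The plan for (i) starts from the hyperbolic-sine factorization
\[
\rho_p(z)=-p^{\,z-1/2}\,\frac{\sinh\bigl((z-1)\log p/2\bigr)}{\sinh(z\log p/2)},
\]
obtained by writing $1-p^{z-1}=-2p^{(z-1)/2}\sinh\bigl((z-1)\log p/2\bigr)$ and the analogous identity for $1-p^{-z}$. Composing with $\psi$ and using $p^{\psi(v)-1/2}=p^{(v+1)/(v-1)}$, this writes $\kappa_p(v)=-p^{(v+1)/(v-1)}g(\psi(v))$ with $g(z):=\sinh((z-1)\log p/2)/\sinh(z\log p/2)$, so (i) reduces to identifying $F(v):=-p^{(v+1)/(v-1)}/\kappa_p(v)$ with the Blaschke product $B_p(v)$.

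First I would check $F\in H^\infty(\cU)$ with $\Vert F\Vert_\infty\le 1$. Writing $F(v)=-(p^{\psi(v)-1/2}-p^{-1/2})/(1-p^{\psi(v)-1})$ and $\psi(v)=\sigma+it$ with $\sigma\le 1/2$, a direct calculation gives
\[
|F|^2=\frac{p^{2\sigma-1}-2p^{\sigma-1}\cos(t\log p)+p^{-1}}{1-2p^{\sigma-1}\cos(t\log p)+p^{2\sigma-2}},
\]
whose denominator minus numerator equals $(p-1)(1-p^{2\sigma-1})/p\ge 0$, so $|F|\le 1$ with equality on $\sigma=1/2$. Since $\kappa_p$ has only simple poles in $\cU$ at $\{x_p(n)\}$ and no zeros in $\cU$ (zeros of $\rho_p$ lie on $\Re z=1$, outside $\C_-$), $F$ is holomorphic with simple zeros exactly at $\{x_p(n)\}$; hence $F$ is inner, and its canonical factorization reads $F=c\,B_p\,S$ for some $c\in S^1$ and singular inner $S$. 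Because $F$ extends holomorphically across every arc of $S^1\setminus\{1\}$, the singular measure of $S$ must be an atom $\alpha\,\delta_{v=1}$, i.e.\ $S(v)=\exp\bigl(-\alpha(1+v)/(1-v)\bigr)$. The non-tangential limit at $v=1$ from inside $\cU$ is $F(v)\to p^{-1/2}$, since $\Re\psi(v)\to -\infty$ forces both $p^{\psi(v)-1/2}$ and $p^{\psi(v)-1}$ to $0$. If $\alpha>0$ then $|S(v)|\to 0$ non-tangentially at $1$, so $|F|=|B_p|\,|S|\le |S|\to 0$, contradicting $|F|\to p^{-1/2}$; thus $\alpha=0$ and $F=c\,B_p$. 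Finally evaluating at $v=0$ (where $\psi(0)=-1/2$): $F(0)=p^{1/2}/(p+p^{1/2}+1)$ by direct substitution into $\rho_p(-1/2)$, while the Weierstrass product $\sinh(\pi x)/(\pi x)=\prod_{n\ge 1}(1+x^2/n^2)$ applied to $B_p(0)=\prod_n|x_p(n)|$ (using $|x_p(0)|=1/3$ and $|x_p(-n)|=|x_p(n)|$) produces the same value. Hence $c=1$ and (i) is established.

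Part (ii) follows because, for $f\in H^2(\cU)$, the condition $\kappa_p f\in H^2(\cU)$ is equivalent to $f$ vanishing at every $x_p(n)$ (to cancel the simple poles of $\kappa_p$), which by Blaschke factorization of the sequence $\{x_p(n)\}$ (satisfying $\sum(1-|x_p(n)|)<\infty$) is equivalent to $f\in B_p H^2$; the converse uses (i) to rewrite $\kappa_p B_p h=-p^{(v+1)/(v-1)}h\in H^2$ for any $h\in H^2$. For (iii), a direct computation on Fourier modes gives $J\prr J=1-\prr$, and the functional equation $\rho_p(z)\rho_p(1-z)=1$ together with $\psi(1/v)=1-\psi(v)$ yields $\kappa_p(1/v)=\overline{\kappa_p(v)}$ on $S^1$, so $J\kappa_p J=\bar\kappa_p$. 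Combining, $T^*=\prr\bar\kappa_p(1-\prr)=J\,T\,J$ for $T=(1-\prr)\kappa_p\prr$, hence $\ker T^*=J(\ker T)$, which is (iii).

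The main obstacle is the exclusion of a singular inner factor in step (i): one must both verify the analytic extension of $F$ across $S^1\setminus\{1\}$ and establish the non-vanishing non-tangential limit $F\to p^{-1/2}$ at $v=1$ with enough care to rule out any atom of the singular measure at that point. Every remaining step is a direct algebraic identity or a standard Hardy-space manipulation.
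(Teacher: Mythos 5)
Your argument is correct and proves all three parts. Parts (ii) and (iii) run along essentially the same lines as the paper (the paper also reduces (ii) to the zero condition at the $x_p(n)$ and uses the Rudin factorization, and proves (iii) via $J\prr J=1-\prr$ and $J\kappa_p J=\kappa_p^*$). For part (i), however, your route is genuinely different. The paper constructs the Blaschke product explicitly: it applies the Euler sine product $e^Z-e^{-Z}=2Z\prod_{n\ge 1}(1+Z^2/(\pi^2 n^2))$ at $Z=\tfrac{(1-z)\log p}{2}$ and $Z=\tfrac{z\log p}{2}$, groups the poles $\beta_n,\beta_{-n}$ pairwise, conjugates the Blaschke factors to the half-plane via $\psi^{-1}$, and matches the resulting infinite products term by term, finally pinning down a global unimodular constant $\chi$ by evaluating $\rho_p$ at $z=-\tfrac 12$. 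You instead set $F:=-p^{(v+1)/(v-1)}/\kappa_p$, verify the elementary pointwise inequality giving $|F|\le 1$ on $\cU$ (which reduces to $(1-p^{-1})(1-p^{2\sigma-1})\ge 0$), observe that $F$ is inner with simple zeros precisely at $\{x_p(n)\}$, invoke the canonical factorization $F=cB_pS$, and exclude the singular factor $S$ by combining analytic continuation of $F$ across $S^1\setminus\{1\}$ with the nontangential limit $F(v)\to p^{-1/2}\neq 0$ at $v=1$. The paper's computation is purely explicit and never appeals to the structure theory of inner functions; your approach is shorter on calculation and more conceptual, at the price of invoking the Beurling/canonical factorization and the classical fact about radial limits of singular inner functions at atoms of their measure. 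One small simplification in your normalization step: once $S=1$ you have $F(0)=cB_p(0)$ with $B_p(0)>0$ by the Blaschke normalization, so $F(0)>0$ (clear from $\rho_p(-\tfrac 12)<0$) already forces $c=1$; the Weierstrass-product evaluation of $B_p(0)$ is a nice consistency check but is not logically required.
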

\begin{proof} One has 
\[
x_p(n)=\frac{4 \pi  n-i \log p}{4 \pi  n+3 i \log p}, \qquad \vert x_p(n)\vert^2
=\frac{\log ^2(p) + 16 \pi^2 n^2}{9\log ^2(p) + 16 \pi^2 n^2}=1-\frac{\log ^2(p)}{2 \pi ^2 n^2}+O(n^{-3})
\]
which ensures the condition $\sum (1-\vert x_p(n)\vert)<\infty $.\newline
(i)~For any $Z\in \C$ let us consider the Euler sine product formula 
\[
e^Z-e^{-Z}= 2 Z\prod_{n=1}^\infty \left(1+\frac{Z^2}{\pi^2 n^2}\right).
\]
 We apply this formula for $Z=\frac{1- z}{2}  \log p $ and $Z=\frac z2  \log p $ to obtain
\[
p^{\frac{1- z}{2}}-p^{-\frac{1- z}{2}}=(1-z)\log p \prod_{n=1}^\infty \left(1+\frac{\log ^2(p) (1-z)^2}{4\pi^2 n^2}\right) 
\]
\[
p^{\frac{z}{2}}-p^{-\frac{ z}{2}}=z\log p \prod_{n=1}^\infty \left(1+\frac{\log ^2(p) z^2}{4\pi^2 n^2}\right) 
\]
and 
\[
\frac{p^{\frac{1- z}{2}}-p^{-\frac{1- z}{2}}}{p^{\frac{z}{2}}-p^{-\frac{ z}{2}}}=
\frac{1-z}{z}\prod_{n=1}^\infty \frac{-\beta_n^2+(1-z)^2}{-\beta_n^2+z^2}
\]
where $\beta_n=\frac{2\pi i n}{\log p}$ are the poles of $\rho_p$.  Since $\bar \beta_n= -\beta_n=\beta_{-n}$, one has, for $n>0$
\[
\frac{\left(\bar \beta_n+z-1\right)}{ (\beta_n-z)}
\frac{\left(\bar \beta_{-n}+z-1\right)}{ (\beta_{-n}-z)}=\frac{-\beta_n^2+(1-z)^2}{-\beta_n^2+z^2}
\]
while $\beta_0=0$ so that 
\[
\frac{\left(\bar \beta_0+z-1\right)}{ (\beta_0-z)}=\frac{1-z}{z}.
\]
Conjugating the general term of the Blaschke product $\frac{\alpha-v}{1-\bar \alpha v}\times \frac{\vert \alpha\vert}{\alpha}$ by the transformation $\psi^{-1}:\C_-\to U$, $\psi^{-1}(z)=\frac{2 z+1}{2 z-3}$ gives, with $\beta=\psi(\alpha)$ the term
\[
\frac{ (\beta-z)}{\left(\bar \beta+z-1\right)}\times \chi,\qquad \chi= \left| \frac{2 \beta+1}{3-2 \beta}\right| \frac{\left(2 \bar \beta-3\right) }{(2 \beta+1)}.
\]
This shows that, up to the multiplication by a complex number $\chi$ of modulus $1$, one has 
\[
B_p(\psi^{-1}(z))^{-1}=\chi\  \frac{p^{\frac{1- z}{2}}-p^{-\frac{1- z}{2}}}{p^{\frac{z}{2}}-p^{-\frac{ z}{2}}}.
\]
Moreover  the implication
\[
\rho_p(z):=\frac{1-p^{z-1}}{1-p^{-z}}~\Longrightarrow~ p^{-z+\frac 12}\rho_p(z)= \frac{p^{\frac{1- z}{2}}-p^{-\frac{1- z}{2}}}{p^{\frac{z}{2}}-p^{-\frac{ z}{2}}}
\]
 gives $\rho_p(z)=\bar \chi\, B_p(\psi^{-1}(z))^{-1}p^{z-\frac 12}$. To determine $\chi$ one notes that the Blaschke product for the disk is normalized so that its value at $0$ is positive, and thus  the value of $B_p(\psi^{-1}(z))$ is positive at $z=\psi(0)=-\frac 12$. One has $\rho_p(-\frac 12)=\frac{1-p^{-\frac 32}}{1-p^{\frac 12}}<0$, which shows that $\chi=-1$.
\newline
(ii)~The kernel of $(1-\prr   )\kappa_p \prr   $ is the subspace of $H^2(\cU)$ of functions $f(z)$ such that $\kappa_p f\in H^2(\cU)$. This condition implies 
 $f(x_p(n))=0$ for all $n\in \Z$ and hence by \cite{Rudin} (Theorem 17.9) that $f/B_p \in H^2(\cU)$. Conversely, if $f=B_p\, h$ for some $h\in H^2(\cU)$ then by (i) the product $\kappa_p f$ is equal to $h\times \iota$ and hence belongs to  $H^2(\cU)$.\newline 
(iii)~One has $J(H^2(\cU))=(1-\prr   )L^2(S^1)$. The cokernel of $(1-\prr   )\kappa_p \prr   $ is the kernel of the adjoint $\prr   \kappa_p^*(1-\prr   )$ and, as in Remark \ref{functionfinftyrem}, the identity 
$
\rho_p(1-z)\rho_p(z)=1$ implies $\kappa_p(1/v)=1/\kappa_p(v)
$ for all $v\in S^1$. Thus for $v\in S^1$, since $\vert \kappa_p(v)\vert=1$ one has 
$
 \kappa_p^*(v)=1/\kappa_p(v)=\kappa_p(1/v).
 $
This entails 
\[
J\kappa_p J^{-1}=\kappa_p^*, \ \prr   \kappa_p^*(1-\prr   )=J(1-\prr   )\kappa_p \prr    J^{-1}
\]
which gives the required equality. \end{proof}

We end this section by writing a decomposition of $\rho_p$ (and $\kappa_p$) which is the analogue of the decomposition $\rho_\infty=\pi\rho_\infty+(\rho_\infty-\pi\omega_\infty)$ of Proposition \ref{propsum}, isolating the pole part. One has
\begin{equation}\label{poleomp}
\rho_p(z)=(p - 1)/(2 p)\coth \left(\frac{1}{2} z \log (p)\right)-p^{z-1}+\frac{p-1}{2 p}.
\end{equation}
 The first term in $\coth \left(\frac{1}{2} z \log (p)\right)$ is the sum of the simple pole parts while the last terms $-p^{z-1}+\frac{p-1}{2 p}$ define a bounded holomorphic function belonging to $H^\infty(\C_-)$. Since the function $\rho_p$ is not quasi-inner we cannot expect that the first term composed with the map $\psi$ would define  a continuous function on $S^1$.

\section{The product  $\rho_\infty\prod \rho_p$ is quasi-inner}\label{sectmain}

The main result of this section is the following

\begin{thm} \label{thmmain} The product $u=\rho_\infty\prod \rho_v$ of $m+1$ ratios of local $L$-factors $\rho_v(z)=\gamma_v(z)/\gamma_v(1-z)$ over a finite set of places of $\Q$ containing the archimedean place is a quasi-inner function relative to $\C_-=\{z\in\C\mid \Re(z)\leq \frac 12\}$.
\end{thm}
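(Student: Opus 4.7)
My plan is to reduce the $(m+1)$-fold product $u = \rho_\infty\prod_j \rho_{p_j}$ to a finite product of two-factor quasi-inner pieces, exploiting that the class of quasi-inner functions on a fixed half-plane is closed under multiplication: if $u,v\in L^\infty(\partial\Omega)$ both have modulus one and both give compact off-diagonal blocks, then the identity
\[
(1-\prr)uv\prr \;=\; (1-\prr)u\prr\,v\prr \;+\; (1-\prr)u(1-\prr)v\prr
\]
exhibits the off-diagonal block of $uv$ as a sum of two compact operators.

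The base case is to show that $\rho_\infty\rho_p$ is quasi-inner relative to $\C_-$. Following the template of Section \ref{sectomegainfty}, I would compute the negative Fourier coefficients of $\kappa\kappa_p:=(\rho_\infty\rho_p)\circ\psi$ by integrating along the contour $C_{R,m}$ of Figure \ref{cauchy1} and applying Cauchy's residue theorem. The poles inside $\C_-$ split into three groups: the simple poles of $\rho_\infty$ at $-2n$, $n\in\N$; the simple poles of $\rho_p$ at $2\pi i n/\log p$, $n\in\Z$, $n\neq 0$; and the origin, which becomes a double pole of the product. Each group contributes a convergent series of rank-one operators $\vert\xi\rangle\langle\eta\vert$ of the shape produced by Lemma \ref{uinftyqi1}; call these contributions $\cE_\infty$, $\cE_p$, $\cE_0$. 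The crucial point is that the vertical-line decay of $\rho_\infty$ furnished by the gamma-function estimates of §\ref{sectgam} dominates the merely bounded behaviour of $\rho_p$ on the left-shifted segments of $C_{R,m}$, so the contour may legitimately be sent to infinity. Thus $(1-\prr)\kappa\kappa_p\prr = \cE_\infty+\cE_p+\cE_0$ is compact.

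For $m\geq 2$, the obstacle is that each individual $\rho_{p_j}$ is \emph{not} quasi-inner by Fact \ref{factnq}, so every prime must be absorbed by a piece of archimedean decay; one cannot afford to reuse the single $\rho_\infty$ $m$ times. Here I would invoke Gauss's multiplication formula
\[
\Gamma(m w) \;=\; (2\pi)^{(1-m)/2}\,m^{m w-\frac12}\prod_{k=0}^{m-1}\Gamma\!\left(w+\tfrac{k}{m}\right),
\]
applied with $w=z/(2m)$ and $w=(1-z)/(2m)$, to factor the archimedean ratio as
\[
\rho_\infty(z) \;=\; \prod_{k=0}^{m-1}\sigma_k(z),
\]
where each $\sigma_k(z)$ is again a ratio $\gamma_{\infty,k}(z)/\gamma_{\infty,k}(1-z)$ of scaled gamma factors times an exponential prefactor. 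By construction each $\sigma_k$ has modulus one on the critical line, simple poles on a shifted arithmetic progression in $(-\infty,0]$, and the same vertical-line decay as $\rho_\infty$ itself. Pairing the $m$ primes one-to-one with the $m$ archimedean pieces, the residue computation of the base case goes through for each product $\sigma_{k-1}\rho_{p_k}$ verbatim — the pole set is simply a translate of the one above, and the decay of $\sigma_{k-1}$ still dominates $\rho_{p_k}$. Hence each $\sigma_{k-1}\rho_{p_k}$ is quasi-inner, and
\[
u \;=\; \rho_\infty\prod_{j=1}^{m}\rho_{p_j} \;=\; \prod_{k=1}^{m}\bigl(\sigma_{k-1}\,\rho_{p_k}\bigr)
\]
is quasi-inner as a finite product of quasi-inner functions.

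The main obstacle is the second stage: verifying that the gamma estimates of §\ref{sectgam} really do transfer to each Gauss-factor $\sigma_k$, uniformly in $m$, so that the contour-shifting and pole-summation arguments used for $\rho_\infty\rho_p$ produce absolutely convergent residue series for every pair $\sigma_{k-1}\rho_{p_k}$. Once this uniform decay is secured, the pairing strategy reduces the theorem to the two-factor case and to the multiplicative closure of the quasi-inner property, both of which are straightforward.
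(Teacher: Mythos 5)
Your proposal is essentially the same argument as the paper's: the base case $\rho_\infty\rho_p$ is proved by contour integration and residue calculus, the archimedean factor is split via Gauss's multiplication theorem into $m$ quasi-inner pieces, each prime is paired with one such piece, and multiplicative closure of the quasi-inner class finishes the proof. The only small imprecision is that each Gauss factor $\phi_{m,k}$ decays only like $|t|^{-1/(2m)}$ on the imaginary axis (Lemma \ref{uinftyfactor}(i)), weaker than the $|t|^{-1/2}$ of $\rho_\infty$ itself, and no uniformity in $m$ is needed since $m$ is fixed; this still suffices for compactness, yielding an infinitesimal of order $1/(2m)$ as in Theorem \ref{thmmainbis}.
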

 This result will be proven in several steps, first considering a single prime $p$ and showing in \S \ref{sectonep} that the function $\rho_\infty\times \rho_p$ is quasi-inner.  Then we shall use  Gauss multiplication formula of the Gamma function and introduce in \S \ref{sectfactor} for any integer $m>0$, a factorization  $\rho_\infty=\prod_0^{m-1}\phi_{m,k}$ as a product of $m$ quasi-inner functions having properties similar to $\rho_\infty$. Finally, in \S \ref{sectp} we will extend the results of \S \ref{sectonep} and show that the product $\phi_{m,k}\rho_p$ is a quasi-inner function  and then conclude the proof of Theorem \ref{thmmain}. 

\subsection{$\Gamma(z)$ on vertical lines}\label{sectgam}
We recall the second Binet formula for the function $\log(\Gamma(z))$ which is well defined for $\Re(z)>0$ 
\[
\log(\Gamma(z))=(z-\frac 12)\log z-z+\frac 12 \log(2\pi) + \int_0^\infty  \left(\frac{1}{\exp (t)-1}-\frac{1}{t}+\frac{1}{2}\right)e^{-tz}\frac{dt}{t}
\]
This formula shows that on the vertical line $L_a:=\{z\in\C\mid \Re(z)=a\}$ for $a>0$ one can apply Stirling's formula giving the asymptotic behavior
\[
\vert \Gamma(z)\vert \sim \sqrt{2\pi}\exp(\sigma(z)), \qquad \sigma(z):=\Re\left((z-\frac 12)\log z-z\right).
\]
Note  that  the validity of this formula for $a=1$ together with $\Gamma(z)=\frac 1z \Gamma(z+1)$ imply that Stirling's formula still applies for $a=0$.

\begin{lem}\label{lemgam} On the vertical line $L_a:=\{z\mid \Re(z)=a\}$ for $a\geq 0$ one has 
\begin{equation}\label{uinfactor3}
\vert \Gamma(a+it)\vert\sim \exp(\sigma_a(t)), \qquad 	\sigma_a(t)=(a-\frac 12)\, \log \vert t\vert-\frac \pi 2\, \vert t\vert+O(1).
	\end{equation}	
\end{lem}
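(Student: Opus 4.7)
The plan is to carry out the elementary but careful real-part computation that the paragraph preceding the lemma is already setting up. Binet's formula supplies, for $a>0$, the asymptotic
\[
|\Gamma(a+it)|\sim \sqrt{2\pi}\,\exp(\sigma(a+it)),\qquad \sigma(z)=\Re\!\left((z-\tfrac12)\log z-z\right),
\]
because the integral term in Binet's formula decays to $0$ as $|t|\to\infty$ along $L_a$ (its absolute value is dominated by $e^{-ta}$ times a function of $t$ that is bounded on $(0,\infty)$ and integrable at both endpoints). So the content of the lemma is simply to extract the dominant behavior of $\sigma(a+it)$ in the two unknowns $\log|t|$ and $|t|$.

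First I would split $\log(a+it)=\tfrac12\log(a^2+t^2)+i\,\arg(a+it)$ and write $(z-\tfrac12)=(a-\tfrac12)+it$. Taking real parts,
\[
\sigma(a+it)=(a-\tfrac12)\cdot\tfrac12\log(a^2+t^2)\;-\;t\cdot\arg(a+it)\;-\;a.
\]
Then I would plug in the two routine expansions for $|t|\to\infty$ with $a\geq 0$ fixed:
\[
\tfrac12\log(a^2+t^2)=\log|t|+O(t^{-2}),\qquad \arg(a+it)=\operatorname{sign}(t)\,\tfrac{\pi}{2}-\tfrac{a}{t}+O(t^{-3}).
\]
Multiplying the second expansion by $t$ gives $t\arg(a+it)=\tfrac{\pi}{2}|t|-a+O(|t|^{-2})$ (the sign of $t$ cancels correctly, which is the only bookkeeping point worth watching). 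Substitution then yields
\[
\sigma(a+it)=(a-\tfrac12)\log|t|-\tfrac{\pi}{2}|t|+O(1),
\]
which is exactly \eqref{uinfactor3}.

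Finally, to cover the boundary case $a=0$, which is not directly in the domain of Binet's formula, I would invoke the functional equation $\Gamma(it)=\Gamma(1+it)/(it)$. Taking the logarithm of the absolute value and using the already-established case $a=1$,
\[
\log|\Gamma(it)|=\log|\Gamma(1+it)|-\log|t|=\sigma_1(t)-\log|t|+O(1)=-\tfrac12\log|t|-\tfrac{\pi}{2}|t|+O(1),
\]
which is $\sigma_0(t)+O(1)$, as required.

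There is no genuine obstacle here: the lemma is an explicit computation of the real part of Stirling's leading term on a vertical line, and the only small pitfall is tracking the sign of $t$ in $\arg(a+it)$ so that $t\arg(a+it)$ produces $\tfrac{\pi}{2}|t|$ rather than $\tfrac{\pi}{2}t$. The appeal to the functional equation to pick up $a=0$ is standard and makes the result uniform on $a\geq 0$, which is what the later estimates on $\rho_\infty$ will use.
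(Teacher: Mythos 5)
Your proof is correct and follows essentially the same route as the paper's: start from Binet/Stirling, split $\log(a+it)$ into $\tfrac12\log(a^2+t^2)$ and $\arg(a+it)$, expand each for $|t|\to\infty$, and handle $a=0$ via $\Gamma(z)=\tfrac1z\Gamma(z+1)$ (which the paper does in the paragraph just before the lemma). The only difference is notational (the paper writes $z\in i\mathbb R$ and works with $t_a(z)=\Re((z+a-\tfrac12)\log(z+a))$, while you substitute $z=a+it$ directly), and your explicit expansion of $\arg(a+it)$ with the $\operatorname{sign}(t)$ bookkeeping is a slightly more careful rendering of the paper's ``$\operatorname{Arg}(z+a)=\pm\tfrac\pi2+O(|z|^{-1})$'' step.
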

\begin{proof} 
We use Stirling's formula and first consider the terms of the form 
\[
t_a(z):=\Re\left(\left(z+a-\frac 12\right)\log\left(z+a\right)\right)
\]
where  $a\geq 0$, $z\in i\R$ and where $\log$ is the branch which is real when the argument is real positive. Thus one has 
\[
 \log\left(z+a\right) =\frac 12 \log\left( \vert z\vert^2+a^2\right)+i \,{\rm Arg}\left(z+a\right).
\]
Moreover the equality
\[
\frac 12 \log\left( \vert z\vert^2+a^2\right)=\log \vert z\vert+O(\vert z\vert^{-2})
\]
 gives 
\[
t_a(z)=(a-\frac 12)\, \log \vert z\vert+O(\vert z\vert^{-2})+
\Re\left(\left(z+a\right)i \,{\rm Arg}\left(z+a\right)\right).
\]
 One also has
\[
\Re\left(\left(z+a\right)i \,{\rm Arg}\left(z+a\right)\right)=-\frac \pi 2\, \vert z\vert+O(1)
\]
since for $z=\pm it$, $t\to +\infty$:  ${\rm Arg}\left(z+a\right)=\pm\frac\pi 2+ O(\vert z\vert^{-1})$. Thus  \eqref{uinfactor3} follows. \end{proof}

\subsection{The product $\rho_\infty\times \rho_p$}\label{sectonep}

Next lemma refines Lemma~\ref{lemgam}  providing a formula for  $\vert \rho_\infty(it)\vert$. 
\begin{lem}\label{uinftyonim} For any  $t\in \R$ one has
\begin{equation}\label{uinfim}
	\vert \rho_\infty(it)\vert =\vert t\vert^{-\frac 12}\left(2\pi \coth(\pi \vert t\vert/2) \right)^{\frac 12}.
\end{equation}	
\end{lem}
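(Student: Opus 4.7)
The plan is to use the reflection and translation identities for the Gamma function to compute $|\rho_\infty(it)|^2$ directly. Starting from the definition \eqref{archimfact}, the powers of $\pi$ combine into a single factor since $\pi^{-it/2}/\pi^{-(1-it)/2} = \pi^{1/2 - it}$, whose modulus equals $\pi^{1/2}$. Therefore
\begin{equation*}
|\rho_\infty(it)|^2 = \pi \cdot \frac{|\Gamma(it/2)|^2}{|\Gamma((1-it)/2)|^2},
\end{equation*}
and everything reduces to computing the two Gamma moduli on the right.

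For the numerator, I would combine $|\Gamma(it/2)|^2 = \Gamma(it/2)\Gamma(-it/2)$ with the translation identity $\Gamma(-it/2) = \Gamma(1-it/2)/(-it/2)$, and then apply the reflection formula $\Gamma(w)\Gamma(1-w) = \pi/\sin(\pi w)$ at $w = it/2$. Using $\sin(i\pi t/2) = i\sinh(\pi t/2)$ this yields
\begin{equation*}
|\Gamma(it/2)|^2 = \frac{2\pi}{|t|\,\sinh(\pi|t|/2)}.
\end{equation*}
For the denominator I would use the symmetric form of the reflection formula, namely $\Gamma(\tfrac12 - s)\Gamma(\tfrac12 + s) = \pi/\cos(\pi s)$ at $s = it/2$; since $\cos(i\pi t/2) = \cosh(\pi t/2)$ this gives
\begin{equation*}
|\Gamma((1-it)/2)|^2 = \frac{\pi}{\cosh(\pi|t|/2)}.
\end{equation*}

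Substituting the two formulas into the expression for $|\rho_\infty(it)|^2$, the factor $\pi$ cancels, the ratio $\cosh/\sinh$ produces $\coth(\pi|t|/2)$, and one is left with $2\pi\coth(\pi|t|/2)/|t|$. Taking a square root gives exactly \eqref{uinfim}. There is no real obstacle here: it is a direct identity, and the only small point to handle is the sign of $t$, which is settled by observing that $\rho_\infty(-it) = \overline{\rho_\infty(it)}$ (both $\pi^{-z/2}$ and $\Gamma(z/2)$ are real on the real axis, hence satisfy Schwarz reflection), so the modulus is even in $t$ and the formula extends from $t>0$ to all $t \in \mathbb R$.
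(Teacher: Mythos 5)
Your proof is correct and uses essentially the same ingredients as the paper: the conjugation symmetry $\Gamma(\bar z)=\overline{\Gamma(z)}$, the reflection formula, and the recursion $\Gamma(z+1)=z\Gamma(z)$ applied at $-it/2$. The only cosmetic difference is that you evaluate $|\Gamma(it/2)|^2$ and $|\Gamma((1-it)/2)|^2$ separately (using the symmetric form $\Gamma(\tfrac12-s)\Gamma(\tfrac12+s)=\pi/\cos(\pi s)$ for the latter, which is just the reflection formula at $w=\tfrac12+s$), whereas the paper manipulates the full ratio of four Gamma values at once.
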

\begin{proof} We use the equality $\Gamma(\bar z)=\overline{\Gamma(z)}$ to obtain
\[
\vert \rho_\infty(it)\vert^2=\pi \Gamma(it/2)\Gamma(-it/2)/(\Gamma((1-it)/2)\Gamma((1+it)/2)).
\]
 Then the formula of complements together with $(-it/2)\Gamma(-it/2)=\Gamma(1-it/2)$, gives
\[
\vert \rho_\infty(it)\vert^2=\pi \frac{1}{(-it/2)} \frac{\sin(\pi((1+it)/2)}{\sin(\pi(it/2))}=(2\pi/t) \coth(\pi t/2).
\]
Note that this is in agreement  with the presence of the simple pole at $t=0$ with residue equal to $2$.\end{proof}

\begin{thm} \label{thmquasiinner} (i)~The function $\rho_\infty(z) \rho_p(z)$ is quasi-inner relative to the upper half plane. The off diagonal part is an infinitesimal of order $\frac 12$.\newline
(ii)~The off diagonal part  for the function $\rho_\infty(z) \rho_p(z)$ is the  infinitesimal in $L^2(S^1)$ which is the sum of three terms $(1-\prr   )\kappa \kappa_p \prr   =\cE_\infty+\cE_p+\cE_0$ where, with the notations of \eqref{uinftyoff1} and \eqref{offdiag2bis},
\begin{equation}\label{uinftypoff1}
\cE_\infty=\sum_{n=1}^\infty (-1)^{n}\frac{  2\pi ^{2 n+\frac 12}(1-p^{-(2n+1)}) }{(4 n+1)(p^{2n}-1) \Gamma (n+1) \Gamma \left(n+\frac{1}{2}\right)}\vert \xi_n\rangle \langle \eta_n\vert	
\end{equation}
\begin{equation}\label{uinftypoff2}
\cE_p=
\frac{1-p}{p} U_-VDIV^*U_+^*, \ D\dirac_0=0, \quad D\dirac_n=\omega_\infty\left(\frac{2\pi i n}{\log p}\right)\dirac_n, \quad \forall n \neq 0.
\end{equation}
Moreover $\cE_0$ is an operator of finite rank.
\end{thm}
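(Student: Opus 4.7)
The approach follows the template of Sections \ref{sectomegainfty} and \ref{sectsinglep}: compute the negative Fourier coefficients
\[
a_{-k} = \frac{1}{2\pi i}\int_{S^1}\kappa(v)\kappa_p(v)v^{k-1}\,dv
= -\frac{8}{2\pi i}\int_{\partial\C_-}\rho_\infty(z)\rho_p(z)\left(\tfrac{2z+1}{2z-3}\right)^{\!k-1}(2z-3)^{-2}\,dz
\]
by closing the contour to the left with the rectangular path $C_{R,m}$, applying Cauchy's residue theorem, and reading off the operator $(1-\prr)\kappa\kappa_p\prr$ via Lemma \ref{uinftyqi1}. To justify the contour manipulation one takes $R=(2m_p+1)\pi/\log p$, as in Section \ref{sectsinglep}, which simultaneously ensures $|\rho_p|\leq 1$ on the top horizontal segment (by the explicit formula derived there) and lies in the regime $\Im(z)>7$ where $|\rho_\infty|\leq 1$ in $\C_-$ (Section \ref{sectomegainfty}); the $(2z-3)^{-2}$ factor then bounds the integral there by $O(1/R)$. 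On the left vertical edge the functional equation \eqref{archimfact1} gives $|\rho_\infty|\leq\epsilon(m)$ with rapid decay, while Lemma \ref{functionf2}(iii) yields $|\rho_p|=O(p^{-2m})$, so that contribution vanishes as $m\to\infty$.

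Inside the closed contour the integrand has three disjoint families of poles: simple poles of $\rho_\infty$ at $z=-2n$ ($n\geq 1$), simple poles of $\rho_p$ at $z=2\pi in/\log p$ ($n\in\Z\setminus\{0\}$), and a double pole at $z=0$ coming from both factors. The residue sum over the first family reproduces a weighted version of the series of Theorem \ref{thmkappa}, with each term multiplied by $\rho_p(-2n)=-(1-p^{-(2n+1)})/(p^{2n}-1)$; normalizing the rank-one kernels via $\|\xi_{x_n}\|\|\eta_{x_n}\|=(4n+3)^2/(8(4n+1))$ then gives $\cE_\infty$ as in \eqref{uinftypoff1}. The residue sum over the second family gives a weighted version of \eqref{offdiag2}, where each term is multiplied by $\rho_\infty(2\pi in/\log p)$; reassembling this via the factorization of Lemma \ref{functionf2descr} identifies it with $\frac{1-p}{p}U_-VDIV^*U_+^*$, the diagonal twist $D$ recording precisely these weights and vanishing at $n=0$. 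The double pole at $0$ contributes a finite-rank operator $\cE_0$: differentiating the Laurent expansions of $\rho_\infty\rho_p$ at the origin produces a rank-one piece proportional to $(1-\prr)f_{-1/3}\prr$ (where $-1/3=\psi^{-1}(0)$) together with a piece involving $\partial_x(1-\prr)f_x\prr|_{x=-1/3}$, the whole being of rank at most $3$.

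The remaining and most delicate point, which I expect to be the main obstacle, is the assertion that $(1-\prr)\kappa\kappa_p\prr$ is an infinitesimal of order $\tfrac12$. The coefficients of $\cE_\infty$ decay super-polynomially (bounded by $2^{-\sqrt k}$ exactly as in the proof of Theorem \ref{thmquasiinner0}), and $\cE_0$ has finite rank, so neither of these pieces contributes anything slower than order $\tfrac12$. The decisive term is
\[
\cE_p = \frac{1-p}{p}\,U_-U B D I B U^*U_+^*,
\]
whose characteristic values coincide, up to bounded factors above and below, with those of the diagonal operator $D$: by Lemma \ref{functionf2scalprod1} the operator $B$ is bounded with bounded inverse, $U$ is an isometry (Lemma \ref{functionf2isom}), and $I$ is unitary. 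By Lemma \ref{uinftyonim} one has $|\rho_\infty(2\pi in/\log p)|\sim\sqrt{2(\log p)/|n|}$ as $|n|\to\infty$, so the diagonal entries of $D$ rearranged in decreasing order satisfy $\mu_n(D)=O(n^{-1/2})$. This yields $\mu_n((1-\prr)\kappa\kappa_p\prr)=O(n^{-1/2})$, which simultaneously proves compactness (hence quasi-innerness in (i)) and the order $\tfrac12$ statement.
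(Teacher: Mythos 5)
Your proposal is correct and follows essentially the same route as the paper: contour integration over $C_{R,m}$ with $R=(2m+1)\pi/\log p$, splitting the residue sum into the three families of poles to get $\cE_\infty,\cE_p,\cE_0$, and reading off the order $\tfrac12$ of $\cE_p$ from Lemma~\ref{uinftyonim} combined with the boundedness (with bounded inverse) of $B$ and the isometric/unitary nature of $U$, $U_\pm$, $I$. The only small deviation is that the paper pins down $\cE_0$ to rank exactly $2$ (from the double-pole Laurent data it writes the symbol of the Hankel piece as $\alpha f_{-1/3}+(-\tfrac13)\beta f_{-1/3}^2$, a rational function with a single order-two pole), while your ``rank at most $3$'' is a slightly looser but still adequate bound.
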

\begin{proof} We compute the Fourier coefficients of the function $\kappa_{p,\infty}(v):=\left(\rho_\infty\rho_p\right)(\frac 12+\frac{v+1}{v-1})$:  
\[
a^{(p,\infty)}_{-k}=\frac{1}{2\pi i}\int_{S^1} \kappa_{p,\infty}(v)v^{k-1}dv=\frac{1}{2\pi i}\int_{S^1} \rho_\infty(z)\rho_p(z)\psi^{-1}(z)^{k-1}d\psi^{-1}(z)
\]
 We express these coefficients as the sum of residues on the set $\cR$ of poles: 
\[
a^{(p,\infty)}_{-k}=\sum_\cR {\rm Res}\left( \rho_\infty(z)\rho_p(z)\left(\frac{2 z+1}{2 z-3}\right)^{k-1} \frac{(-8)}{(2 z-3)^2}\right).
\]
To justify this step we use the same contour as in Section \ref{sectomegainfty} (Figure \ref{cauchy1}) and the same choice  $R=\frac{(2m+1)\pi}{\log p}$ as in the proof of Lemma \ref{functionf2}, which ensures that the restriction of $\rho_p$ to the segment $(\frac 12+i R, \ \frac 12-2m+i R)$ and its complex conjugate fulfill  $\vert \rho_p(z)\vert \leq 1$. Thus one gets the same control  as in Section \ref{sectomegainfty} of the integral on the segment $(\frac 12+i R, \ \frac 12-2m+i R)$ by $\frac {\pi}{ 8 R}$. The poles are of three kinds. We have the non-zero poles of $\rho_\infty$, the non-zero poles of $\rho_p$, and the double pole at $z=0$. The residues are, for the simple poles, multiplied by the value of the other factor at the point. For the non-zero poles of $\rho_\infty$ this multiplies the residue by $\rho_p(-2n)=\frac{1-p^{-(2n+1)}}{1-p^{2n}}$  which does not alter the strong convergence of \eqref{uinftyoff1} in Theorem \ref{thmkappa} and gives \eqref{uinftypoff1}. We let $\cE_p$ be the contribution of the non-zero poles of $\rho_p$. One multiplies the residue by $\rho_\infty(\frac{2\pi i n}{\log p})$,  and by \eqref{offdiag2} 
\[
\cE_p=8(1-\frac{1}{p}) \log p\ \sum_{\Z\setminus \{0\}  }\frac{\rho_\infty(\frac{2\pi i n}{\log p})}{(4 \pi  n+3 i \log p)^2}\vert \xi_{x_p(n)}\rangle \langle \eta_{x_p(n)}\vert	=
\]
\begin{multline*}
=\frac{1-p}{p}\ \sum_{\Z\setminus \{0\}} \rho_\infty(\frac{2\pi i n}{\log p})\vert U_-(\zeta_n)	\rangle \langle U_+(\zeta_{-n})\vert
=\\=\frac{1-p}{p}\ \sum_{\Z\setminus \{0\}} \rho_\infty(\frac{2\pi i n}{\log p})\vert U_-(V(\dirac_n))	\rangle \langle U_+(V(\dirac_{-n}))\vert
\end{multline*}
since $V(\dirac_n)=\zeta_n$. Thus  \eqref{uinftypoff2} follows using 
\begin{multline*}
DI= \sum_{\Z\setminus \{0\}} \rho_\infty(\frac{2\pi i n}{\log p})  \vert \dirac_n\rangle \langle \dirac_{-n}\vert, \\ 
U_-VDIV^*U_+^*=\sum_{\Z\setminus \{0\}} \rho_\infty(\frac{2\pi i n}{\log p}) \vert U_-(V(\dirac_n))	\rangle \langle U_+(V(\dirac_{-n}))\vert
\end{multline*}
as in  Lemma \ref{functionf2descr}.  By \eqref{uinfim} this shows that the operator $\cE_p$ associated to this contribution of poles is an infinitesimal of order $\frac 12$. The contribution of the double pole at $z=0$ gives an operator of finite rank. The expansion  at $z=0$ gives a double pole of the form
$
\frac{16 (-1)^k 3^{-k-1} (p-1)}{p  \log (p)}z^{-2}$
and a simple pole with residue 
\[
\frac{8 (-1)^k 3^{-k-2}}{p \log (p)}\left(3 (p-3) \log (p)-(p-1) \left(-16 k+3 \gamma +8+6 \log (\pi )-3 \frac{\Gamma '}{\Gamma }\left(\frac{1}{2}\right)\right)\right).
\]
The dependence in $k$ is of the form $\alpha x^{k-1}+(k-1)\beta  x^{k-1} $, where $x=-\frac 13$, $\beta=-\frac{128  (p-1)}{27 p \log (p)}$ and 
\[
\alpha=\frac{8}{27 p \log (p)}\left((p-1) \left(3 \gamma -8+6 \log (\pi )-3 \frac{\Gamma '}{\Gamma }\left(\frac{1}{2}\right)\right)-3 (p-3) \log (p)\right). 
\]
 The contribution of the terms in $\alpha x^{k-1}$ gives  $\alpha(1-\prr   )f_x \prr   $, where $f_x(z):=z^{-1}(1-xz^{-1})^{-1}$ \ie  the rank one operator of \eqref{rankone}. The contribution of the terms  $\beta(k-1) x^{k-1}$ gives $x\beta(1-\prr   )f_x^2 \prr   $	since  
$\sum_{k=1}^\infty (k-1) x^{k-1}z^{-k}=xf_x^2$. Thus this contribution gives an operator of rank $2$.
\end{proof}

A striking property of the decomposition $(1-\prr   )\kappa \kappa_p \prr   =\cE_\infty+\cE_p+\cE_0$  is that, except for the contribution of the double pole at $0$, it splits as a sum of terms corresponding to the two factors $\kappa \kappa_p$ and that the contribution of each pole is simply multiplied by the value of the other term at that point. This fact follows from  Cauchy formula, but one may wonder about its compatibility with the formula for the off diagonal term in a product
\begin{multline*}
\left(
\begin{array}{cc}
 (\kappa _p)_{1,1} & (\kappa _p)_{1,2} \\
 (\kappa _p)_{2,1} & (\kappa _p)_{2,2} \\
\end{array}
\right)\left(
\begin{array}{cc}
 \kappa _{1,1} & \kappa _{1,2} \\
 \kappa _{2,1} & \kappa _{2,2} \\
\end{array}\right)=\\=\left(
\begin{array}{cc}
 \left(\kappa _p\right)_{1,1} \kappa _{1,1}+ \left(\kappa _p\right)_{1,2}\kappa _{2,1} &  \left(\kappa _p\right)_{1,1}\kappa _{1,2}+ \left(\kappa _p\right)_{1,2}\kappa _{2,2} \\
 \left(\kappa _p\right)_{2,1} \kappa _{1,1}+ \left(\kappa _p\right)_{2,2}\kappa _{2,1} &  \left(\kappa _p\right)_{2,1}\kappa _{1,2}+ \left(\kappa _p\right)_{2,2}\kappa _{2,2} \\
\end{array}
\right)
\end{multline*}
which displays the term $(1-\prr   )\kappa_p \kappa \prr   =\left(\kappa _p\right)_{2,1} \kappa _{1,1}+ \left(\kappa _p\right)_{2,2}\kappa _{2,1}$. To understand algebraically  the splitting as a sum we work with a general product of two terms each being a sum of an element of $H^\infty(\cU)$ and finitely many scalar multiples of $f_{x_j}$, $x_j\in \cU$, \ie 
\begin{equation}\label{termdec}
	k(v)=h(v)+\sum a_j f_{x_j}(v), \quad h\in H^\infty(\cU), \quad x_j\in \cU.
	\end{equation}
	For such $k$ one derives from \eqref{rankone} 
	\[
	(1-\prr  )k \prr  =\sum a_j \vert \xi_{x_j}\rangle \langle \eta_{x_j}\vert,  \qquad \xi_x=z^{-1}(1-xz^{-1})^{-1}, \quad \eta_x=(1-\overline{x}z)^{-1}.
	\]
	When one considers the product $k=k_1k_2$ of two functions of the form \eqref{termdec} and one assumes that the set of poles of $k_j$ in $\cU$ are disjoint, then the poles of the product are simply the union of the poles of each term. Thus one obtains a decomposition of $k$ of the form \eqref{termdec} where the pole part is simply the sum of the pole parts of the $k_j$ multiplied by the value of the other factor at the pole. This gives for $(1-\prr  )k \prr $ a formula  as a sum of the terms involved in $(1-\prr  )k_j \prr $ multiplied by the value of the other factor at the pole. At the algebraic level the key equality is that 
	\[
	\langle \eta_x\mid \xi\rangle =\xi(x), \ \forall \xi \in H^2(\cU)~\Longrightarrow~
	\langle \eta_x\vert \circ h\circ \prr=h(x)\langle \eta_x\vert\circ \prr, \quad \forall h\in H^\infty(\cU)
	\]
	while for $x\neq y$ one has \[
	\frac{1}{(z-x) (z-y)}=\frac{1}{(y-x) (z-y)}+\frac{1}{(x-y) (z-x)}\Rightarrow f_x f_y=f_x(y) f_y+f_y(x) f_x.
	\]
 By \eqref{pikap} and  \eqref{poleomp} the functions $\kappa$ and $\kappa_p$ are of the form \eqref{termdec} but the sums involved  are infinite.
 
 Next proposition  provides an independent reason why  the function $\kappa(v)\kappa_p(v)$ is quasi-inner.
 
 \begin{prop}\label{propsum1} The function $\kappa(v)\kappa_p(v)$ belongs to $C(S^1)+H^\infty(\cU)$. \end{prop}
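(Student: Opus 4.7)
The plan is to deduce this statement from Theorem~\ref{thmquasiinner} by invoking Hartman's theorem, which characterizes $H^\infty(\cU)+C(S^1)$ as the class of $L^\infty(S^1)$ symbols whose Hankel operator is compact (see \cite{MR}). The introduction already records that $(1-\prr)u\prr$ differs from $JH_u$ only by a finite rank operator, so the compactness of $(1-\prr)\kappa\kappa_p\prr$ established in Theorem~\ref{thmquasiinner} transfers to the Hankel operator $H_{\kappa\kappa_p}$; Hartman's theorem then yields $\kappa\kappa_p\in H^\infty(\cU)+C(S^1)$.

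For a more explicit and self-contained construction I would split $\kappa=\pi\kappa+k$ with $\pi\kappa\in C^\infty(S^1)$ and $k\in H^\infty(\cU)$ by Proposition~\ref{propsum}, and treat the two summands of $\kappa\kappa_p=\pi\kappa\cdot\kappa_p+k\cdot\kappa_p$ separately. The first summand lies in $C(S^1)$: every term of the series \eqref{pikap} defining $\pi\kappa(v)$ carries the factor $v-1$, hence $\pi\kappa(1)=0$, while $\kappa_p$ is bounded on $S^1$ and smooth on $S^1\setminus\{1\}$ since its poles $x_p(n)$ lie strictly inside $\cU$ and cluster only at $v=1$; the product is thus smooth off $v=1$ and tends to $0$ there. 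For the second summand one writes $H_{k\kappa_p}=H_{\kappa\kappa_p}-H_{\pi\kappa\cdot\kappa_p}$ as a difference of two compact Hankel operators (the first compact by Theorem~\ref{thmquasiinner}, the second because its symbol is continuous) and applies Hartman's theorem to conclude $k\cdot\kappa_p\in H^\infty(\cU)+C(S^1)$. Combining the two pieces gives the claim.

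The main obstacle is that the constructive approach used in Proposition~\ref{propsum}, namely isolating a Mittag-Leffler-type pole sum of $\rho_\infty\rho_p$ and showing the remainder lies in $H^\infty(\C_-)$, does not transfer cleanly here: by Lemma~\ref{uinftyonim} the residues at the $\rho_p$-poles $2\pi im/\log p$ decay only as $|m|^{-1/2}$, too slowly to guarantee that the naive pole sum converges to a continuous function on the critical line after composition with $\psi$. The Hartman route circumvents this difficulty by exploiting the compactness already extracted in Theorem~\ref{thmquasiinner}, where the $|m|^{-1/2}$ decay combines with the $\ell^2$ structure of $\cE_p=\frac{1-p}{p}U_-VDIV^*U_+^*$ (the diagonal $D$ being a compact operator since its entries $\rho_\infty(2\pi im/\log p)$ tend to zero) to yield compactness without any need for absolute summability.
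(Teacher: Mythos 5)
Your Hartman-theorem argument is logically correct: Theorem~\ref{thmquasiinner} gives compactness of $(1-\prr)\kappa\kappa_p\prr$, the introduction records that this is equivalent (modulo finite rank) to compactness of the Hankel operator $H_{\kappa\kappa_p}$, and Hartman's theorem in \cite{MR} then yields $\kappa\kappa_p\in H^\infty(\cU)+C(S^1)$. However, this route inverts the paper's intent: Proposition~\ref{propsum1} is explicitly offered as ``an independent reason why the function $\kappa\kappa_p$ is quasi-inner,'' i.e.\ as a second proof of the compactness, so deriving it from Theorem~\ref{thmquasiinner} is circular with respect to that aim, even if not with respect to the literal statement. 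Your second route (splitting $\kappa=\pi\kappa+k$ and showing $\pi\kappa\cdot\kappa_p\in C(S^1)$) is a genuine observation, but its remaining step $k\kappa_p\in H^\infty+C$ again goes through $H_{\kappa\kappa_p}$ compact plus Hartman, so it inherits the same dependence.

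The substantive error is the claim that the constructive Mittag--Leffler approach ``does not transfer'' because the residues at the $\rho_p$-poles decay only like $|n|^{-1/2}$. The paper's proof does exactly this pole-sum construction and shows it works. For fixed $z$ on the critical line the terms are $O(|n|^{-1/2})\cdot O(|n|^{-1})=O(|n|^{-3/2})$, so the series \eqref{phifunction} for $\phi$ converges absolutely and defines a smooth function on $\partial\C_-$; what remains for continuity of $\phi\circ\psi$ at $v=1$ is not fast decay of the residues but a vanishing estimate at infinity, namely
\[
\bigl|\phi\bigl(\tfrac12+is\bigr)\bigr|=O\bigl(|s|^{-1/2}\log|s|\bigr)\longrightarrow 0,
\]
which the paper proves by splitting the sum into $n<0$ and $n>0$ and comparing with integrals. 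Thus the ``naive'' pole sum composed with $\psi$ really is in $C(S^1)$ (smooth off $v=1$, continuous at $v=1$ with value $0$), and the remainder is shown to lie in $H^\infty(\cU)$ by the same Cauchy-contour argument as in Proposition~\ref{propsum}, with H\"older's inequality replacing the crude bounds to control $|\phi|$ on the contour $C_{R,m}$. In short, the obstacle you identified is apparent rather than real, and the ingredient you are missing is the $O(|s|^{-1/2}\log|s|)$ decay of the pole sum on the critical line.
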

\begin{proof} The delicate part of the pole contribution for the function $k(v)=\kappa(v)\kappa_p(v)$
comes from the poles of $\kappa_p$. When considered in $\partial\C_-$ this contribution takes the form
\begin{equation}\label{phifunction}
\phi(z):=\frac{p-1}{p  \log p}\sum_{\Z\setminus \{0\}} \rho_\infty(\frac{2\pi i n}{\log p}) \frac{1}{z-\frac{2\pi i n}{\log p}}.
\end{equation}
We consider the restriction of $\phi$ to the critical line $\partial\C_-$ and extend it by the value $\phi(\infty):=0$ as a function on the projective line $\P^1(\R)$. By \eqref{uinfim} one has  $\vert \rho_\infty(\frac{2\pi i n}{\log p})\vert=O(n^{-1/2})$ and this suffices to show that the series defining $\phi(z)$ is absolutely convergent. The same holds for the series of derivatives giving $\partial_z^k\phi(z)$, so that $\phi$ is smooth on $\partial\C_-$. Let us show that for $s\to \pm \infty$ one has 
\[
\vert \phi(\frac 12+is)\vert =O(\vert s\vert^{-\frac 12} \log \vert s\vert).
\]
Since $\vert \rho_\infty(\frac{2\pi i n}{\log p})\vert=O(n^{-1/2})$ one has, for some $C<\infty$, and with $b=\frac{2\pi  }{\log p}$
\[
\vert \phi(\frac 12+is)\vert\leq C \sum_{\Z\setminus \{0\}}  \vert n\vert^{-\frac 12}\left\vert \frac 12+is-\frac{2\pi i n}{\log p}\right\vert^{-1}\leq 3C \sum_{\Z\setminus \{0\}}  \vert n\vert^{-\frac 12}(1+\vert s- n b \vert )^{-1}.
\]
We assume $s>0$, then  the sum over negative $n$ is $O(s^{-\frac 12})$ since
\[
\sum_{n<0}\vert n\vert^{-\frac 12}(1+\vert s-bn \vert )^{-1}
\leq \int_{-\infty}^0\vert u\vert^{-\frac 12}(1+\vert s-bu \vert )^{-1}du=\frac{\pi}{\sqrt{b(1+s)}}.
\]
To estimate  the sum over positive $n$ one replaces it by the integral which introduces an error in $O(s^{-\frac 12})$ and one uses the equalities
\[
\int_0^\infty \vert u\vert^{-\frac 12}(1+\vert s-bu \vert )^{-1}du=(sb)^{-1/2} \int_0^\infty y^{-1/2} (1/s+\vert 1-y \vert )^{-1}dy
\]
and 
\[
\int_0^\infty y^{-1/2} (1/s+\vert 1-y \vert )^{-1}dy=2 \log s+O(1).
\]
Thus we have shown that the function $\phi$ is continuous on the projective line $\P^1(\R)$ and hence that $\sigma=\phi\circ \psi\in C(S^1)$. In fact this function is smooth except at $v=1$      
where it is continuous but not differentiable and satisfies $\sigma(1)=0$, $\sigma(e^{i\theta})=O(\vert \theta \vert^{1/2}\vert\log \vert \theta \vert\vert)$. 
The contribution of the non-zero poles of $\rho_\infty$ is of the form
\begin{multline*}
\phi_1(z):=\sum_{n>0}  \rho_p(-2n)
\frac{\sqrt{\pi } 2 \pi ^{2 n} (-1)^n}{\Gamma (n+1) \Gamma \left(n+\frac{1}{2}\right)}\frac{1}{z+2n}=\\=\sum_1^\infty \frac{ (-1)^{n} 2\pi ^{2 n+\frac 12}(1-p^{-(2n+1)}) }{(4 n+1)(p^{2n}-1) \Gamma (n+1) \Gamma \left(n+\frac{1}{2}\right)}\frac{1}{z+2n}.
\end{multline*}
 As in Proposition \ref{propsum} one proves that  $\phi_1\circ \psi\in C^\infty(S^1)$. The contribution of the double pole at $z=0$ is of the form 
\[
\phi_2(z):=\frac{2 (p-1)}{p z^2 \log (p)}+\frac{(p-3) \log (p)-(p-1) \left(\gamma +2 \log (\pi )-\frac{\Gamma '}{\Gamma }\left(\frac{1}{2}\right)\right)}{p z \log (p)}
\]
thus  $\phi_2\circ \psi \in C^\infty(S^1)$.  The sum of the polar parts $\pi\rho^{p,\infty}:=\phi+\phi_1+\phi_2$ is in $C(S^1)$ after composition with $\psi$. The absolute value of $\phi_1(z)$ (and of $\phi_2(z)$) on the contour $C_{R,m}$ of Section \ref{sectomegainfty} (Figure \ref{cauchy1}) with  $R=\frac{(2m+1)\pi}{\log p}$ is bounded independently of $m$ since $\vert z+2n\vert\geq \frac 12$, $\forall n\geq 1$ for $z\in C_{R,m}$.  To obtain a uniform bound for $\vert\phi(z)\vert$ on $C_{R,m}$ one uses  Holder's inequality
\[
\sum x_ny_n\leq \left(\sum x_n^p\right)^{\frac 1p}\left( \sum y_n^q \right)^{\frac 1q}, \quad 1<p<\infty, \quad 1<q<\infty, \quad \frac 1p + \frac 1q =1,  
\]
applied to $x_n=\vert\rho_\infty(\frac{2\pi i n}{\log p})\vert $ and $y_n=\vert \frac{1}{z-\frac{2\pi i n}{\log p}} \vert$ with $p>2$, $q>1$ (for instance $p=3$ and $q=\frac 32$) while one has a uniform bound independent of $m$ of the form 
\[
\sum_n \left\vert \frac{1}{z-\frac{2\pi i n}{\log p}} \right\vert^q\leq C \qquad\forall z\in C_{R,m}.
\]
 This ensures, due to the factor $\frac{(-8)}{(2 z-3)^2}$, that one can apply  Cauchy formula as earlier on, to obtain the  Fourier coefficients with negative index for $(\pi\rho^{p,\infty})\circ \psi$. It follows from the equality of the polar parts that they are the same as for $\kappa \kappa_\infty$ and hence, as in the proof of Proposition \ref{propsum}, one has $\kappa \kappa_\infty-(\pi\rho^{p,\infty})\circ \psi\in H^{\infty}(\cU)$.\end{proof}

\subsection{Factorization of $\rho_\infty$ }\label{sectfactor}

In the general case of a finite product $\rho_\infty(z)\prod \rho_p(s)$, each $\rho_p$ will contribute to the Cauchy formula with its poles $\frac{2\pi i n}{\log p}$ and the residues will be then multiplied by $\rho_\infty(\frac{2\pi i n}{\log p})\prod_{q\neq p}\rho_q(\frac{2\pi i n}{\log p})$. This creates a problem when $\frac{2\pi i n}{\log p}$ is close to a pole of some $\rho_q$.  To handle this difficulty we construct a factorization of $\rho_\infty$ as a product of quasi inner functions which will then be distributed among the factors $\rho_p(s)$.

\begin{lem}\label{uinftyfactor} Let $m\in \N$ and for any integer  $k\in \{0,m-1\}$ let
\begin{equation}\label{uinfactor1}
	\gamma_{m,k}(z):=\Gamma\left(\frac{z}{2m}+\frac{k}{m}\right), \quad \phi_{m,k}(z):=\gamma_{m,k}(z)/\gamma_{m,k}(1-z).
	\end{equation}	
	\begin{enumerate}[(i)]
	\item One has: $\vert \phi_{m,k}(it)\vert=O(\vert t\vert^{-\frac{1}{2m}})$ when $\vert t\vert\to \infty$.
	\item One has: $\vert \phi_{m,k}(\frac 12+is)\vert=1$ for $s\in \R$, and the following factorization formula holds
	\begin{equation}\label{uinfactor2}
	\prod_{k=0}^{m-1}\phi_{m,k}(z)= \left(\frac m\pi\right)^{\frac 12-z}\rho_\infty(z)
	\end{equation} 
	\end{enumerate}
\end{lem}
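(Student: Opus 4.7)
The plan is to prove the two parts separately, both reducing to classical identities for the Gamma function. Part (ii) is the algebraic heart and rests on Gauss's multiplication theorem, while part (i) is an asymptotic calculation using the Stirling bound already established in Lemma \ref{lemgam}.

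For part (ii), I would first apply Gauss's multiplication theorem
$$\prod_{k=0}^{m-1}\Gamma\!\left(w+\tfrac{k}{m}\right)=(2\pi)^{(m-1)/2}\,m^{1/2-mw}\,\Gamma(mw)$$
with $w=z/(2m)$ to get $\prod_{k=0}^{m-1}\gamma_{m,k}(z)=(2\pi)^{(m-1)/2}m^{1/2-z/2}\Gamma(z/2)$. The same identity with $w=(1-z)/(2m)$ gives $\prod_{k=0}^{m-1}\gamma_{m,k}(1-z)=(2\pi)^{(m-1)/2}m^{z/2}\Gamma((1-z)/2)$. Taking the ratio cancels the prefactor $(2\pi)^{(m-1)/2}$ and produces $m^{1/2-z}\Gamma(z/2)/\Gamma((1-z)/2)$; using the definition \eqref{archimfact} to replace $\Gamma(z/2)/\Gamma((1-z)/2)=\pi^{z-1/2}\rho_\infty(z)$ yields \eqref{uinfactor2}. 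The unimodularity on the critical line follows from the reflection principle: the functional equation $\phi_{m,k}(1-z)=1/\phi_{m,k}(z)$ combined with $\overline{\phi_{m,k}(z)}=\phi_{m,k}(\overline z)$ (valid since $\gamma_{m,k}$ is real on the real axis where it is finite) forces $|\phi_{m,k}(1/2+is)|^2=1$.

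For part (i), I would plug into Lemma \ref{lemgam} directly. The numerator $\gamma_{m,k}(it)=\Gamma(k/m+it/(2m))$ sits on the vertical line $L_{k/m}$, so
$$\bigl|\Gamma(k/m+it/(2m))\bigr|\sim\exp\!\left((k/m-1/2)\log|t/(2m)|-\pi|t|/(4m)+O(1)\right),$$
and the denominator $\gamma_{m,k}(1-it)=\Gamma(1/(2m)+k/m-it/(2m))$ sits on $L_{k/m+1/(2m)}$ giving
$$\bigl|\Gamma(k/m+1/(2m)-it/(2m))\bigr|\sim\exp\!\left((k/m+1/(2m)-1/2)\log|t/(2m)|-\pi|t|/(4m)+O(1)\right).$$
The two linear-in-$|t|$ terms $-\pi|t|/(4m)$ cancel in the ratio, and the logarithmic terms combine to $-\log|t/(2m)|/(2m)+O(1)$, giving $|\phi_{m,k}(it)|=O(|t|^{-1/(2m)})$ as required.

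The only subtle point is making sure Lemma \ref{lemgam} applies at $k=0$, where the numerator $\gamma_{m,0}(it)=\Gamma(it/(2m))$ sits on the line $L_0$; the remark following the statement of that lemma (using $\Gamma(z)=\Gamma(z+1)/z$ to reduce to $a=1$) covers exactly this case, contributing only an extra $|t|^{-1}$ factor which is harmless once the Stirling cancellation is carried out. No further input is needed: parts (i) and (ii) together are the precise ingredients used in \S\ref{sectp} to redistribute $\rho_\infty$ among the non-archimedean factors $\rho_p$ so that each Cauchy-residue contribution of a pole $2\pi in/\log p$ acquires a damping factor $|t|^{-1/(2m)}$ sufficient to overpower the $O(|t|^{-1/2})$ growth of $\rho_\infty$ encountered in Theorem \ref{thmquasiinner}.
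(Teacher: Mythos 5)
Your proof is correct and follows essentially the same route as the paper: part~(ii) applies Gauss's multiplication theorem with $w=z/(2m)$ and $w=(1-z)/(2m)$ and derives unimodularity on $\partial\C_-$ from the reality relation $\overline{\phi_{m,k}(z)}=\phi_{m,k}(\bar z)$ together with $\phi_{m,k}(z)\phi_{m,k}(1-z)=1$; part~(i) plugs $a=k/m$ and $b=k/m+1/(2m)$ into the Stirling estimate of Lemma~\ref{lemgam}, with the $-\frac{\pi}{2}|t/(2m)|$ terms cancelling and the log terms leaving $-\frac{1}{2m}\log|t|$. The aside about $k=0$ is harmless but unnecessary, since Lemma~\ref{lemgam} is already stated for all $a\geq 0$ (the reduction via $\Gamma(z)=\Gamma(z+1)/z$ is internal to its proof, not a caveat the user of the lemma must revisit).
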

\begin{proof} (i)~We apply \eqref{uinfactor3}, for $a=\frac km$,   and get
\[
\vert \gamma_{m,k}(it)\vert=\left\vert \Gamma(a+\frac{it}{2m})\right\vert\sim \exp(\sigma_a(\frac{t}{2m})).
\]
In a similar way, using $b=\frac{1}{2m}+\frac km$ we obtain 
	\[
	\vert \gamma_{m,k}(1-it)\vert=\left\vert \Gamma(b-\frac{it}{2m})\right\vert \sim \exp(\sigma_b(\frac{-t}{2m}))\sim \exp(\sigma_b(\frac{t}{2m})).
		\]
	From \eqref{uinfactor3} one has 
	\[
	\sigma_a\left(\frac{t}{2m}\right)=(a-\frac 12)\, \log \left\vert \frac{t}{2m}\right\vert-\frac \pi 2\, \left\vert \frac{t}{2m}\right\vert+O(1)
\]
and
\[
	\sigma_{a}\left(\frac{t}{2m}\right)-\sigma_{b}\left(\frac{t}{2m}\right)=-\frac{1}{2m}\log \left\vert \frac{t}{2m}\right\vert+O(1)
	\]
	which thus gives $\vert \phi_{m,k}(it)\vert=O(\vert t\vert^{-\frac{1}{2m}})$ when $\vert t\vert\to \infty$.\newline
	(ii)~We use Gauss multiplication theorem in the form
	\[
	\prod_{k=0}^{m-1} \Gamma\left(z+\frac km\right)=\left(2\pi\right)^{\frac{m-1}{2}}m^{\frac 12-mz}\Gamma(mz).
	\]
	Replacing $z$ by $\frac {z}{2m}$ we get
	\[
	\prod_0^{m-1}\gamma_{m,k}(z)=(2\pi)^{\frac{m-1}{2}}m^{\frac {1-z}{2}}\Gamma\left(\frac z2\right)
	\]
	and taking the ratio with the value at $1-z$ one obtains 
	\[
	\prod_0^{m-1}\phi_{m,k}(z)= \frac{m^{\frac {1-z}{2}}\Gamma\left(\frac z2\right)}{m^{\frac {z}{2}}\Gamma(\frac {1-z}{2})}=\left(\frac m\pi\right)^{\frac 12-z}\rho_\infty(z).
	\]
	Note finally that the functions $\phi_{m,k}$ fulfill the reality condition $\phi_{m,k}(\bar z)=\overline{\phi_{m,k}(z)}$ while by construction one has \[
	\phi_{m,k}(1-z)\phi_{m,k}(z)=1
	\]
	which shows that $\vert \phi_{m,k}(\frac 12+is)\vert=1$ for $s\in \R$.\end{proof}
	
	\begin{lem}\label{uinftyfactorm} Let $m\in \N$ and for any  $k\in \{0,m-1\}$ let
	\begin{equation}\label{uinfactor4}
	\rho_{\infty}^{(m,k)}(z):= \left(\frac \pi m\right)^{\frac{1}{2m}-\frac z m}\phi_{m,k}(z)
	\end{equation}
	Each $\rho_{\infty}^{(m,k)}$ is a quasi inner function (relative to $\C_-$) and their product is equal to $\rho_\infty(z)$.
	\end{lem}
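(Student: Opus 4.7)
The plan is to reduce to a Cauchy--residue computation in direct analogy with Section~\ref{sectomegainfty}. The product identity $\prod_{k=0}^{m-1}\rho_\infty^{(m,k)}=\rho_\infty$ is immediate from Lemma~\ref{uinftyfactor}(ii): multiplying that formula by $\prod_{k=0}^{m-1}(\pi/m)^{1/(2m)-z/m}=(\pi/m)^{1/2-z}$ cancels the $(m/\pi)^{1/2-z}$ factor. For the unimodularity on the critical line, observe that at $z=\tfrac12+is$ the exponent $\tfrac{1}{2m}-\tfrac{z}{m}$ is purely imaginary, so $|(\pi/m)^{1/(2m)-z/m}|=1$; combining this with $|\phi_{m,k}(\tfrac12+is)|=1$ from Lemma~\ref{uinftyfactor}(ii) gives $|\rho_\infty^{(m,k)}|=1$ on $\partial\C_-$.

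For the quasi-inner property, I would compute the negative Fourier coefficients $a^{(m,k)}_{-j}$ of $\kappa_{m,k}:=\rho_\infty^{(m,k)}\circ\psi$ by Cauchy's theorem along a contour $C_{R,M}$ as in Figure~\ref{cauchy1} (now with left cut at $\Re(z)=\tfrac12-2mM$). The poles of $\rho_\infty^{(m,k)}$ in $\C_-$ are the simple poles of $\gamma_{m,k}(z)=\Gamma(z/(2m)+k/m)$, located at $z=-2k-2mn$ for $n\geq 0$. Granting that the contributions of the horizontal and left-vertical segments vanish (see below), the residue theorem yields
\[
a^{(m,k)}_{-j}=\sum_{n\geq 0}\alpha(n)\,x(n)^{j-1},\qquad x(n)=\psi^{-1}(-2k-2mn),
\]
with $1-x(n)=4/(4k+4mn+3)\sim 1/(mn)$ and $\alpha(n)$ proportional to $(-1)^n(\pi/m)^{2n}/\bigl(n!\,\Gamma(n+2k/m+1/(2m))(4k+4mn+3)^2\bigr)$, which decays super-exponentially in $n$. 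The final rapid-decay argument in the proof of Theorem~\ref{thmquasiinner0} then applies verbatim: Lemma~\ref{uinftyqi1} rewrites $(1-\prr)\kappa_{m,k}\prr=\sum_n\alpha(n)\,|\xi_{x(n)}\rangle\langle\eta_{x(n)}|$, and the rapid decay of $\alpha(n)$ makes this operator an infinitesimal of infinite order, proving quasi-innerness.

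The main obstacle is controlling the boundary segments of $C_{R,M}$, because the constant prefactor $(\pi/m)^{1/(2m)-z/m}$ has modulus $(\pi/m)^{1/(2m)-\Re(z)/m}$, which blows up as $\Re(z)\to-\infty$ whenever $m\leq 3$. On the horizontal segments at height $\pm R$, Stirling's formula (cf.\ Lemma~\ref{lemgam}) yields $|\phi_{m,k}(x+iR)|\sim (R/(2m))^{(2x-1)/(2m)}\leq 1$ for $x\leq\tfrac12$, while $|(\pi/m)^{1/(2m)-z/m}|$ is uniformly bounded by $(\pi/m)^{2M}$ on the segment, so the factor $(2z-3)^{-2}$ forces these integrals to vanish as $R\to\infty$. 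For the vertical segment $V=\{\Re(z)=\tfrac12-2mM\}$, I would iterate the functional equation
\[
\phi_{m,k}(z+2m)=\phi_{m,k}(z)\cdot\frac{(z+2k)(1-z+2k-2m)}{(2m)^2}
\]
(a direct consequence of $\Gamma(w+1)=w\Gamma(w)$) $M$ times; using $|\phi_{m,k}|=1$ on the critical line this bounds $|\phi_{m,k}|$ on $V$ by $(2m)^{2M}$ divided by a product whose modulus grows like $(M!)^2$. Multiplying by the $(\pi/m)^{2M}$ from the prefactor still leaves a bound of order $(\pi/m)^{2M}/(M!)^2$, a super-exponentially small $\epsilon(M)$; this is precisely what neutralizes the potentially growing prefactor and justifies dropping the $V$-contribution, completing the reduction to the residue sum above.
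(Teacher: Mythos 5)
Your proof is correct and matches the paper's intended route: the paper's own proof of Lemma~\ref{uinftyfactorm} merely notes that the product formula and unimodularity follow from Lemma~\ref{uinftyfactor}, records the pole locations $z=-2k-2nm$, observes that the residues decay extremely fast, and then asserts that ``the results of Section~\ref{sectomegainfty} continue to hold with minor changes.'' You have worked through those ``minor changes'' explicitly, and in doing so you correctly flagged the one place where the argument is more delicate than for $\rho_\infty$: the prefactor $(\pi/m)^{1/(2m)-z/m}$ grows like $(\pi/m)^{2M}$ on the contour's left edge whenever $m\leq 3$, so the crude $\vert\phi_{m,k}\vert\leq 1$ bound used for $\rho_\infty$ in Section~\ref{sectomegainfty} no longer suffices by itself. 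Your iterated functional equation $\phi_{m,k}(z+2m)=\phi_{m,k}(z)\,(z+2k)(1-z+2k-2m)/(2m)^2$, yielding a bound of order $(2m)^{2M}/(M!)^2$ on the left segment and hence a net bound of $(\pi/m)^{2M}/(M!)^2$ after the prefactor, is precisely what makes the contribution of that segment vanish; this detail is left implicit in the paper and is a genuine contribution of your write-up.
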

	\begin{proof} It follows from Lemma \ref{uinftyfactor} that the product of the $\rho_{\infty}^{(m,k)}$ is equal to $\rho_\infty$ and that each has absolute value $1$ on $\partial\C_-$. The poles of $\rho_{\infty}^{(m,k)}$ are those of $\gamma_{m,k}(z)=\Gamma\left(\frac{z}{2m}+\frac{k}{m}\right)$ and form the arithmetic progression $z=-2k-2nm$. The residues at these poles decay extremely fast to $0$ as in the case of  $\rho_\infty(z)$. Thus the results of Section \ref{sectomegainfty} continue to hold with minor changes when one replaces $\rho_{\infty}$ by $\rho_{\infty}^{(m,k)}$. \end{proof} 

\subsection{The product  $\rho_\infty\prod \rho_p$}\label{sectp}

 We are now ready to prove the following strengthening of Theorem \ref{thmmain}.
 
 \begin{thm} \label{thmmainbis} The product $\rho_\infty\prod \rho_p$ of $m+1$ ratios of local $L$-factors $\rho_v(z)=\gamma_v(z)/\gamma_v(1-z)$ over a finite set of places of $\Q$ containing the archimedean place is a quasi-inner function relative to $\C_-=\{z\in\C\mid \Re(z)\leq \frac 12\}$. The off diagonal part $(1-\prr   )\rho_\infty\prod \rho_p \prr   $ is an infinitesimal of order $\frac{1}{2m}$.
\end{thm}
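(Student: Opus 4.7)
The plan is to exploit the Gauss-multiplication factorization $\rho_\infty=\prod_{k=0}^{m-1}\rho_\infty^{(m,k)}$ of Lemma \ref{uinftyfactorm}, choosing the number of factors to equal the number $m$ of primes in $F$. Enumerating these primes as $p_0,\ldots,p_{m-1}$ and pairing $\rho_\infty^{(m,k)}$ with $\rho_{p_k}$, the full product regroups as $\prod_{k=0}^{m-1}(\rho_\infty^{(m,k)}\rho_{p_k})$. Since the product of quasi-inner functions is quasi-inner, it suffices to prove that each single pair $\rho_\infty^{(m,k)}\rho_{p_k}$ is quasi-inner with off-diagonal part an infinitesimal of order $\frac{1}{2m}$; the order of the off-diagonal part of the full product is then preserved under iteration of the matrix identity $(uv)_{21}=u_{21}v_{11}+u_{22}v_{21}$, since $v_{11}$ and $u_{22}$ are bounded.

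For a single pair $\rho_\infty^{(m,k)}\rho_p$ I would rerun the Cauchy-residue computation of Theorem \ref{thmquasiinner}. Composing with $\psi$ and integrating $\rho_\infty^{(m,k)}(z)\rho_p(z)\psi^{-1}(z)^{j-1}\,d\psi^{-1}(z)$ along the contour $C_{R,m'}$ of Figure \ref{cauchy1} with $R=\frac{(2m'+1)\pi}{\log p}$, as in Lemma \ref{functionf2}, collects three groups of poles: the arithmetic progression $-2k-2nm$ of poles of $\rho_\infty^{(m,k)}$, whose residues decay faster than any polynomial and thus produce an off-diagonal contribution which is an infinitesimal of infinite order; the double pole at $z=0$, giving a finite-rank contribution as in Theorem \ref{thmquasiinner}; and the poles $\frac{2\pi in}{\log p}$ of $\rho_p$, which by the derivation behind \eqref{uinftypoff2} yield an operator of the form
\[
\frac{1-p}{p}\,U_-V D_{m,k} I V^* U_+^*,\qquad D_{m,k}\dirac_n=\rho_\infty^{(m,k)}\!\left(\tfrac{2\pi in}{\log p}\right)\dirac_n\ (n\neq 0),\ D_{m,k}\dirac_0=0.
\]

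The decisive input is the decay estimate $|\rho_\infty^{(m,k)}(it)|=O(|t|^{-1/(2m)})$ from Lemma \ref{uinftyfactor}(i), noting that the prefactor $(\pi/m)^{1/(2m)-z/m}$ has constant modulus on the imaginary axis. This makes $D_{m,k}$ an infinitesimal of order $\frac{1}{2m}$ on $\ell^2(\Z)$, hence so is the whole $\rho_p$-pole contribution because $V$, $I$ and $U_\pm$ are bounded. Summing the three contributions shows that the off-diagonal part of $\rho_\infty^{(m,k)}\rho_p$ is compact and of order $\frac{1}{2m}$, proving quasi-innerness of the pair with the expected order. Combining all $m$ pairs then yields the theorem.

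The main obstacle I expect is the analytic verification that the contour argument genuinely closes, uniformly in $k\in\{0,\ldots,m-1\}$, for the weaker decay exponent $\frac{1}{2m}$. One must bound $|\rho_\infty^{(m,k)}(z)|$ adequately on the horizontal pieces of $C_{R,m'}$ so that those integrals vanish in the limit; this likely requires a H\"older-type estimate in the spirit of Proposition \ref{propsum1} rather than the super-exponential bounds available in Sections \ref{sectomegainfty}--\ref{sectonep}. Once this uniform control is in place, the Cauchy computation and the algebraic packaging above are a mechanical extension of the single-prime case.
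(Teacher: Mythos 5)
Your proposal follows essentially the same route as the paper's proof: factor $\rho_\infty$ into $m$ factors $\rho_\infty^{(m,k)}$ by Gauss multiplication (Lemma \ref{uinftyfactorm}), pair each with one $\rho_p$, rerun the residue computation of Theorem \ref{thmquasiinner} for each pair using the decay bound of Lemma \ref{uinftyfactor}(i) to get an infinitesimal of order $\frac{1}{2m}$, and then propagate the order through the product via the triangular matrix identity. Your worry about the contour closing under the weaker decay is unnecessary: that step depends only on the behaviour of $\rho_\infty^{(m,k)}$ far in the left half-plane (Stirling still gives super-polynomial decay there), not on the $O(|t|^{-1/(2m)})$ rate on the imaginary axis, which only enters at the poles of $\rho_p$.
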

\begin{proof} The results of \S \ref{sectonep} continue to hold with minor changes if one replaces $\rho_{\infty}$ by $\rho_{\infty}^{(m,k)}$. The only substantial change is that the decay of the terms $\vert \rho_\infty^{(m,k)}(\frac{2\pi i n}{\log p})\vert$ is now governed by Lemma \ref{uinftyfactor} (i), and hence is $O(n^{-\frac{1}{2m}})$. This shows that each of the terms of the form $\rho_{\infty}^{(m,k)}\rho_p$ is a quasi inner function and that $(\rho_{\infty}^{(m,k)}\rho_p)_{21}=(1-\prr   )\rho_{\infty}^{(m,k)}\rho_p \prr   $ is an infinitesimal of order $\frac{1}{2m}$. The formula for the off diagonal entry $(\bullet)_{21}$ of the product of the matrices associated to the $\rho_{\infty}^{(m,k)}\rho_p$ shows that one obtains a sum of products in which at least one of the terms is an  $(\rho_{\infty}^{(m,k)}\rho_p)_{21}$. Since the other terms in the product are bounded it follows that $(1-\prr   )\rho_\infty\prod \rho_p \prr   $ is an infinitesimal of order $\frac{1}{2m}$.
\end{proof} 

\section{Quasi-inner functions and Sonin's space}\label{quasi-inner sonin}
 The local definition of Sonin's space is
 \begin{defn}\label{soninp} Let $\K$ be a local field and $\alpha$ an additive character of $\K$. The Sonin space of $(\K,\alpha)$ is the  subspace of the $L^2$-space of square integrable functions on $K$ defined as follows \[
S(\K,\alpha):= \{ f \in L^2(\K)\mid f(x)=0 \ \& \ \fourier_\alpha  f(x)=0\quad \forall x, \vert x\vert <1\}
\]
where $\F_\alpha$ denotes the Fourier transform with respect to $\alpha$.
\end{defn}
For $\K=\Q_p$ the local non archimedean field  of $p$-adic numbers one can show that the $\Z_p^*$-invariant part of $S(\Q_p,e_p)$ (where $e_p$ is the standard additive character) is  one-dimensional.  This is in sharp contrast with the archimedean case $\K=\R$ where Sonin's space is infinite dimensional and it parallels the fact that while $\rho_\infty$ is quasi-inner   none  of the $\rho_p$ is so.

In  Proposition \ref{uinftysonin} we show that for $\K=\R$  and $\alpha=e_\R$ the classical Sonin's space $S(\K,\alpha)$ is isomorphic to the kernel of the operator $(1-\prr   )\rho_\infty(1-\prr   )=\left(\rho_\infty\right)_{22}$ where, as above,  $\rho_\infty$ is the ratio of local archimedean factors and is a quasi-inner function. 
We then adopt the following 
\begin{defn}\label{defnquasiinnersonin} Let $\Omega\subset \C$ be an open disk or a half plane  and   $u\in L^\infty(\partial \Omega)$ a quasi-inner function. The  Sonin space $S(u)$ is the kernel of the operator $(1-\prr   )u(1-\prr   )=\left(u\right)_{22}$ where $\prr$ is the orthogonal projection of  $L^2(\partial \Omega)$ on the Hardy space $H^2( \Omega)$ and $u$ acts in $L^2(\partial \Omega)$ by multiplication.	
\end{defn}
We apply this definition   to $\C_-=\{z\mid \Re(z)\leq \frac 12\}$.
The main result of this section is the following theorem:
\begin{thm}\label{mainsonin} $(i)$~Let $F$ be a finite set of places of $\Q$ containing the archimedean place,  $u(F)=\prod_F \rho_v$ the associated product of ratios of local factors over $F$. Then the Sonin space $S(u(F))$ is infinite dimensional.\newline
$(ii)$~Let $F\subsetneq F'$ with $F,F'$ as in $(i)$. The multiplication by $D(F,F')=\prod_{p\in F'\setminus F} (1-p^{-z})$ defines an injective linear map $S(u(F))\to S(u(F'))$. 
\end{thm}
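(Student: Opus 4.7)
The plan is to establish $(ii)$ first and then derive $(i)$ by combining $(ii)$ with Proposition \ref{uinftysonin} and the known infinite dimensionality of the classical archimedean Sonin space $S(\R,e_\R)$.

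The key algebraic fact underlying $(ii)$ is the cancellation
\[
u(F')\,D(F,F') \;=\; u(F)\,g(F,F'), \qquad g(F,F') := \prod_{p\in F'\setminus F}\bigl(1-p^{z-1}\bigr),
\]
obtained by multiplying $u(F)\prod_{p\in F'\setminus F}\rho_p(z)$ by $\prod_{p\in F'\setminus F}(1-p^{-z})$ so as to clear the denominators in each $\rho_p$. Since $|p^{z-1}|\leq p^{-1/2}<1$ for $\Re(z)\leq \tfrac12$, the function $g(F,F')$ belongs to $H^\infty(\C_-)$, and since $|p^{-z}|\leq p^{-1/2}<1$ for $\Re(z)>\tfrac12$, the function $D(F,F')$ extends to a bounded holomorphic function on the complement of $\C_-$. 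These two observations drive the argument.

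To carry out $(ii)$, take $\xi$ in the Sonin space $S(u(F))\subset (1-\prr)L^2(\partial\C_-)$, so that $u(F)\xi\in H^2(\C_-)$. The first step is to check $D(F,F')\xi\in (1-\prr)L^2$, which follows because $D(F,F')$ is the boundary value of an $H^\infty$ function on the complement half-plane, and such multipliers preserve $H^2(\C_-)^\perp$. Next, all multiplication operators on $L^2(\partial\C_-)$ commute, so the identity above yields
\[
u(F')\,D(F,F')\,\xi \;=\; g(F,F')\cdot u(F)\xi.
\]
Since $u(F)\xi\in H^2(\C_-)$ and $g(F,F')\in H^\infty(\C_-)$, the right-hand side lies in $H^2(\C_-)$, which shows $D(F,F')\xi\in S(u(F'))$. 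For injectivity, note that on the critical line $|1-p^{-z}|\geq 1-p^{-1/2}>0$, so $D(F,F')$ is a nowhere-vanishing continuous function on $\partial\C_-$, and multiplication by it is injective on $L^2(\partial\C_-)$.

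For $(i)$, the base case $F=\{\infty\}$ is handled by Proposition \ref{uinftysonin}, which identifies $S(\rho_\infty)$ with the classical Sonin space $S(\R,e_\R)$; the latter is well known to be infinite dimensional, in sharp contrast with the one-dimensional $\Z_p^*$-invariant $p$-adic case recalled in the introduction to this section. For the general case $\{\infty\}\subsetneq F$, apply $(ii)$ to the pair $\{\infty\}\subset F$: the resulting injection $S(\rho_\infty)\hookrightarrow S(u(F))$ transfers infinite dimensionality. The main point where care is required is not delicate analysis but keeping track of which functions are multipliers on which side of the critical line, in order to justify both that $D(F,F')$ preserves $(1-\prr)L^2$ and that $g(F,F')$ preserves $H^2(\C_-)$; once the algebraic factorization above is noticed, the rest is bookkeeping.
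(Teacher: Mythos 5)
Your argument is correct and follows the same route as the paper: you use the same factorization $u(F')D(F,F') = N_{F''}\,u(F)$ (with $N_{F''} = g(F,F')$ in your notation), the same observation that $N_{F''}\in H^\infty(\C_-)$ and $D_{F''}\in H^\infty(\C_+)$ (the paper's Lemma \ref{compat}), and the same multiplier argument to show $D(F,F')\xi \in S(u(F'))$. The only difference is that you spell out two points the paper leaves implicit: that injectivity follows from $D(F,F')$ being bounded away from zero on the critical line, and that $(i)$ is obtained by applying $(ii)$ to the pair $\{\infty\}\subset F$ together with Proposition \ref{uinftysonin} and the infinite dimensionality of the classical archimedean Sonin space $S(1,1)$.
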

It follows that the Sonin spaces $S(u(F))$ form a filtering inductive system under the maps  $D(F,F')$. The proof of Theorem \ref{mainsonin} is given in \S \ref{pfthm}. In \S \ref{triangunit} we explain why the structure of triangular unitaries $U=\left(
\begin{array}{cc}
 u_{1,1} & u_{1,2} \\
 0 & u_{2,2} \\
\end{array}
\right)$ hinges on the kernel of $ u_{2,2}$. In \S \ref{sectcompp} we prove that for $u=\rho_\infty$ this kernel is given by Sonin's space of $\R$. Finally in \S \ref{sectprod} we write the   product $\prod \rho_p$ as a ratio $N/D$ and show that both $N$ and $D$ belong to $H^{\infty}$ of half-planes with boundary the critical line.

\subsection{Triangular unitaries} \label{triangunit} 

The definition of quasi inner functions $u$ implies that when working modulo compact operators \ie in the Calkin algebra the unitary associated to $u$ is triangular in the decomposition as a  matrix using the projections $\prr$ and $1-\prr$. Triangular unitaries have a simple form as shown by the elementary 
\begin{prop}\label{triangu} Let $U=\left(
\begin{array}{cc}
 u_{1,1} & u_{1,2} \\
 0 & u_{2,2} \\
\end{array}
\right)$ be a triangular matrix of operators,  then $U$ is unitary if and only if the following conditions hold
\begin{enumerate}
\item 	$u_{1,1}$ is an isometry.
\item  $u_{2,2}$ is a coisometry.
\item $u_{1,2}$ is a partial isometry from the kernel of $u_{2,2}$ to the cokernel of $u_{1,1}$.
\end{enumerate}	
\end{prop}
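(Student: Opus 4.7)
The plan is to carry out the block matrix computation explicitly. I will write out both products
\[
U^*U=\begin{pmatrix} u_{1,1}^*u_{1,1} & u_{1,1}^*u_{1,2} \\ u_{1,2}^*u_{1,1} & u_{1,2}^*u_{1,2}+u_{2,2}^*u_{2,2} \end{pmatrix},\qquad
UU^*=\begin{pmatrix} u_{1,1}u_{1,1}^*+u_{1,2}u_{1,2}^* & u_{1,2}u_{2,2}^* \\ u_{2,2}u_{1,2}^* & u_{2,2}u_{2,2}^* \end{pmatrix}
\]
and read off the six independent scalar-block equations coming from $U^*U=I$ and $UU^*=I$. The diagonal equations $u_{1,1}^*u_{1,1}=I$ and $u_{2,2}u_{2,2}^*=I$ immediately give conditions (1) and (2).

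For condition (3), I would first use the remaining two diagonal equations to identify the relevant projections. From $UU^*=I$ I get $u_{1,2}u_{1,2}^*=I-u_{1,1}u_{1,1}^*$, which is the orthogonal projection onto $(\mathrm{range}\,u_{1,1})^\perp=\mathrm{coker}\,u_{1,1}$; symmetrically, $U^*U=I$ yields $u_{1,2}^*u_{1,2}=I-u_{2,2}^*u_{2,2}$, the projection onto $(\mathrm{range}\,u_{2,2}^*)^\perp=\ker u_{2,2}$. Since $u_{1,2}^*u_{1,2}$ and $u_{1,2}u_{1,2}^*$ are both projections, $u_{1,2}$ is a partial isometry with initial space $\ker u_{2,2}$ and final space $\mathrm{coker}\,u_{1,1}$, giving (3).

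For the converse I would check that (1), (2), (3) force the two off-diagonal equations $u_{1,1}^*u_{1,2}=0$ and $u_{1,2}u_{2,2}^*=0$ to hold automatically: by (3) the range of $u_{1,2}$ lies in $\mathrm{coker}\,u_{1,1}=\ker u_{1,1}^*$, which kills the first; and $u_{1,2}$ vanishes on $(\ker u_{2,2})^\perp=\overline{\mathrm{range}\,u_{2,2}^*}$, which kills the second. Combined with the two identities $u_{1,2}^*u_{1,2}+u_{2,2}^*u_{2,2}=I$ and $u_{1,1}u_{1,1}^*+u_{1,2}u_{1,2}^*=I$ that follow from (1), (2), (3), this recovers $U^*U=UU^*=I$.

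There is no substantial obstacle here; the result is a purely algebraic dihedral unpacking of the unitarity identity. The only mildly delicate point is being careful with the identification $\ker u_{1,1}^*=\mathrm{coker}\,u_{1,1}=(\mathrm{range}\,u_{1,1})^\perp$ (which is automatic because $u_{1,1}$ is an isometry, so in particular has closed range), and the analogous statement for $u_{2,2}^*$; once these are in place, the four projection identities encode exactly the partial-isometry content of (3).
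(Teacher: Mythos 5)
Your proof is correct and follows essentially the same approach as the paper: both compute the block products $U^*U$ and $UU^*$, read off the six operator identities, and interpret them via the isometry/coisometry/partial-isometry conditions. The only difference is that you spell out the last step (identifying $u_{1,2}^*u_{1,2}$ and $u_{1,2}u_{1,2}^*$ as the projections onto $\ker u_{2,2}$ and $\mathrm{coker}\,u_{1,1}$, and verifying the converse implication) which the paper leaves implicit with the phrase ``This holds if and only if 1.--3.\ are all satisfied.''
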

\begin{proof} One has 
\[
UU^*=\left(
\begin{array}{cc}
 u_{1,1} & u_{1,2} \\
 0 & u_{2,2} \\
\end{array}
\right).\left(
\begin{array}{cc}
 u^*_{1,1} & 0 \\
 u^*_{1,2} & u^*_{2,2} \\
\end{array}
\right)=\left(
\begin{array}{cc}
 u_{1,1} u^*_{1,1}+u_{1,2} u^*_{1,2} & u_{1,2} u^*_{2,2} \\
u_{2,2} u^*_{1,2} & u_{2,2} u^*_{2,2} \\
\end{array}
\right)
\]
\[
U^*U=\left(
\begin{array}{cc}
 u^*_{1,1} & 0 \\
 u^*_{1,2} & u^*_{2,2} \\
\end{array}
\right).\left(
\begin{array}{cc}
 u_{1,1} & u_{1,2} \\
 0 & u_{2,2} \\
\end{array}
\right)=\left(
\begin{array}{cc}
  u^*_{1,1}u_{1,1}&  u^*_{1,1}u_{1,2} \\
 u^*_{1,2} u_{1,1}  &  u^*_{1,2} u_{1,2}+u^*_{2,2} u_{2,2}  \\
\end{array}
\right)
\]
so  $U$ is unitary if and only if 
\begin{align*}
u^*_{1,1}u_{1,1}=1, \quad u_{2,2} u^*_{2,2}&=1, \quad  u_{1,2} u^*_{1,2}=1-u_{1,1} u^*_{1,1}, \quad u_{1,2} u^*_{2,2}=0, \\ u^*_{1,1}u_{1,2}&=0, \quad  u^*_{1,2} u_{1,2}=1-u^*_{2,2} u_{2,2}.
\end{align*}
This holds if and only if 1.-3.  are all satisfied. 
\end{proof} 

Proposition \ref{triangu} shows that, unless the triangular unitary $U$ is diagonal, the kernel of $u_{2,2}$ is non trivial.

\subsection{Sonin's space and $S(\rho_\infty)$}\label{sectcompp} Next we  determine the  kernel   of $u_{2,2}$ for $u=\rho_\infty$ even though $\rho_\infty$ is only quasi-inner and not inner. 
We use the notations of \cite{Weilcompo}, \ie we let $L^2(\R)_{\rm ev}$ be the Hilbert space  of square integrable even functions on $\R$ and $S(1,1)\subset L^2(\R)_{\rm ev}$ be   Sonin's space  of even  functions, which, together with their Fourier transform, vanish identically in the interval $[-1,1]$. We use the unitary isomorphism 
\begin{equation}\label{isow}
 	 w:L^2(\R)_{\rm ev}\to L^2(\R_+^*,d^*\lambda), \qquad (w\xi)(\lambda):=\lambda^{\frac 12}\xi(\lambda)
 \end{equation}
and the  Fourier transform 
\begin{equation}\label{PhiFourier}
\fourier_\mu   :
L^2(\R^*_+,d^*\lambda)\to L^2(\R),\qquad \fourier_\mu   (f)(s):=\int_0^{\infty} f(v) v^{-is}d^*v\,.
\end{equation}
Under the identification of $\R$ with the critical line $\partial \C_-$ given by $s\mapsto \frac 12 +is$, the unitary function $u_\infty$ of \cite{Weilcompo} is the restriction of 
$\rho_\infty$ to $\partial \C_-$, \ie  $\rho_\infty(\frac 12 +is)=u_\infty(s)$ $\forall s\in \R$.

\begin{prop}\label{uinftysonin} The image $(\fourier_\mu\circ  w) (S(1,1))$ of Sonin's space is the kernel of the operator  $(1-\prr   )\rho_\infty(1-\prr   )=\left(\rho_\infty\right)_{22}$.	
\end{prop}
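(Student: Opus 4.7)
The plan is to translate each of the two support conditions defining $S(1,1)$ into a statement about $F := (\fourier_\mu \circ w)(\xi) \in L^2(\partial \C_-)$, and then identify their conjunction with the kernel of $(\rho_\infty)_{22}$. To set up the Hardy-space dichotomy on the Mellin side, first observe that under the change of variables $u=\log\lambda$, $\fourier_\mu$ reduces to the ordinary Fourier transform, so by Paley--Wiener it carries the subspace of $L^2(\R_+^*,d^*\lambda)$ consisting of functions vanishing on $(0,1]$ isometrically onto $(1-\prr)L^2(\partial\C_-)=H^2(\C_+)$ (noting that under $s=\tfrac12+it$, $\Im(t)>0$ corresponds to $\Re(s)<\tfrac12$, so $H^2(\C_-)$ is the $H^2$ of the upper $t$-half-plane). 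Since $(w\xi)(\lambda)=\lambda^{1/2}\xi(\lambda)$, the first Sonin condition $\xi\equiv 0$ on $[-1,1]$ is equivalent to $F\in(1-\prr)L^2$.

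Next I would establish the Mellin-side intertwining relation for the Fourier transform on even functions. The classical cosine-transform identity $\int_0^\infty\cos(u)u^{s-1}\,du=\Gamma(s)\cos(\pi s/2)$, combined with \eqref{archimfact2}, yields $M\widehat\xi(s)=\rho_\infty(s)\,M\xi(1-s)$ for Schwartz even $\xi$, where $M$ denotes the Mellin transform. Setting $s=\tfrac12-it$ and writing $u_\infty(t):=\rho_\infty(\tfrac12+it)$, this becomes
\[
(\fourier_\mu w\widehat\xi)(t)=u_\infty(-t)\,(\fourier_\mu w\xi)(-t)=J(u_\infty F)(t),
\]
where the reflection $(JG)(t):=G(-t)$ satisfies $J\prr J=1-\prr$ since it swaps the upper and lower $t$-half-planes and hence the Hardy spaces $H^2(\C_-)$ and $H^2(\C_+)$ (no compensating rank-one term is needed here, in contrast to the disk case, because the Hardy space on the line contains no constant).

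Combining the two, the second Sonin condition $\widehat\xi\equiv 0$ on $[-1,1]$ translates, by the Paley--Wiener argument of the first step applied now to $\widehat\xi$, into $J(u_\infty F)\in(1-\prr)L^2$; using $J(1-\prr)L^2=\prr L^2$ this is equivalent to $u_\infty F\in \prr L^2$, i.e.\ $(1-\prr)u_\infty F=0$. Together with $F=(1-\prr)F$ from the first condition, and identifying $\rho_\infty|_{\partial\C_-}$ with multiplication by $u_\infty$, this is exactly $(1-\prr)\rho_\infty(1-\prr)F=0$. Since $\fourier_\mu\circ w$ is a unitary isomorphism, the chain of equivalences identifies the image of $S(1,1)$ with $\ker(\rho_\infty)_{22}$ (viewed on $(1-\prr)L^2$).

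The main subtlety lies not in the Mellin--Fourier identity itself, but in the presence of the reflection $J$: the Fourier transform on even $L^2(\R)$ is \emph{not} simply diagonalised into multiplication by $u_\infty$ under $\fourier_\mu\circ w$, but intertwines with the composition $J\circ u_\infty$. It is precisely the fact that $J$ exchanges $\prr$ with $1-\prr$ on $L^2(\partial\C_-)$ that converts the symmetric second Sonin condition (vanishing on $[-1,1]$) into the asymmetric upper-triangular kernel condition characterising $(\rho_\infty)_{22}$, rather than into a condition involving $(\rho_\infty)_{11}$ or the full multiplication operator.
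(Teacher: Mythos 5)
Your proof is correct and follows essentially the same route as the paper's: both translate the two Sonin support conditions to the Mellin/Hardy side, identify the even Fourier transform with $J\circ u_\infty$ (reflection composed with multiplication by $\rho_\infty$), and use $J\prr J=1-\prr$ to convert the second condition into $(1-\prr)\rho_\infty F=0$. The paper's version is terser, stating directly that $\widehat\cP_1$ becomes $\rho_\infty^*P\rho_\infty$ under the Mellin isomorphism (relying on \cite{Weilcompo} for that factorization of $\fourier_{e_\R}$), whereas you re-derive the intertwining $M\widehat\xi(s)=\rho_\infty(s)M\xi(1-s)$ from the cosine-transform identity and \eqref{archimfact2} — a more self-contained but mathematically equivalent argument.
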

\begin{proof} Sonin's space $S(1,1)$ is the intersection of $P=1-\prr $ with the kernel of $\widehat \cP_1=\fourier_{e_\R}^{-1}\cP_1\fourier_{e_\R}$. The latter  is the same as the kernel of $\rho_\infty^*P\rho_\infty$ which in turns is the kernel of $P\rho_\infty$. Thus Sonin's space is the kernel of $(1-\prr   )\rho_\infty(1-\prr   )=\left(\rho_\infty\right)_{22}$.
\end{proof}

\subsection{The product $\prod \rho_p$ as a ratio $N/D$}\label{sectprod}

Let $F$ be a finite set of primes (nonarchimedean places) and write the product $\prod_F \rho_p$ as  the following ratio  
\begin{equation}\label{ratio}
\prod_F \rho_p=N_F/D_F, \qquad N_F=\prod_F (1-p^{z-1}), \quad D_F=\prod_F (1-p^{-z})
\end{equation}
We let, as above, $\prr$ be the orthogonal projection of $L^2(\partial \C_-)$ on $H^2( \C_-)$ and view $1-\prr$ as the orthogonal projection on $H^2( \C_+)$ where $\C_+$ is the half plane on the right of the critical line.

\begin{lem} \label{compat} Let $F$ be a finite set of primes and $N_F$, $D_F$ 
	as in \eqref{ratio}. Then   
	\[
	N_F\in H^\infty(\C_-), \qquad D_F\in H^\infty(\C_+).
	\]
\end{lem}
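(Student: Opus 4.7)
The proof is essentially a direct boundedness estimate on each factor in its respective half-plane. I would proceed as follows.

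First I would treat $N_F$. Each factor $1-p^{z-1}$ is entire, so $N_F$ is holomorphic on all of $\C$ and in particular on the open half-plane $\{z\in\C\mid \Re(z)<\tfrac12\}$ whose closure is $\C_-$. For $z$ in this open half-plane and any prime $p\geq 2$, one has $|p^{z-1}|=p^{\Re(z)-1}<p^{-1/2}\leq 2^{-1/2}<1$, hence
\[
|1-p^{z-1}|\leq 1+p^{-1/2}\leq 1+2^{-1/2}.
\]
Since $F$ is finite, taking the product over $p\in F$ yields a uniform bound
\[
|N_F(z)|\leq \prod_{p\in F}(1+p^{-1/2})<\infty,\qquad \Re(z)<\tfrac12,
\]
which is exactly the statement $N_F\in H^\infty(\C_-)$.

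The argument for $D_F$ is symmetric. Each factor $1-p^{-z}$ is entire. For $z\in\C_+$, i.e.\ $\Re(z)>\tfrac12$, one has $|p^{-z}|=p^{-\Re(z)}<p^{-1/2}<1$, so
\[
|1-p^{-z}|\leq 1+p^{-1/2},
\]
and again the finite product gives a uniform bound
\[
|D_F(z)|\leq \prod_{p\in F}(1+p^{-1/2})<\infty,\qquad \Re(z)>\tfrac12.
\]
Hence $D_F\in H^\infty(\C_+)$.

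There is no real obstacle: the only content is the observation that the exponent $z-1$ has negative real part throughout $\C_-$ and $-z$ has negative real part throughout $\C_+$, so that each geometric-type factor is bounded away from infinity; finiteness of $F$ then turns the pointwise estimate into membership in $H^\infty$. The point of isolating this lemma in the paper is presumably to exploit that $N_F$ and $D_F$ sit on \emph{opposite} sides of the critical line, which will matter when computing the action of $\prr$ and $1-\prr$ on the factorization $\prod_F\rho_p=N_F/D_F$ in the subsequent arguments about semi-local Sonin spaces.
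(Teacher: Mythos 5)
Your proof is correct and follows the same route as the paper: the paper also reduces to showing each individual factor $1-p^{z-1}$ (resp.\ $1-p^{-z}$) is uniformly bounded and holomorphic on $\C_-$ (resp.\ $\C_+$), and you have simply made the elementary boundedness estimate explicit. No issues.
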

\begin{proof} It is enough to show that for any prime $p$ one has 
\[
	1-p^{z-1}\in H^\infty(\C_-), \qquad 1-p^{-z}\in H^\infty(\C_+)
	\]
and this follows from the uniform boundedness of these analytic functions in $\C_-$ for $1-p^{z-1}$ and in $\C_+$ for  $1-p^{-z}$. \end{proof}

\subsection{Proof of Theorem \ref{mainsonin}}\label{pfthm}

It is enough to prove $(ii)$ \ie let $F\subset F'$ with $F,F'$ as in $(i)$. We show that  multiplication by $D(F,F')=\prod_{p\in F'\setminus F} (1-p^{-z})$ defines an injective linear map $S(u(F))\to S(u(F'))$. Let $F'':=F'\setminus F$.  With the notations of \eqref{ratio} and with  $u(F)=\prod_F \rho_v$, one has
$$
u(F')=u(F)\times \prod_{F''} \rho_p=u(F)\times  N_{F''}/D_{F''}.
$$
Let $\xi \in S(u(F))$, then 
$$
\xi \in (1-\prr)L^2=H^2(\C_+), \qquad u(F)\xi \in \prr L^2=H^2(\C_-).
$$
By Lemma \ref{compat} applied to $F''$ one then obtains
$$
D(F,F')\xi \in H^2(\C_+), 
$$
and 
\[
u(F')D(F,F')\xi=u(F)\times  N_{F''}/D_{F''}\times D_{F''}\xi= N_{F''}u(F)\xi\in H^2(\C_-).
\]\vspace{.03in}

This shows that 
$D(F,F')\xi \in S(u(F'))$.

%\end{proof} 

%% The Appendices part is started with the command \appendix;
%% appendix sections are then done as normal sections
%% \appendix

%% \section{}
%% \label{}

%% If you have bibdatabase file and want bibtex to generate the
%% bibitems, please use
%%
%%  \bibliographystyle{elsarticle-num} 
%%  \bibliography{<your bibdatabase>}

%% else use the following coding to input the bibitems directly in the
%% TeX file.

\end{document}